\documentclass{article}

\usepackage{amsthm, amsmath, amssymb}

\usepackage{amsmath}
\usepackage{amssymb}
\usepackage{amsthm}
\usepackage{setspace}
\usepackage[utf8]{inputenc}
\usepackage{enumerate}
\usepackage{amsmath,xspace,amssymb,mathrsfs}
\usepackage{cjhebrew}

\input xy
\xyoption{all}
\xyoption{2cell}

\UseAllTwocells
\CompileMatrices

\usepackage{}
\usepackage{PageSetup}
\usepackage{Alphabets}
\usepackage{Definitions}
\usepackage{Environments}
\usepackage{TikzSetup}
\usepackage{AuthorInfo}

\theoremstyle{plain} 
\clevertheorem{theorem}{Theorem}{Theorems}
\newtheorem*{theorem*}{Theorem}
\clevertheorem{proposition}{Proposition}{Propositions}
\clevertheorem{lemma}{Lemma}{Lemmatta}
\clevertheorem{corollary}{Corollary}{Corollaries}
\clevertheorem{conjecture}{Conjecture}{Conjectures}
\newtheorem*{conjecture*}{Conjecture}

\theoremstyle{definition} 
\clevertheorem{definition}{Definition}{Definitions}
\newtheorem*{definition*}{Definition}
\clevertheorem{example}{Example}{Examples}
\newtheorem*{example*}{Example}

\clevertheorem{construction}{Construction}{Constructions}
\newtheorem*{construction*}{Construction}

\clevertheorem{visualization}{Visualization}{Visualizations}
\newtheorem*{visualization*}{Visualization}

\newtheorem*{question*}{Question}

\newtheorem*{claim*}{Claim}

\usetikzlibrary{
  decorations.pathmorphing,
  shadows.blur,
  backgrounds,
  calc
}

\title{An Inductive Strategy Towards a Solution to the Generalized Homotopy Hypothesis}
\author{Johnathon Taylor}
\date{}

\begin{document}

\maketitle

\begin{abstract}
Using the theory of distributive series of monads, we construct an $(\infty,0)$-coherator called the \emph{inductive coherator}. The category of models out of the inductive coherator serve as a model for $\infty$-groupoids that possess an underlying globular set. Once we establish the construction for the inductive coherator, we provide the framework for an inductive strategy to prove the Generalized Homotopy Hypothesis obtained by transferring model structure off of the category of $n$-groupoids onto the category of $(n+1)$-groupoids. Moreover, we provide a necessary and sufficient condition for the transfer of model structure to be successful. We conclude by showing if the transfer of model structure may be completed successively, then the Generalized Homotopy Hypothesis is true.
\end{abstract}

\section{Introduction}

In \emph{Pursuing Stacks}, Alexander Grothendieck hypothesized that models out of certain limit sketches, now called \emph{$(\infty,0)$-coherators}, should model all homotopy types in a statement called the \emph{Homotopy Hypothesis}. Moreover, Grothendieck hypothesized that $n$-truncated models should model all homotopy $n$-types, in what is called the \emph{Generalized Homotopy Hypothesis}.  Maltsiniotis rigorously defined $(\infty,0)$-coherators in \cite{Geo2} and the models are called Grothendieck $\infty$-groupoids. Henry  and Lanari  have put in a substantial effort to provide the structure of a semi-model structure on $n$-groupoids for all $n$ (see \cite{Hen2, HenLan,lanari2018semimodelstructuregrothendieckweak, lane2}). The semi-model structure is known to exist if the pushout conjecture (see Conjecture 5.3.3 of \cite{Hen2}) is true or we can build a path object for every cofibrant infinity groupoid (see \cite{lane2}). When such a semi-model structure exists, the Homotopy Hypothesis is known to be true.

In this paper, we examine a hypothetical solution to the Homotopy Hypothesis that relies on induction. We build a framework that allows us to hypothetically transfer a model structure from the category of $n$-groupoids onto the category of $(n+1)$-groupoids for all finite $n$. The category of $\infty$-groupoids is a limit of the functors that truncate an $(n+1)$-groupoid to an $n$-groupoid. We use this to equip the category of $\infty$-groupoids with a hypothetical limit model structure. Moreover, we show that the existence of the canonical model structure on $n$-groupoids for all finite $n$ is equivalent to the existence of the canonical model structure on $\infty$-groupoids. A direct corollary of this is the following: we can boil down proving the Homotopy Hypothesis to showing that can certain weak equivalences are preserved under a free functor (see Definition \ref{The Free Pushout Condition} and Prosotion \ref{induc_transfer}). 

As an aside, we have a reason to believe the semi-model structure is a full fledge model structure. We will elaborate on how we have reached this conclusion in a future paper. For a short synposis, we will construct \emph{weak orientals} for Grothendieck $\infty$-groupoids. The weak orientals will induce a realization-nerve adjunction between simplicial sets and Grothendieck $\infty$-groupoids. We will use this adjunction to put a hypothetical right transfer model structure on Grothendieck $\infty$-groupoids, prove that the hypothetical right transfer model structure coincides with the hypothetical canonical model structure and lay out a detailed plan to prove the Homotopy Hypothesis. Moreover, we use the existence of the weak orientals to build the \emph{singular homology} for Grothendieck $\infty$-groupoids. The prerequisite material to discuss the construction of the weak orientals is extensive and deserves its own paper. The results of this paper are reflective of this believe discussed; moreover, all the results obtained in this paper may be rephrased in terms of semi-model structures. Our paper serves as a small piece for a larger program to enlarge the theory and introduce algebraic variants for Grothendieck $\infty$-groupoids.

\section{Outline}
We organize this paper as follows. In Section 3, we provide background on distributive laws and series. We use distributive series of monads to inductively construct monads that output higher dimensional objects. In Section 4 and Section 5, we provide background on Grothendieck $\infty$-groupoids and build a distributive series of monads that induces an $(\infty,0)$-coherator, called the \emph{inductive coherator}. The tower
\[
IC_0\to IC_1\to\cdots\to IC_n\to IC_{n+1}\to\cdots
\]
whose colimit is the inductive coherator encodes the classical procedure of weak enrichment in higher category theory. 

In Section 6, we take the tower for the inductive coherator and begin the process of defining a chain of limit sketches that points in the opposite direction, where $IC_0^{\leq -1}$ is a one object category.
\[
\cdots \to IC_{n+2}^{\leq n+1}\to IC_{n+1}^{\leq n}\to\cdots\to IC_{1}^{\leq 0}\to IC_0^{\leq -1}
\]
This will come equipped with a map $\tr_n:IC_{n+1}\to IC_{n+1}^{\leq n}$ of limit sketches for all $n\geq 0$ such that the following square commutes for all $n\geq 0$. 

\[
\begin{tikzpicture}[node distance=2cm,auto]
	\node (A) {$IC_{n+1}$};
	\node (B)[right of=A] {$IC_{n+2}$};
	\node(C)[below of=A] {$IC_{n+1}^{\leq n}$};
	\node (D)[right of=C] {$IC_{n+2}^{\leq n+1}$};
	\draw[->] (A) to node {}(B);
	\draw[->,swap] (A) to node{$\tr_n$}(C);
	\draw[->] (B) to node{$\tr_{n+1}$}(D);
	\draw[->,swap] (D) to node {$\tr$}(C);
\end{tikzpicture}
\]
Moreover, we define the category of $n$-groupoids to be the category of models out of $IC_{n+1}^{\leq n}$ and this square induces a square of adjoints.
\[
\begin{tikzpicture}[>=stealth, thick, node distance=4cm]

\node (A) {$\Mod(IC_{n+2})$};
\node (B) [right of=A] {$\Mod(IC_{n+1})$};
\node (C) [below of=A] {$\mathpzc{(n+1)Gpd}$};
\node (D) [right of=C] {$\ngpd$};

\draw[transform canvas={yshift=0.3ex},->] (A) to node[above] {$U$} (B);
\draw[transform canvas={yshift=-0.3ex},<-] (A) to node[below] {$\free$} (B);

\draw[transform canvas={yshift=-0.3ex},->] (D) to node[below] {$D$} (C);
\draw[transform canvas={yshift=0.3ex},<-] (D) to node[above] {$\tr$} (C);

\draw[transform canvas={xshift=0.3ex},->] (A) to node[left] {$D$} (C);
\draw[transform canvas={xshift=-0.3ex},<-] (A) to node[right] {$\tr$} (C);

\draw[transform canvas={xshift=-0.3ex},->] (B) to node[left] {$\tr$} (D);
\draw[transform canvas={xshift=0.3ex},<-] (B) to node[right] {$D$} (D);

\end{tikzpicture}
\]

In Sections 7 to 10, we define a notion for the \emph{canonical model structures}, if they exist, on $\Mod(IC_{n+1})$ and $n$-groupoids for all $n\geq 0$. We show that the canonical model structure on $\Mod(IC_{n+1})$ exists if and only if the canonical model structure on $n$-groupoids for all $n\geq 0$. Moreover, we provide a necessary and sufficient condition to prove that the existence of the canonical model structure on $\Mod(IC_{n+1})$ implies the existence of the canonical model structure on $\Mod(IC_{n+2})$ for any $n\geq 0$. We finish in Section 11 by defining the \emph{canonical model structure} on $\infty$-groupoids, if it exists, as a type of limit model structure. We prove that the canonical model structure for $\infty$-groupoids exists if and only if the canonical model structure on $n$-groupoids exists for all finite $n$. 




\section{Distributive Laws and Series}
We begin by providing background on distribitutive series of monads. Eugenia Cheng first coined distributive series of monads in \cite{EC1} where she constructed a distributive series of monads whose composite monad is the free strict $n$-category for all finite $n$. We extends the definition of distributive series of monads to include a $\aleph_0$-indexed collection of monads. We will provide a name for when the induced transfinite composite of monads has the structure of a monad. 
\begin{definition}\label{mon_over_mon}
	Let $C$ be a category and $(S,\eta^S,\mu^S)$ and $(T,\eta^T,\mu^T)$ be monads on $C$.  A {\em distributive law of $S$ over $T$} consists of a natural transformation $\lambda : ST \Rightarrow TS$ such that the following diagrams commute.
	
	\[
		\begin{tikzpicture}[node distance=2cm]
			\node (A){$ST$};
			\node (B)[right of=A]{};
			\node(C)[right of=B]{$TS$};
			\node(D)[below of=B] {$T$};
			\node (E)[above of=B]{$S$};
			\draw[->] (A) to node[black,above=3] {$\lambda$} (C);
			\draw[->,swap](D) to node [black,left=3,below=3] {$\eta^ST$} (A);
			\draw[->] (D) to node [black,right=3] {$T\eta^S$} (C);
			\draw[->,swap](E) to node [black,left=3] {$S\eta^T$} (A);
			\draw[->] (E) to node [black,right=3] {$\eta^TS
				$} (C);
		\end{tikzpicture}
	\]
	
	\[
		\begin{tikzpicture}[node distance=2cm]
			\node (A) {$S^2T$};
			\node (B)[below of =A]{$ST$};
			\node (C)[below of=B]{$ST^2$};
			\node (D)[right of=C,node distance=4cm]{$STS$};
			\node (E) [above of=D]{};
			\node (F) [above of=E]{$TST$};
			\node (G) [right of=F,node distance=4cm]{$TS^2$};
			\node (H) [below of=G]{$TS$};
			\node (I)[below of=H]{$T^2S$};
			\draw[->,swap] (A) to node[black,left=3] {$\mu^ST$}(B);
			\draw[->] (A) to node [black,above=3] {$S\lambda$}(F);
			\draw[->] (F) to node[black,above=3] {$\lambda S$}(G);
			\draw[->](G) to node[black,right=3] {$T\mu_S$}(H);
			\draw[->](B) to node[black,above=3] {$\lambda$}(H);
			\draw[->] (C) to node[black,left=3] {$S\mu^T$}(B);
			\draw[->] (I) to node[black, right=3]{$\mu^TS$} (H);
			\draw[->,swap](C) to node[black, below=3]{$\lambda T$} (D);
			\draw[->,swap] (D) to node[black,below=3]{$T\lambda$}(I);
		\end{tikzpicture}
\]
\end{definition}

These diagrams tell us that "$\lambda$ interacts coherently with the monad structure for $S$ and $T$". The following theorem of Beck tells us that distributive laws allows us to compose two monads to form a new monad. 

\begin{proposition}(Beck, Proposition 1 of \cite{Beck_Dist})\label{comp_monad}
Let $(S,\eta^S,\mu^S)$, $(T,\eta^T,\mu^T)$ be monads on $C$ and $\lambda:ST\Rightarrow TS$ be a distributive law. Then the triple 
\[
(ST,S\eta^T\circ \eta^S,S\mu^T\circ\mu^ST^2\circ S\lambda T)
\]
is a monad.
\end{proposition}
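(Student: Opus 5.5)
Before planning, I fix the typing of the statement. The multiplication $S\mu^T\circ\mu^ST^2\circ S\lambda T$ has domain $STST$, so the middle component must send $TS\Rightarrow ST$. I will therefore read the distributive law as a transformation $\ell:TS\Rightarrow ST$, which is the law of Definition \ref{mon_over_mon} with the roles of $S$ and $T$ exchanged. Its axioms then read:
\[
\ell\circ \eta^T S = S\eta^T,\qquad \ell\circ T\eta^S=\eta^S T,
\]
\[
\ell\circ \mu^T S = S\mu^T\circ \ell T\circ T\ell,\qquad \ell\circ T\mu^S=\mu^S T\circ S\ell\circ \ell S.
\]
With this reading, the claim is that $ST$ carries unit $\eta=S\eta^T\circ\eta^S$ and multiplication $\mu=S\mu^T\circ\mu^ST^2\circ S\ell T$. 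The proof is a direct verification of the three monad axioms, using only the following tools:
\begin{itemize}
\item naturality of $\eta^S,\eta^T,\mu^S,\mu^T,\ell$;
\item the interchange law for horizontal composition;
\item the four axioms above;
\item the monad laws for $S$ and $T$.
\end{itemize}

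\textbf{Left unit law.} I need $\mu\circ\eta ST=\mathrm{id}_{ST}$.
\begin{enumerate}
\item Expand $\eta ST=S\eta^TST\circ\eta^SST$.
\item Apply $S\ell T$ to the factor $S\eta^TST$. The axiom $\ell\circ\eta^TS=S\eta^T$ gives $S\ell T\circ S\eta^TST=SS\eta^TT$.
\item Use naturality of $\mu^S$ to slide $SS\eta^TT$ past $\mu^ST^2$. The composite becomes $S\mu^T\circ S\eta^TT\circ\mu^ST\circ\eta^SST$.
\item Finish with $\mu^S\circ\eta^SS=\mathrm{id}$ and $\mu^T\circ\eta^TT=\mathrm{id}$.
\end{enumerate}
The right unit law $\mu\circ ST\eta=\mathrm{id}$ is symmetric. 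It uses the axiom $\ell\circ T\eta^S=\eta^ST$, then naturality of $\mu^T$ to move the resulting $\eta^S$ outward, and then the right unit laws $\mu^S\circ S\eta^S=\mathrm{id}$ and $\mu^T\circ T\eta^T=\mathrm{id}$.

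\textbf{Associativity.} I need $\mu\circ\mu ST=\mu\circ ST\mu$ as maps $STSTST\to ST$. My plan is to rewrite both sides into one normal form:
\[
STSTST\xrightarrow{\text{copies of }\ell}SSSTTT\xrightarrow{\ \mu^S\mu^S\ \text{and}\ \mu^T\mu^T\ }ST.
\]
\begin{enumerate}
\item \emph{Left side.} The inner $\mu ST$ produces a $\mu^T$ that must then pass through the outer copy of $\ell$. I rewrite $\ell\circ\mu^TS$ as $S\mu^T\circ\ell T\circ T\ell$ by the first multiplication axiom. This expresses the left side as three copies of $\ell$ (in the pattern that moves each $S$ leftward past each $T$), followed by multiplications.
\item \emph{Right side.} Dually, the inner $ST\mu$ produces a $\mu^S$ that must pass through $\ell$. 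The second multiplication axiom $\ell\circ T\mu^S=\mu^ST\circ S\ell\circ\ell S$ yields the same three $\ell$'s.
\item Interchange and naturality of $\mu^S,\mu^T$ let me reorder the remaining multiplications. Associativity of $\mu^S$ and $\mu^T$ then identifies the two iterated multiplications $SSS\to S$ and $TTT\to T$.
\end{enumerate}

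The main obstacle is the bookkeeping in the associativity step. I must check that the three $\ell$'s produced on the two sides occur in compatible orders, which is a hexagon/Yang–Baxter-type identity that here follows from naturality of $\ell$. I must also check that whiskerings are placed correctly at each stage. To keep this manageable I would do the computation with string diagrams, treating each $\ell$ as a crossing, so that naturality and interchange become isotopies. The only nontrivial moves are then the two multiplication axioms, each applied exactly once.
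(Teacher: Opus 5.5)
Your proposal is correct. The paper gives no argument of its own and only cites Beck, and your direct verification is the standard proof from Beck's paper. It proves the unit laws from the two unit axioms of the law, and it proves associativity by reducing both sides to the normal form $STSTST\to SSSTTT\to ST$. Each multiplication axiom is used exactly once, the two disjoint crossings commute by interchange, and the remaining step is associativity of $\mu^S,\mu^T$.

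You are also right that the printed multiplication only typechecks if the law goes $TS\Rightarrow ST$. Equivalently, with $\lambda:ST\Rightarrow TS$ as in Definition~\ref{mon_over_mon}, the composite monad should be $TS$ with multiplication $T\mu^S\circ\mu^T S^2\circ T\lambda S$. So your reinterpretation is the necessary repair of a typo, not a change of statement.
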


Moreover, Proposition 9.2.3, Proposition 9.2.5, and Proposition 9.2.6 of \cite{BarrWells} tells us about the new monad that arises from a distributive law and its category of algebras.

\begin{example}
Let $C=Set$, $T$ be the free abelian group monad, and $S$ be the free commutative monoid monad. There is a distributive law
\[
\lambda:TS \Rightarrow ST 
\]
that encodes the distributivity law axiom for rings.
\[
(a+b)\times(c+d) \longmapsto a\times c+a\times d+b\times c+b\times d.
\]
Then the induced monad
\[
(ST,S\eta^T\circ \eta^S,S\mu^T\circ\mu^ST^2\circ S\lambda T)
\]
is the monad for rings.
\end{example}

\begin{example}
Let $C$ be the category of $2$-globular sets, i.e the diagrams in $\Set$
\[
		\begin{tikzpicture}[node distance=2cm]
		\node (A) {$X_2$};
			\node (B)[right of=A]{$X_1$};
			\node (C)[right of=B]{$X_0$};
	\draw[transform canvas={yshift=0.3ex},->] (A) to node[above=3] {$s$} (B);
			\draw[transform canvas={yshift=-0.3ex},->,swap](A) to node[below=3] {$t$} (B);
			\draw[transform canvas={yshift=0.3ex},->] (B) to node[above=3] {$s$} (C);
			\draw[transform canvas={yshift=-0.3ex},->,swap](B) to node[below=3] {$t$} (C);

	\end{tikzpicture}
	\]	
	subject to the relations 
	\[
	s\circ s=s\circ t
	\]
	and 
	\[
	t\circ s=t\circ t.
	\]
Let $S$ be the monad for vertical composition of $2$-cells and $T$ be the monad for horizontal composition of $2$-cells and $1$-cells. There is a distributive law $\lambda:ST\Rightarrow TS$ that encodes the interchange axiom. Then the induced monad
\[
(ST,S\eta^T\circ \eta^S,S\mu^T\circ\mu^ST^2\circ S\lambda T)
\]
is the monad for strict $2$-categories.
\end{example}

\begin{definition}\label{dist_ser_monads_extended}
	A \emph{distributive series of monads} consists of monads $T_0,T_1,\cdots,T_n,\cdots$ together with a distributive law $\lambda^{i,j}:T_iT_j\Rightarrow T_jT_i$
	for $j>i\geq 0$ such that the diagram 
	\[
\begin{tikzpicture}[node distance=1.5cm]
	\node (A){$T_iT_jT_k$};
	\node (B)[right of=A]{};
	\node (C)[above of =B]{$T_jT_iT_k$};
	\node (D)[below of =B]{$T_iT_kT_j$};
	\node (E)[right of=C,node distance=3cm]{$T_jT_kT_i$};
	\node (F)[right of=D,node distance=3cm]{$T_kT_iT_j$};
	\node (G)[right of=B]{};
	\node (H)[right of =G,node distance=3cm]{$T_kT_jT_i$};
	\draw[->](A) to node[left=3]{$\lambda^{i,j}T_k$}(C);
	\draw[->,swap](A) to node[left=3]{$T_i\lambda^{j,k}$}(D);
	\draw[->](C)to node[above=3]{$T_j\lambda^{i,k}$}(E);
	\draw[->,swap](D) to node[below=3]{$\lambda^{i,k}T_j$}(F);
	\draw[->](E) to node[right=3]{$\lambda^{j,k}T_i$}(H);
	\draw[->,swap](F) to node[right=3]{$T_k\lambda^{i,j}$}(H);
\end{tikzpicture}
\]
commutes for $k>j>i\geq 0$.  We write 
	\[
	T:=((T_i)_{i=0}^\infty,(\lambda_{i,j})_{i>j})
	\]
	to denote this distributive series of monads. 
\end{definition}

This is an extension of Definition 2.3 of \cite{EC1}.

    \begin{definition}
A map $f:T\to S$ of distributive series consists of a monad morphism 
\[
f_i:T_i\to S_i
\]
for all $i\geq 0$ such that the square 
\[
\begin{tikzpicture}[node distance=2cm,auto]
	\node (A) {$T_iT_j$};
	\node (B)[right of=A] {$T_jT_i$};
	\node(C)[below of=A] {$S_iS_j$};
	\node (D)[right of=C] {$S_jS_i$};
	\draw[->] (A) to node {$\lambda_{i{,}j}$}(B);
	\draw[->,swap] (A) to node{$f_if_j$}(C);
	\draw[->] (B) to node{$f_jf_i$}(D);
	\draw[->,swap] (C) to node {$\lambda'_{i,j}$}(D);
\end{tikzpicture}
\]
commutes for $i>j\geq 0$.
\end{definition}

\begin{notation}
We use $\DistSys$ to denote the category of distributive systems and maps between them.
\end{notation}

\begin{theorem}\label{thmChengReIm_corr}
	Let $((T_i)_{i=0}^\infty,(\lambda_{i,j})_{i>j})$ be a distributive series of monads. Then for all $n\geq 2$ and $1\leq i < n$  we have induced monads 
	\[T_i T_{i-1} \cdots T_0 \quad \mbox{and} \quad T_{n} T_{n-1} \cdots T_{i+1}\]
	
	\noindent together with a distributive law of \ $T_i T_{i-1} \cdots T_0$\  over \ $T_{n} T_{n-1} \cdots T_{i+1}$\ i.e. 
	\[(T_i T_{i-1} \cdots T_0) (T_{n} T_{n-1} \cdots T_{i+1}) \Rightarrow (T_{n} T_{n-1} \cdots T_{i+1})(T_i T_{i-1} \cdots T_0)\]
	
	\noindent given by the composites of the $\lambda^{ij}$.  Moreover, all the monad structures on \ $T_n T_{n-1} \cdots T_0$ induced by combining the structure maps of the monads and distributive laws are the same for $n\geq 2$.
\end{theorem}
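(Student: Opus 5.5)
The plan is to reduce the statement to Cheng's finite result, Theorem 2.4 of \cite{EC1}, applied to each finite truncation $T_0,\dots,T_n$ of the series. The data of Definition \ref{dist_ser_monads_extended}, restricted to indices $\leq n$, is exactly a finite distributive series in Cheng's sense. The infinite indexing plays no role, because every monad and every distributive law in the statement involves only finitely many $T_k$.

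Even so, I would sketch the induction itself so that the argument stands on Proposition \ref{comp_monad} alone. The induction is on the number of factors.

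First, by induction on $i$, I make $T_iT_{i-1}\cdots T_0$ a monad. At step $i$ I need a distributive law of $T_{i-1}\cdots T_0$ over $T_i$. This is the composite of the $\lambda^{k,i}$ for $k<i$, which moves $T_i$ leftward past each factor in turn. The same construction, moving the block past $T_{n},\dots,T_{i+1}$ one factor at a time, handles $T_n\cdots T_{i+1}$. Combining the two, the law of $T_i\cdots T_0$ over $T_n\cdots T_{i+1}$ is the composite of all $\lambda^{k,l}$ with $k\leq i<l$, taken in any order compatible with moving each $T_l$ past each $T_k$.

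Next I check that each such composite is a distributive law. The key lemma is the following. Suppose $\lambda:AC\Rightarrow CA$ and $\lambda':BC\Rightarrow CB$ are distributive laws, and $\kappa:AB\Rightarrow BA$ is a distributive law making $BA$ a monad by Proposition \ref{comp_monad}. Then $(\lambda' A)(B\lambda)$ is a distributive law of $BA$ over $C$ exactly when the Yang--Baxter hexagon of Definition \ref{dist_ser_monads_extended} holds for the triple. There is also a dual version for composing on the other side.

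The unit axioms in this lemma follow directly from those of $\lambda$ and $\lambda'$. The multiplication axiom is where the hexagon is used. The multiplication of $BA$ contains $\kappa$, so one must slide $\kappa$ past $C$ to compare the two sides, and that slide is precisely the hexagon.

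Iterating the lemma, one factor at a time, gives all the required laws. Well-definedness of the composite of the $\lambda^{k,l}$ (independence of the order chosen) follows from the hexagon together with naturality. This is a coherence statement for braid-like rewritings of words, in the spirit of Matsumoto's theorem for reduced words.

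Finally, for the uniqueness of the monad structure on $T_n\cdots T_0$: any split $T_n\cdots T_0=(T_n\cdots T_{i+1})(T_i\cdots T_0)$ yields, via Proposition \ref{comp_monad}, a unit equal to the composite of the units of all $T_k$. The multiplication is $T_n\cdots T_0T_n\cdots T_0\to T_n\cdots T_0$. Unwinding the recursive definitions, it equals "apply some composite of the $\lambda^{k,l}$ that sorts the word into $T_nT_nT_{n-1}T_{n-1}\cdots T_0T_0$, then apply each $\mu^{T_k}$". By the coherence just noted, all sorting composites agree, so every way of combining the structure maps yields the same multiplication.

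I expect the main obstacle to be the multiplication axiom in the key lemma together with this coherence claim, since both are large diagram chases. I would handle them by the reduction to Cheng's finite theorem rather than redoing the chases, noting that the infinite case adds nothing because each statement is finitary.
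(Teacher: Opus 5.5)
Your proposal is correct and takes the same approach as the paper, whose proof just says the statement is Cheng's finite theorem (cited there as Theorem 2.1 of \cite{EC1}), proved the same way; the infinite indexing is irrelevant because each assertion involves only $T_0,\dots,T_n$. Your sketched induction, the Yang--Baxter lemma for composing laws and the coherence remark go beyond what the paper writes, but they are consistent with Cheng's argument.
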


\begin{proof}
The proof of this is identical to the one Cheng provided for Theorem 2.1 of \cite{EC1}. Hence, this is really a corollary of Theorem 2.1 of \cite{EC1}.
\end{proof}

Given a distributive series of monads, we would like for the transfinite compostion of monads to induce a monad structure. Therefore we give notation for the finite composites to start.

\begin{definition}
	Let $n\geq 0$, $C$ be a category and 
	\[
	T:=((T_i)_{i=0}^\infty,(\lambda_{i,j})_{i>j})
	\]
    be a distributive series of monads. The \emph{$n$th associated monad} is the monad structure 
on $T_nT_{n-1}\cdots T_0$ induced in Theorem \ref{thmChengReIm_corr} and is denoted by $(\hat{T}_n,\hat{\eta}_n,\hat{\mu}_n)$.
\end{definition}

The first difficulty we must address to obtain a transfinite composition of monads is whether the underlying pointed endofunctor exists.

\begin{definition}\label{ind_funct_on_dist}
	Let $C$ be a category and 
	\[
	T:=((T_i)_{i=0}^\infty,(\lambda_{i,j})_{i>j})
	\]
	be a distributive series of monads. The \emph{associated pointed endofunctor of the distributive series}  $\hat{T}$, if it exists, is defined to be
	to be the colimit of the diagram
	\[
	1_{C}\xrightarrow{\eta^0} T_0\xrightarrow{\eta^1_{T_0}} T_1T_0\xrightarrow{\eta^2_{T_1T_0}}\cdots
	\] \label{diag_for_ass_pntd_endo}
	in the category of endofunctors on $C$ with pointing given to be the map
	\[
	\hat{\eta}:1_C\Rightarrow \hat{T}
	\]
	induced by taking colimits. 
\end{definition}
Even if the underlying pointed endofunctor $(\hat{T},\hat{\eta})$ exists, we will not always be able to provide a monad structure. We give a special name to the distributive series when it can. 
	\begin{definition}\label{def_of_compl_distr_ser_of_monads}
	Let $C$ be a category and 
    \[
	T:=((T_i)_{i=0}^\infty,(\lambda_{i,j})_{i>j})
	\]
	be a distributive series of monads on $C$ whose associated pointed endofunctor $(\hat{T},\hat{\eta})$ exists. We say $T$ is \emph{completable} if there is a natural transformation $\hat{\mu}:\hat{T}^2\Rightarrow\hat{T}$ such that $(\hat{T},\hat{\eta},\hat{\mu})$ is a monad.
\end{definition}

\begin{example}
We may extend Theorem 4.10 of \cite{EC1} to form a completable distributive series of monads
	\[
	T:=((T_i)_{i=0}^\infty,(\lambda_{i,j})_{i>j})
	\]
    on $\bG-\Set$ whose induced monad $(\hat{T},\hat{\eta},\hat{\mu})$ on $\bG-\Set$ is the free strict $\omega$-category monad $(-)^*$ of \cite{Lein}.
\end{example}

\section{Grothendieck Infinity Groupoids}
\noindent All of the material of this section is taken from \cite{Dim1}, \cite{Bo2}, and \cite{Malt}. We present it here to give a self-contained treatment.

\begin{notation}
	The \emph{globe category} $\bG$ is the category
\[
		\begin{tikzpicture}[node distance=2cm]
		\node (A) {$0$};
			\node (B)[right of=A]{$1$};
			\node (C)[right of=B]{$2$};
			\node (D)[right of=C]{$\cdots$};
	\draw[transform canvas={yshift=0.3ex},->] (A) to node[above=3] {$s$} (B);
			\draw[transform canvas={yshift=-0.3ex},->,swap](A) to node[below=3] {$t$} (B);
			\draw[transform canvas={yshift=0.3ex},->] (B) to node[above=3] {$s$} (C);
			\draw[transform canvas={yshift=-0.3ex},->,swap](B) to node[below=3] {$t$} (C);
			\draw[transform canvas={yshift=0.3ex},->] (C) to node[above=3] {$s$} (D);
			\draw[transform canvas={yshift=-0.3ex},->,swap](C) to node[below=3] {$t$} (D);
	\end{tikzpicture}
	\]	
	subject to the relations 
	\[
	s\circ s=t\circ s
	\]
	and 
	\[
	t\circ t=s\circ t.
	\]
\end{notation}

\begin{definition}
	A \emph{globular object} $X$ in a category $C$ is a  presheaf over $\bG$. 
    \[
    X:\bG^\op\to C
    \]
\end{definition}

\begin{example}\label{example_important}
Define a functor $D :\bG \to \Top$ by letting $D$ send $i$ to the $i$-dimensional ball
\[ D^i = \{x \in \mathbb{R}^i : ||x|| \le 1\} \] for $i\geq 0$.
For $i \ge 1$, the morphisms $s$ and $t$ are sent by $D$
to $\sigma^i$ and $\tau^i$, respectively, defined by
\[ \sigma^i(x) = (x, \sqrt{1 - \Vert x\Vert^2}) \quad\text{and}\quad
\tau^i(x) = (x, -\sqrt{1 - \Vert x\Vert^2}) 
\]
for $x \in D^{i-1}$. These morphisms are the inclusions of the two hemispheres of $D^{i-1}$ into $D^i$. This induces a globular object 
\[
\Top(D^{(-)},X):\bG^\op\to \Top
\]
 of $\Top$ for every space $X$.
\end{example}

\begin{definition}
A \emph{zig-zag diagram} in $\bG$ is a diagram of the form. 

\begin{center}
	\begin{tikzpicture}[node distance=1cm, auto]
		
		\node (A) {$n_1$};
		\node (B) [right of=A]{} ;
		\node (C) [above of=B] {$n_2$};
		\node (D) [below of=C]{};
		\node(E)[right of=D]{$n_3$};
		\node (F) [right of=E]{} ;
		\node (G) [above of=F] {$n_4$};
		\node (H) [right of=G]{} ;
		\node (I) [below of=H] {$n_5$};
		\node (J)[right of=I]{};
		\node (X) [node distance=0.5cm, above of=J] {$\cdots$};
		
		\node (K)[right of=J] {$n_{2k-1}$};
		\node (L) [right of=K]{} ;
		\node (M) [above of=L] {$n_{2k}$};
		\node (N) [below of=M]{};
		\node(O)[right of=N]{$n_{2k+1}$};
		
		\draw[->,swap] (C) to node {$s$} (A);
		\draw[->] (C) to node {$t$} (E);
		\draw[->,swap] (G) to node {$s$} (E);
		\draw[->] (G) to node {$t$} (I);
		
		\draw[->,swap] (M) to node {$s$} (K);
		\draw[->] (M) to node {$t$} (O);
		
	\end{tikzpicture}
\end{center}
in $\bG$.
\end{definition}

\begin{definition}
A \emph{table of dimensions} in $\bG$ consists of a $(2k+1)$-tuple of natural numbers $\vec{n}=(n_1,\dots,n_{2k+1})$ such that 
\begin{center}
	\begin{tikzpicture}[node distance=1cm, auto]
		
		\node (A) {$n_1$};
		\node (B) [right of=A]{} ;
		\node (C) [above of=B] {$n_2$};
		\node (D) [below of=C]{};
		\node(E)[right of=D]{$n_3$};
		\node (F) [right of=E]{} ;
		\node (G) [above of=F] {$n_4$};
		\node (H) [right of=G]{} ;
		\node (I) [below of=H] {$n_5$};
		\node (J)[right of=I]{};
		\node (X) [node distance=0.5cm, above of=J] {$\cdots$};
		
		\node (K)[right of=J] {$n_{2k-1}$};
		\node (L) [right of=K]{} ;
		\node (M) [above of=L] {$n_{2k}$};
		\node (N) [below of=M]{};
		\node(O)[right of=N]{$n_{2k+1}$};
		
		\draw[->,swap] (C) to node {$s$} (A);
		\draw[->] (C) to node {$t$} (E);
		\draw[->,swap] (G) to node {$s$} (E);
		\draw[->] (G) to node {$t$} (I);
		
		\draw[->,swap] (M) to node {$s$} (K);
		\draw[->] (M) to node {$t$} (O);
		
	\end{tikzpicture}
\end{center}
forms a zig-zag diagram in $\bG$.
\end{definition}

\begin{definition}
Let $\vec{n}=(n_1,\dots,n_k)$ be a table of dimensions. The \emph{height} is
	\[
	\hgt(\vec{n})=\max\{n_1,\dots,n_k\}.
	\]
\end{definition}

\begin{definition}
Let $A:\bG^\op\to C$ be a globular object of $C$ and $\vec{n}=(n_1,\dots,n_{2k+1})$ be a table of dimensions. Then the limit of
\begin{center}
		\begin{tikzpicture}[node distance=1.25cm, auto]
			
			\node (A) {$An_1$};
			\node (B) [right of=A]{} ;
			\node (C) [above of=B] {$An_2$};
			\node (D) [below of=C]{};
			\node(E)[right of=D]{$An_3$};
			\node (F) [right of=E]{} ;
			\node (G) [above of=F] {$An_4$};
			\node (H) [right of=G]{} ;
			\node (I) [below of=H] {$An_5$};
			\node (J)[right of=I]{};
			\node (X) [node distance=0.75cm, above of=J] {$\cdots$};
			
			\node (K)[right of=J] {$An_{k-2}$};
			\node (L) [right of=K]{} ;
			\node (M) [above of=L] {$An_{k-1}$};
			\node (N) [below of=M]{};
			\node(O)[right of=N]{$An_k$};
			
			\draw[->] (A) to node {$As$} (C);
			\draw[->,swap] (E) to node {$At$} (C);
			\draw[->] (E) to node {$As$} (G);
			\draw[->,swap] (I) to node {$At$} (G);
			
			\draw[->] (K) to node {$As$} (M);
			\draw[->,swap] (O) to node {$At$} (M);
			
		\end{tikzpicture},
	\end{center}
if it exists, is called a \emph{globular product} of $(A,\vec{n})$. 
\end{definition}

\begin{definition}
	Let $\Theta_0$ be the category whose objects are all the tables of dimensions and 
	\[
	\Theta_0(\vec{n},\vec{m})=[\bG^\op,\Set](Y(\vec{n}),Y(\vec{m})),
	\]
    where $Y:\bG\to[\bG^\op,\Set]$ is the Yoneda embedding and $Y(\vec{n})$ and $Y(\vec{m)}$ are the colimits of the following diagrams.
    \[
    Y\circ\vec{n}:\vec{n}\to[\bG^\op,\Set]
    \]
     \[
    Y\circ\vec{m}:\vec{m}\to[\bG^\op,\Set]
    \]
\end{definition}
Let $\vec{n}=(n_1,\dots,n_{2k+1})$ be a table of dimensions and $\vec{m}$ be another tables of dimensions. Then
\[
\Theta_0^\op(\vec{m},\vec{n})=\Theta_0(\vec{n},\vec{m})=[\bG^\op,\Set](Y(\vec{n}),Y(\vec{m}))\cong \lim_{1\leq i\leq 2K+1}[\bG^\op,\Set](Y(n_i),Y(\vec{m}))
\]
\[
=\lim_{1\leq i\leq 2K+1}\Theta_0(n_i,\vec{m})=\lim_{1\leq i\leq 2K+1}\Theta_0^\op(\vec{m},n_i)=\Theta_0^\op(\vec{m},\lim_{1\leq i\leq 2K+1}n_i),
\]
so that
\[
\vec{n}\cong \lim_{1\leq i\leq 2K+1}n_i
\]
and therefore $\vec{n}$ is a globular product. Therefore $\Theta_0^\op$ has globular products. 

\begin{lemma}(Bourke, Lemma 2.1 of \cite{Bo2}) \label{univ_prop_of_tables_of_dim}
	There is a functor $D:\bG^\op\to\Theta_0^\op$ defined by $D(n)=(n)$ on objects that satisfies the following universal property: if $C$ is a category admitting $A$-globular products, there exists an essentially unique extension
	\[
	\begin{tikzpicture}[node distance=2cm, auto]
		
		\node (A) {$\bG^\op$};
		\node (B) [right of=A]{$C$} ;
		\node (C) [above of=A]{$\Theta_0^\op$};

		\draw[->,swap] (A) to node {$A$} (B);
		\draw[->](A) to node {$D^\op$}(C);
		\draw[->](C) to node {$A'$}(B);
		
	\end{tikzpicture}
	\]
	of $A$ to a globular product preserving functor $A'$. This sends $\vec{n}$ to the associated globular product.
\end{lemma}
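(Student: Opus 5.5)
The plan is to exhibit $\Theta_0^\op$ as the free completion of $\bG^\op$ under globular products, and then read off the extension from the computation just before the lemma. First, $D^\op:\bG^\op\to\Theta_0^\op$ sends $n$ to the one-entry table $(n)$. On morphisms we use $\Theta_0((n),(m))=[\bG^\op,\Set](Y(n),Y(m))\cong\bG(n,m)$ by Yoneda, so $D$ is fully faithful. The computation preceding the lemma shows that every table $\vec{n}=(n_1,\dots,n_{2k+1})$ is, in $\Theta_0^\op$, the globular product of the $(n_i)$ along the maps $D(s),D(t)$. Thus every object of $\Theta_0^\op$ is a globular product of objects in the image of $D^\op$.

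Given $A:\bG^\op\to C$ with $C$ admitting $A$-globular products, we define $A'(\vec{n})$ to be a chosen limit of the zig-zag $An_1\to An_2\leftarrow An_3\to\cdots$; for one-entry tables we choose $A'((n))=An$. For a morphism $f:\vec{m}\to\vec{n}$ in $\Theta_0^\op$, i.e.\ a map of presheaves $Y(\vec{n})\to Y(\vec{m})$, we restrict along the colimit inclusions $Y(n_i)\to Y(\vec{n})$. This gives elements $f_i\in Y(\vec{m})(n_i)=\operatorname{colim}_j\,\bG(n_i,m_j)$, compatible with the zig-zag.

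The key step, and the main obstacle, is to show that each $f_i$ is represented by a single map $n_i\to m_j$ in $\bG$; that is, a map from a representable into $Y(\vec{m})$ factors through one leg. Since colimits in presheaves are computed pointwise, $Y(\vec{m})(n_i)$ is the pushout of sets of the $\bG(n_i,m_j)$ glued along the $\bG(n_i,m_{2l})$. Any element therefore lies in the image of some $\bG(n_i,m_j)$, and the ambiguity is exactly identification along the gluing maps. Functoriality of $A$ then gives a morphism $A'(\vec{m})\to Am_j\to An_i$, well defined because $A$ respects the identifications. These morphisms are compatible with the zig-zag for $\vec{n}$, so they induce $A'(f):A'(\vec{m})\to A'(\vec{n})$ by the universal property of the limit. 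Functoriality of $A'$ follows from uniqueness in the limit universal property. By construction $A'\circ D^\op=A$, and $A'$ preserves globular products because it sends the canonical cone exhibiting $\vec{n}$ as a globular product to the limit cone defining $A'(\vec{n})$.

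For essential uniqueness, let $B:\Theta_0^\op\to C$ be any globular-product-preserving functor with $B\circ D^\op\cong A$. Then $B(\vec{n})$ is a limit of the same zig-zag, since $\vec{n}$ is the globular product of the $(n_i)$ and $B$ preserves it. This gives canonical comparison isomorphisms $B(\vec{n})\cong A'(\vec{n})$. Naturality in $\vec{n}$ is checked after composing with the limit projections, where it reduces to naturality of $B\circ D^\op\cong A$ on the maps $n_i\to m_j$ chosen above.
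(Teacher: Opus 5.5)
Your proof is correct. The paper does not actually prove this lemma. It only carries out the computation just before the statement, showing that $\vec n$ is the limit of the $(n_i)$ in $\Theta_0^\op$, and then cites Bourke for the universal property. You reuse that computation and then supply the omitted existence and uniqueness argument. This is the usual proof that $\Theta_0$ is the free completion of $\bG$ under globular sums.

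The one nontrivial input is this: colimits in $[\bG^\op,\Set]$ are computed pointwise, so every map $Y(n_i)\to Y(\vec m)$ factors through a single leg $Y(m_j)$. The only ambiguity is along the gluing maps $\bG(n_i,m_{2l})\to\bG(n_i,m_{2l\pm1})$. That is exactly what lets you build $A'(f)$ from $A$ alone. Compared with the citation, your route works directly with the paper's own definition of $\Theta_0$ as a full subcategory of presheaves, including its op conventions, so nothing depends on matching conventions with Bourke.

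Three steps you assert deserve a line each.
\begin{enumerate}
\item Well-definedness uses that the chosen limit cone on $A'(\vec m)$ commutes at the even-indexed entries. For $x\colon n_i\to m_{2l}$ this gives
\[
A(s\circ x)\circ\pi_{2l-1}=A(x)\circ\pi_{2l}=A(t\circ x)\circ\pi_{2l+1}.
\]
\item Compatibility of your components with the zig-zag of $\vec n$ follows from naturality of $f$, which gives $f_{2l-1}\cdot s=f_{2l+1}\cdot t$ in $Y(\vec m)(n_{2l})$. Combine this with well-definedness at $n_{2l}$.
\item For essential uniqueness, note that the comparison isomorphism $B\cong A'$ is itself unique, since it is determined by its composites with the limit projections.
\end{enumerate}
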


\begin{definition}
	An \emph{extension over $\Theta_0^\op$} is a functor $H:\Theta_0^\op\to C$ that preserves globular products. A map of extensions from $H:\Theta_0^\op\to C$ to another $K:\Theta_0^\op\to D$ is a map $\gamma:C\to D$ that preserves globular products and satisfies the following equation.
	\[
	\gamma\circ H=K
	\]
	\end{definition}
	
\begin{notation}
	We write $\Ex_{\Theta_0^\op}$ to denote the category of extensions over $\Theta_0^\op$ and maps between them. Moreover, we write $\Th_{\Theta_0^\op}$ to denote the full subcategory of $\Ex_{\Theta_0^\op}$ whose objects are identity on objects functors. The objects of $\Th_{\Theta_0^\op}$ will be called \emph{globular theories}.
\end{notation}

\begin{definition}
	Let $A$ be a globular object of $C$.  An \emph{admissible pair} of $n$-cells in $A$ is a pair 
	\[
		\begin{tikzpicture}[node distance=2cm]
		\node (A) {$X$};
			\node (B)[right of=A]{$A(n)$};
	\draw[transform canvas={yshift=0.3ex},->] (A) to node[above=3] {$f$} (B);
			\draw[transform canvas={yshift=-0.3ex},->,swap](A) to node[below=3] {$g$} (B);
	\end{tikzpicture}
	\]
	of maps in $C$ where $X$ is a globular product of $A$ and $\hgt(X)\leq n+1$ such that either $n=0$ or $s\circ f=s\circ g$ and $t\circ f=t\circ g$. The \emph{dimension} of an admissible pair of the form above is $\dim(f,g)=n$. We call $(f,g)$ an admissible pair at height $n$.
\end{definition}
This is often called a parallel pair. We take the naming convention from Ara and call our pairs admissible.

\begin{definition}	
	Let $A$ be a globular object of $C$. A \emph{lift} for an admissible pair is an arrow $\delta_{f{,}g}:X\to A(n+1)$ such that the triangles in the following diagram commute serially.
	\[
		\begin{tikzpicture}[node distance=2cm]
			\node (A) {$X$};
			\node (B)[right of=A]{$A(n)$};
			\node (C) [above of=B]{$A(n+1)$};
			\draw[transform canvas={yshift=0.3ex},->] (A) to node[above=3] {$f$} (B);
			\draw[transform canvas={yshift=-0.3ex},->,swap](A) to node[below=3] {$g$} (B);
			\draw[transform canvas={xshift=-0.3ex},->] (C) to node[left=3] {$s$} (B);
			\draw[transform canvas={xshift=0.3ex},->,swap](C) to node[right=3] {$t$} (B);
			\draw[->](A) to node[above=4] {$\delta_{f,g}$}(C);
		\end{tikzpicture}
		\]
\end{definition}

\begin{definition}
	We say $A$ is \emph{contractible} if every admissable pair  has a lift .
\end{definition}

\begin{definition}
	We say that a globular theory  $J:\Theta_0^\op\to C$ is \emph{contractible} if it is contractible as a globular object of $C$.
\end{definition}

\begin{lemma}
	Given a map $F:C\to D$ of globular theories, $F$ maps admissible pairs to admissible pairs.
\end{lemma}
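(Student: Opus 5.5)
The argument unpacks what a map of globular theories is and checks the three defining conditions of an admissible pair one by one. Let $J:\Theta_0^\op\to C$ and $K:\Theta_0^\op\to D$ be globular theories and $F:C\to D$ a map between them. By definition $F$ preserves globular products and satisfies $F\circ J=K$. Since $J$ and $K$ are identity on objects, $F$ is also identity on objects. The globular object underlying $C$ is $J\circ D^\op$, and the one underlying $D$ is $K\circ D^\op=F\circ J\circ D^\op$. In particular $F$ sends the object $(n)$ to $(n)$, and it sends the source and target maps $s,t:(n+1)\to(n)$ of $C$ to the corresponding maps of $D$. This uses only $F J=K$ applied to the morphisms $D^\op(s), D^\op(t)$.

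Now take an admissible pair $f,g:X\to (n)$ in $C$. Here $X$ is a globular product of $J D^\op$, say $X\cong\lim_i (n_i)$ for a table of dimensions $\vec n$ with $\hgt(\vec n)\le n+1$. We check the three conditions for $(Ff,Fg)$.

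\emph{The domain.} Since $F$ preserves globular products, $FX$ is the limit of the image of the zig-zag diagram, namely $\lim_i F(n_i)=\lim_i (n_i)$ computed in $D$. So $FX$ is a globular product of $K D^\op$ for the same table $\vec n$, and its height is still at most $n+1$. Concretely, because everything is identity on objects, $FX=\vec n=X$ as objects.

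\emph{The codomain.} The maps $Ff,Fg$ land in $F(n)=(n)$, which is the underlying globular object of $D$ at dimension $n$.

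\emph{The boundary equations.} If $n=0$ there is nothing to check. Otherwise apply $F$ to $s\circ f=s\circ g$ and $t\circ f=t\circ g$. Functoriality gives $F(s)\circ Ff=F(s)\circ Fg$, and similarly for $t$. Since $F(s)=s$ and $F(t)=t$ by the first paragraph, these are exactly the required equations in $D$, and the dimension $n$ is preserved.

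The only step requiring care is the domain. The subtlety is that "is a globular product" refers to a particular limit cone, and "height" is attached to the table of dimensions rather than to the object alone. I would handle this by working with the explicit cone of projections $X\to(n_i)$. Preservation of globular products says $F$ sends this cone to a limit cone over the same zig-zag, now in $D$, and that cone witnesses $FX$ as a globular product with table $\vec n$. If one prefers not to rely on the identity-on-objects shortcut, Lemma \ref{univ_prop_of_tables_of_dim} gives the same conclusion: essential uniqueness of globular-product-preserving extensions identifies $F\circ J$ with $K$ on globular products.
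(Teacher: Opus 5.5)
Your proof is correct and follows the same route as the paper, which simply applies functoriality of $F$ to $s\circ f=s\circ g$ and $t\circ f=t\circ g$. Your additional checks go beyond what the paper writes down explicitly. These are that $F$ is identity on objects and fixes $s,t$ because $F\circ J=K$, and that the domain is still a globular product of height at most $n+1$. The paper uses these facts without comment, so your version is a more careful write-up of the same argument rather than a different one.
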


\begin{proof}
Suppose 
\[
		\begin{tikzpicture}[node distance=2cm]
		\node (A) {$\vec{p}$};
			\node (B)[right of=A]{$k$};
	\draw[transform canvas={yshift=0.3ex},->] (A) to node[above=3] {$f$} (B);
			\draw[transform canvas={yshift=-0.3ex},->,swap](A) to node[below=3] {$g$} (B);
	\end{tikzpicture}
	\]
is an admissible pair in $C$. Then 
\[
s\circ F(f)=F(s\circ f)=F(s\circ g)=s\circ F(g)
\]
and 
\[
t\circ F(f)=F(t\circ f)=F(t\circ g)=t\circ F(g),
\]
so that 
\[
		\begin{tikzpicture}[node distance=2cm]
		\node (A) {$\vec{p}$};
			\node (B)[right of=A]{$k$};
	\draw[transform canvas={yshift=0.3ex},->] (A) to node[above=3] {$Ff$} (B);
			\draw[transform canvas={yshift=-0.3ex},->,swap](A) to node[below=3] {$Fg$} (B);
	\end{tikzpicture}
	\]
	is admissible in $D$.
\end{proof}

\begin{definition}\label{eq:8}
	Let $J:\Theta^\op\to C$ be a globular theory. We say that $C$ is an \emph{$(\infty,0)$-coherator} if $J$ is contractible and there is a diagram of the form in the category of theories where $C$ is the colimit. 
	\[
	C_0=\Theta_0^\op\to C_1\to C_2\to\cdots\to C_n\to\cdots
	\]
	The morphism $C_n\to C_{n+1}$ is constructed by taking a set $U_n$ of admissible pairs of $C_n$ and freely forming $C_{n+1}$ from $C_n$ by formally adding a lift to each admissible pair in $U_n$ for $n\geq 0$ 
\end{definition}

\begin{notation}
Let $C$ be an $(\infty,0)$-coherator.  The category 
\[
\inftygpd_C=\Mod_{\Theta_0^\op}(C)
\]
is the full subcategory of $[C,\Set]$ containing the globular product preserving functors. The objects are called \emph{Grothendieck $\infty$-groupoids} or \emph{$\infty$-groupoids}, for short.
\end{notation}

\section{The Inductive Coherator}
In this section, we construct a cofibrantly generated algebraic weak factorization system on $\Th_{\Theta_0^\op}$. Then we partition the generating maps for the cofibrantly generated algebraic weak factorization system and obtain an $\aleph_0$-indexed collection of algebraic weak factorization systems. We show how to upgrade the induced fibrant replacement monads to a completable distributive series of monads.
\begin{definition}\label{spheres_in_inf_gpds}
	For all $\vec{p}\in\ob(\Theta_0^\op)$ and $k\geq 0$ with $\hgt(\vec{p})\leq k+1$ , define 
	\[
	S_{\vec{p},k}
	\]
	to be the theory obtained by freely adding two maps 
\[
		\begin{tikzpicture}[node distance=2cm]
			\node (A) {$\vec{p}$};
			\node (B)[right of=A]{$k$};
			\draw[transform canvas={yshift=0.3ex},->] (A) to node[above=3] {$f$} (B);
			\draw[transform canvas={yshift=-0.3ex},->,swap](A) to node[below=3] {$g$} (B);
		\end{tikzpicture}
		\]
to $\Theta_0^\op$ such that $s\circ f=s\circ g$ and $t\circ f=t\circ g$ when $k\geq 1$. This comes equipped with a theory structure map.
	\[
	\Theta_0^\op\hookrightarrow S_{\vec{p},k}
	\]
\end{definition}

\begin{definition}\label{disks_in_inf_gpds}
Let $D_{\vec{p},k}$ be obtained by freely adding a map $\delta_{f,g}:\vec{p}\to k+1$ to $S_{\vec{p},k}$  such that the triangles 

\[
		\begin{tikzpicture}[node distance=2cm]
			\node (A) {$\vec{p}$};
			\node (B)[right of=A]{$k$};
			\node (C) [above of=B]{$k+1$};
			\draw[transform canvas={yshift=0.3ex},->] (A) to node[above=3] {$f$} (B);
			\draw[transform canvas={yshift=-0.3ex},->,swap](A) to node[below=3] {$g$} (B);
			\draw[transform canvas={xshift=-0.3ex},->] (C) to node[left=3] {$s$} (B);
			\draw[transform canvas={xshift=0.3ex},->,swap](C) to node[right=3] {$t$} (B);
			\draw[->](A) to node[above=4] {$\delta_{f,g}$}(C);
		\end{tikzpicture}
		\]
commute. This comes equipped with the following theory structure map.
\[
\Theta_0^\op\hookrightarrow D_{\vec{p},k}
\]
\end{definition}
 
\begin{notation}\label{incl_of_sphere_into_disks}
There is an inclusion map of theories.
\[
j_{\vec{p},k}:S_{\vec{p},k}\to D_{\vec{p},k}
\] 
for all tables of dimensions $\vec{p}$ and $n\geq 0$. We let 
	\[
	I:=\{S_{\vec{p},k}\xrightarrow{j_{\vec{p},k}} D_{\vec{p},k}:\vec{p}\in\ob(C),k\geq 0\}.
	\]
\end{notation}
\begin{lemma} \label{admiss_1}
	The objects constructed in Definition \ref{spheres_in_inf_gpds} and Definition \ref{disks_in_inf_gpds} are presentable objects of $\Th_{\Theta_0^\op}$.  Moreover, the set $I$ of Notation \ref{incl_of_sphere_into_disks} is admissible for the ASOA of \cite{Garn2}.
\end{lemma}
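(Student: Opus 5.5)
The plan is to realize $\Th_{\Theta_0^\op}$ as a locally finitely presentable category. Then both $S_{\vec{p},k}$ and $D_{\vec{p},k}$ can be identified as finitely presented objects, and Garner's admissibility hypotheses follow from general facts about such categories.

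First I show that $\Th_{\Theta_0^\op}$ is locally finitely presentable. A globular theory is an identity-on-objects, globular-product-preserving functor out of $\Theta_0^\op$. Such theories are therefore models of a finite-limit sketch, or equivalently of an essentially algebraic theory. The sorts are the hom-sets $C(\vec{n},m)$, indexed by a table of dimensions $\vec{n}$ and a dimension $m$. The operations are composition, together with the pairing maps into globular products; the latter are only partially defined, namely on compatible families. The equations are the category axioms, the compatibility with $\Theta_0^\op$, and the universal property of globular products. Morphisms of theories are exactly homomorphisms of these structures. By the Adámek--Rosický characterization, categories of models of essentially algebraic theories are locally presentable. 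Since every operation here has finite arity (a table of dimensions is finite), the category is in fact locally finitely presentable. In particular it is cocomplete, which is the first hypothesis of the ASOA in \cite{Garn2}.

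Next I show that $S_{\vec{p},k}$ and $D_{\vec{p},k}$ are finitely presentable. By construction, $S_{\vec{p},k}$ is the theory freely generated over $\Theta_0^\op$ by two morphisms $f,g:\vec{p}\to k$ subject to finitely many equations. Consequently, for any theory $C$, maps $S_{\vec{p},k}\to C$ in $\Th_{\Theta_0^\op}$ correspond to admissible pairs $\vec{p}\rightrightarrows k$ in $C$. So the functor $\Th_{\Theta_0^\op}(S_{\vec{p},k},-)$ is isomorphic to the equalizer $\{(f,g)\in C(\vec{p},k)^2 : sf=sg,\ tf=tg\}$. Filtered colimits in $\Th_{\Theta_0^\op}$ are computed sortwise on hom-sets, because filtered colimits commute with the finite limits that express the essentially algebraic structure. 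Hence this equalizer commutes with filtered colimits, and $S_{\vec{p},k}$ is finitely presentable. The same argument applies to $D_{\vec{p},k}$: its representable functor is the set of triples $(f,g,\delta)$ satisfying finitely many equations. The initial object $\Theta_0^\op$ is finitely presentable for the same reason, since its representable functor is constant at a point. A similar check shows that $j_{\vec{p},k}$ corresponds to the evident forgetful map of representables.

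Finally I verify admissibility. Garner's small object argument requires a cocomplete category $\mathcal{C}$ and a set of maps $I$ whose domains are small relative to the relevant transfinite composites; local presentability together with small domains suffices. The maps in $I$ are indexed by pairs $(\vec{p},k)$, which form a set. Every domain $S_{\vec{p},k}$ is $\aleph_0$-presentable. The argument therefore converges at stage $\omega$ and produces a cofibrantly generated algebraic weak factorization system.

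The main obstacle is the first step. One has to set up the essentially algebraic description of globular theories carefully: the identity-on-objects condition is handled by fixing the sort set, and preservation of globular products must be encoded by partial pairing operations. With that description, filtered colimits are genuinely computed hom-wise, and this fact is exactly what the finitary presentability argument relies on. I would first present $\Th_{\Theta_0^\op}$ as the category of models of the evident finite-limit sketch. Alternatively, it can be presented as a full reflective subcategory of $\Theta_0$-indexed categories closed under filtered colimits.
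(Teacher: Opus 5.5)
Your argument is correct. The paper gives no proof of its own beyond citing Subsection 3.11 and Lemma 3.12 of \cite{Malt}, and your route is the local-presentability argument that citation supplies: globular theories are models of a finitary essentially algebraic theory, so the category of theories is locally finitely presentable, and the representables of $S_{\vec{p},k}$ and $D_{\vec{p},k}$ are finite limits of hom-sorts, so they commute with the sortwise filtered colimits. The only overstatement is at the end: in a locally presentable category Garner's admissibility holds for any small set of maps, so the smallness of domains and the convergence at stage $\omega$ are more than the lemma needs.
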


\begin{proof}
See Subsection 3.11 and Lemma 3.12 of \cite{Malt}.
\end{proof}

\begin{notation}
	Let $(L_I,E_I,R_I,\delta_I)$ be the AWFS cofibrantly generated by $I$ in $\Th_{\Theta_0^\op}$. 
\end{notation}
\begin{lemma}
Let $J^{FC}:\Theta_0^\op\to FC$ be the fibrant replacement of the initial object of $\id_{\Theta_0^\op}$ with respect to the AWFS $(L_I,E_I,R_I,\delta_I)$ on $\Th_{\Theta_0^\op}$. Then $FC$ is an $(\infty,0)$-coherator.
\end{lemma}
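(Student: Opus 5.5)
Two things have to be checked against Definition \ref{eq:8}: that $FC$ is contractible, and that $FC$ arises as the colimit of a tower $\Theta_0^\op=C_0\to C_1\to\cdots$ in which each step freely adds lifts to a set of admissible pairs. Both will come from Garner's small object argument for the admissible set $I$ (Lemma \ref{admiss_1}), applied to the initial object $\id_{\Theta_0^\op}$ of $\Th_{\Theta_0^\op}$.

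\textbf{Contractibility.} The fibrant replacement $R_I(\id_{\Theta_0^\op})$ carries an algebra structure for the monad $R_I$. In particular the map $FC\to 1$ to the terminal theory has the right lifting property against every $j_{\vec{p},k}$. Let $(f,g)\colon \vec{p}\rightrightarrows k$ be an admissible pair in $FC$, with $\hgt(\vec{p})\leq k+1$. By the universal property of $S_{\vec{p},k}$ it is exactly a map of theories $S_{\vec{p},k}\to FC$ under $\Theta_0^\op$. Lifting against $j_{\vec{p},k}$ extends this map to $D_{\vec{p},k}\to FC$. The image of $\delta_{f,g}$ under the extension is a lift $\vec{p}\to k+1$, and the triangles commute because they already commute in $D_{\vec{p},k}$. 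Hence $FC$ is contractible.

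\textbf{Tower presentation.} Unwind Garner's construction as a transfinite sequence. Set $C_0=\Theta_0^\op$. Given $C_n$, let $U_n$ be the set of all admissible pairs of $C_n$, that is, all maps $S_{\vec{p},k}\to C_n$. Form $C_{n+1}$ as the pushout of $\coprod_{U_n} S_{\vec{p},k}\to\coprod_{U_n} D_{\vec{p},k}$ along the induced map $\coprod_{U_n} S_{\vec{p},k}\to C_n$; this is exactly the theory obtained from $C_n$ by freely adjoining a lift to each pair in $U_n$. Garner's version also quotients at each stage, identifying lifts already chosen at earlier stages. Those coequalizer steps do not fit Definition \ref{eq:8} as written. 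I would handle this in one of two ways: either observe that for the initial object the quotients are absorbed by restricting $U_n$ to pairs not already lifted, or else show that $FC$ is isomorphic to the colimit $C_\omega$ of the naive tower. For the second route, note that the naive colimit $C_\omega$ is already contractible. The domains $S_{\vec{p},k}$ are finitely presentable (Lemma \ref{admiss_1}), so every admissible pair of $C_\omega$ factors through some $C_n$ and acquires a lift in $C_{n+1}$. The free algebraic structure then gives the comparison between $C_\omega$ and $FC$.

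\textbf{Main obstacle.} The delicate step is identifying $FC$ with an $\omega$-indexed colimit of free lift-adjunctions. Two points need care. First, one must show the construction converges at stage $\omega$, which uses the finite presentability of the $S_{\vec{p},k}$ and the fact that filtered colimits in $\Th_{\Theta_0^\op}$ are computed as they are for globular theories. Second, one must show that Garner's coequalizer steps can be avoided or rephrased as restricting the set $U_n$. I would appeal to Maltsiniotis's treatment (Subsection 3.11 of \cite{Malt}) for the filtered-colimit facts.
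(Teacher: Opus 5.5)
Your contractibility argument is correct. Your overall plan (read contractibility off the free $R_I$-algebra structure, and read the tower off the small object argument) is what the paper's proof would have to carry out; that proof consists only of a pointer to Theorem 3.14 of \cite{Malt}. The gap is the step you flag as the main obstacle: you never actually exhibit $FC$ as a colimit of free lift-additions, and one of your two routes fails. For route (b), the comparison supplied by algebraic freeness is not an isomorphism $FC\cong C_\omega$. In $C_\omega$ a pair receives a fresh lift at every stage after it appears. For example, $(1_{(0)},1_{(0)})\colon(0)\rightrightarrows 0$ gets $\delta^{(1)}\colon(0)\to 1$ in $C_1$ and another lift $\delta^{(2)}$ in $C_2$. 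These remain distinct in $C_\omega$: take a contractible globular set with one $0$-cell and two $1$-cells $a\neq b$, and make it a $C_\omega$-model by choosing fillers stage by stage with $\delta^{(1)}\mapsto a$ and $\delta^{(2)}\mapsto b$. By contrast, $C_\omega\to FC$ sends both to the single chosen lift. So $FC\to C_\omega\to FC$ only exhibits $FC$ as a retract of $C_\omega$, which does not make $FC$ itself a colimit of the kind required by Definition \ref{eq:8}.

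Route (a) is the right one, but ``the quotients are absorbed by restricting $U_n$'' is exactly what needs proof, and it has nothing to do with starting at the initial object. Garner's coequalizer identifies the new lift of a lifting problem that factors through the previous stage with the lift chosen there. This collapses to ``only lift pairs not yet lifted'' precisely when each admissible pair of $C_n$ comes from $C_{n-1}$ in at most one way, i.e.\ when the free lift-additions $C_{n-1}\to C_n$ are injective on hom-sets. Otherwise two distinct pairs of $C_{n-1}$ that become equal in $C_n$ carry two lifts which the coequalizer identifies, and that stage is not a free lift-addition. Injectivity is not automatic for globular theories. If $\varphi\circ s=\psi\circ s$ for distinct $\varphi,\psi\colon k\to m$, then freely lifting $(1_k,1_k)$ adjoins a section of $s$ and forces $\varphi=\psi$. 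So you need a lemma specific to your tower: free lift-additions of theories whose only relations are those of $\Theta_0^\op$ and the boundary equations of earlier lifts are injective on hom-sets (e.g.\ via a normal form for their morphisms). With it the proof closes:
\begin{itemize}
\item take $U_n$ to be the admissible pairs of $C_n$ not in the image of $C_{n-1}$;
\item finite presentability of the $S_{\vec{p},k}$ plus injectivity give every admissible pair of the colimit a unique first stage, hence exactly one added lift;
\item induction along the tower shows the colimit with these lifts is initial among theories equipped with a lifting function against $I$;
\item by Garner's algebraic freeness, that initial object is the fibrant replacement of $\id_{\Theta_0^\op}$, i.e.\ $FC$.
\end{itemize}
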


\begin{proof}
Just rework the proof of Theorem 3.14 of \cite{Malt}.
\end{proof}

We use $FC$ to denote that $FC$ was our first idea of choice of coherator for weak $\infty$-groupoids. We note that $FC$ is the reduced coherator mentioned in Example 2.12 of \cite{Dim1}.  However, we shall not $FC$ as our choice of theory. We choose to build a coherator whose data is added inductively by the height of admissible pairs. To do so, we define
\[
I_k=\{S_{\vec{p},k}\xrightarrow{j_{\vec{p},k}} D_{\vec{p},k}:\vec{p}\in\ob(\Theta_0^\op)\}
\]
for all $k\geq 0$.

\begin{lemma}
The sets $I_k$ are admissible for the ASOA of \cite{Garn2}.
\end{lemma}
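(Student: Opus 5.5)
The plan is to reduce the statement to Lemma \ref{admiss_1}, since each $I_k$ is a subset of $I$. Garner's admissibility conditions for the algebraic small object argument on a set of maps $J$ in a category $\mathcal{K}$ are: $\mathcal{K}$ is cocomplete, and either $\mathcal{K}$ is locally presentable or, more weakly, the domains of the maps in $J$ are small with respect to a suitable class of transfinite composites. I will check that both conditions pass from $I$ to $I_k$.

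The first step concerns the category. Cocompleteness and local presentability of $\Th_{\Theta_0^\op}$ are properties of the ambient category and do not depend on the chosen set of maps. This was already established in the course of Lemma \ref{admiss_1}, via Subsection 3.11 of \cite{Malt}, where globular theories are exhibited as the models of a limit sketch.

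The second step concerns the maps. By definition,
\[
I_k=\{j_{\vec{p},k}:\vec{p}\in\ob(\Theta_0^\op)\}\subseteq I,
\]
so every domain $S_{\vec{p},k}$ and every codomain $D_{\vec{p},k}$ is one of the objects shown in Lemma \ref{admiss_1} to be presentable. Hence each domain is $\lambda$-presentable for some regular cardinal $\lambda$. Because $I_k$ is a set, a single $\lambda$ can be chosen uniformly, for instance the supremum of the cardinals used for $I$. In a locally presentable category, any set of maps with presentable domains satisfies Garner's smallness hypothesis. The admissibility of $I_k$ therefore follows from the same theorem of \cite{Garn2} that was applied to $I$.

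I expect the only real subtlety to be confirming that the admissibility notion in \cite{Garn2} is hereditary. It must depend only on the ambient category together with smallness of the individual domains, and not on a global property of the set of maps. I would check this against Garner's definition directly. If his condition is instead phrased in terms of a factorization system $(\mathcal{E},\mathcal{M})$ with $\mathcal{E}$-well-copoweredness, I would note that this data also lives on $\Th_{\Theta_0^\op}$ and is independent of the generating set, so the same reduction goes through.
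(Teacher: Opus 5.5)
Your argument is correct and is the paper's own argument: the paper proves this by simply writing ``See Lemma \ref{admiss_1}'', and you have spelled out why that suffices, namely $I_k\subseteq I$ and admissibility depends only on the ambient category and the smallness of the individual domains. One minor point: take a regular cardinal above the supremum rather than the supremum itself, since the supremum need not be regular. In a locally presentable category even this step is unnecessary.
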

 \begin{proof}
See Lemma \ref{admiss_1}.
\end{proof}

\begin{notation}
	Let $(L_{I_k},E_{I_k},R_{I_k},\delta_{I_k})$ be the AWFS cofibrantly generated by $I_k$ in $\Th_{\Theta_0^\op}$ for all $k\geq 0$. Moreover, let $(R_{k})_{k=0}^\infty$ denote the corresponding fibrant replacement monads on $\Th_{\Theta_0^\op}$ by the AWFSs.
\end{notation}

\noindent We now name and organize the structure data.
\begin{notation}\label{fibr_repl_AWFS}
	Let $(R_{k})_{k=0}^\infty$ be the corresponding fibrant replacement monads on $\Th_{\Theta_0^\op}$ cofibrantly generated by the AWFSs. We write the following notation: $\eta^{k}:1_{\Th_{\Theta_0^\op}}\Rightarrow R_{k}$ and $\mu^{k}:R_{k}^2\Rightarrow R_{k}$ are the unit and multiplication corresponding to the monad $R_{k}$ for all $k\geq 0$, respectively.
\end{notation}

\noindent We shall make use of the following lemma repeatedly. 

\begin{lemma}\label{univ_prop_monad_unit}
	Given a theory $C$, a map $F:C\to D$ of globular theories, and given a choice of lift $\delta_{Ff,Fg}:\vec{p}\to k+1$ in $D$ for the image under $F$ of every admissible pair of the form
	\[
		\begin{tikzpicture}[node distance=2cm]
		\node (A) {$\vec{p}$};
			\node (B)[right of=A]{$k$};
	\draw[transform canvas={yshift=0.3ex},->] (A) to node[above=3] {$f$} (B);
			\draw[transform canvas={yshift=-0.3ex},->,swap](A) to node[below=3] {$g$} (B);
	\end{tikzpicture}
	\]
	in $C$, there is a unique map
	\[
	F':R_kC\to D
	\]
	such that
	\[
	F'\circ\eta^k_C=F
	\]
	and
	\[
	F'(\delta_{f,g})=\delta_{Ff,Fg}
	\]
	for every admissible pair of the form
\[
		\begin{tikzpicture}[node distance=2cm]
		\node (A) {$\vec{p}$};
			\node (B)[right of=A]{$k$};
	\draw[transform canvas={yshift=0.3ex},->] (A) to node[above=3] {$f$} (B);
			\draw[transform canvas={yshift=-0.3ex},->,swap](A) to node[below=3] {$g$} (B);
	\end{tikzpicture}
	\]
	in $C$.
\end{lemma}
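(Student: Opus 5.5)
The plan is to make the fibrant replacement $R_kC$ explicit enough that maps out of it can be written down by hand. By Lemma \ref{admiss_1} the set $I_k$ is admissible for Garner's algebraic small object argument, so $R_kC$ is the colimit of a chain
\[
C=C^{(0)}\to C^{(1)}\to C^{(2)}\to\cdots
\]
in $\Th_{\Theta_0^\op}$. Each step $C^{(m)}\to C^{(m+1)}$ is a pushout of a coproduct of the maps $j_{\vec{p},k}$, indexed by the admissible pairs $(f,g)\colon\vec{p}\rightrightarrows k$ of $C^{(m)}$ that have not yet received a lift. Because $S_{\vec{p},k}$ and $D_{\vec{p},k}$ are defined by freely adjoining $f,g$, respectively $f,g,\delta_{f,g}$, their universal properties say the following. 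A map $S_{\vec{p},k}\to E$ under $\Theta_0^\op$ is exactly an admissible pair in $E$. Extending it along $j_{\vec{p},k}$ is exactly choosing a lift of that pair.

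First I handle the first stage. The pushout property of $C^{(1)}$ gives a unique map $F^{(1)}\colon C^{(1)}\to D$ with $F^{(1)}\circ(C\to C^{(1)})=F$ and $F^{(1)}(\delta_{f,g})=\delta_{Ff,Fg}$ for every admissible pair $(f,g)$ of $C$ at height $k$. This uses exactly the data in the statement: $F$ together with the prescribed lifts. By the preceding lemma, $(Ff,Fg)$ is admissible in $D$, so the prescribed lifts make sense.

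The main obstacle is passing from $C^{(1)}$ to all of $R_kC$ using only lifts for pairs coming from $C$. I would show that the chain is constant from $C^{(1)}$ on in the relevant sense. The claim to prove is that every admissible pair $\vec{p}\rightrightarrows k$ of height $k$ in $C^{(1)}$ either already lies in the image of $C$, or is determined by one, so that Garner's construction adds no genuinely new generator.

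The route is a normal-form argument for morphisms of $C^{(1)}$. A morphism $\vec{q}\to k$ in $C^{(1)}$ is built from morphisms of $C$ and the new generators $\delta_{f,g}\colon\vec{p}\to k+1$. By the commuting triangles $s\circ\delta_{f,g}=f$ and $t\circ\delta_{f,g}=g$, any occurrence of a $\delta$ followed by $s$ or $t$ reduces to a morphism of $C$. Since the generators land in dimension $k+1$, the maps into dimension $k$ that genuinely involve a $\delta$ must pass through some operation $k+1\to\cdots$ of $C$. I will analyse these using the globular-product structure (Lemma \ref{univ_prop_of_tables_of_dim}) to write each such composite as $h\circ\langle\ldots,\delta_{f,g},\ldots\rangle$ with $h$ in $C$. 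I expect this step to need the most care. If the reduction does not collapse completely, the fallback is to use the algebraic nature of Garner's construction: the lifts adjoined at later stages are the canonical ones determined by the free $R_k$-algebra structure, and the value of $F^{(1)}$ on them is then forced by the generators already assigned.

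Once this stabilization is established, existence follows. Set $F'$ equal to the (compatible) extension of $F^{(1)}$ along the colimit. It satisfies $F'\circ\eta^k_C=F$ because $\eta^k_C$ is the colimit inclusion of $C=C^{(0)}$, and $F'(\delta_{f,g})=\delta_{Ff,Fg}$ by construction. Uniqueness follows because $R_kC$ is generated as a globular theory by the image of $\eta^k_C$ together with the adjoined lifts $\delta_{f,g}$. Two maps agreeing on both agree on $C^{(1)}$ by the pushout property, and hence on the whole colimit by the stabilization step.
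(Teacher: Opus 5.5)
The paper states this lemma without proof, so there is no argument of the paper to compare yours against. Judged on its own, your proposal has a genuine gap exactly at the step you flag as the main obstacle, and for an arbitrary theory $C$ that step is false.

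Your first stage is correct: a map out of the pushout $C^{(1)}$ extending $F$ is the same as a choice of lift in $D$ of each pair $(Ff,Fg)$. The stabilization claim, however, says that every admissible pair at height $k$ in $C^{(1)}$ comes from $C$, and this fails once $C$ has an operation $h$ that uses a $(k+1)$-dimensional input for more than its boundary. Your reduction only removes $s\circ\delta_{f,g}$ and $t\circ\delta_{f,g}$. A composite $h\circ\langle\ldots,\delta_{f,g},\ldots\rangle$ with such an $h$ is a genuinely new morphism into $k$.

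Concretely, take $k=0$ and let $C$ be $\Theta_0^{\mathrm{op}}$ with one morphism $u\colon(1)\to(0)$ freely adjoined. Then $u\circ\delta_{\mathrm{id},\mathrm{id}}\colon(0)\to(0)$ is not in the image of $C((0),(0))=\{\mathrm{id}\}$. So $(u\circ\delta_{\mathrm{id},\mathrm{id}},\mathrm{id}_{(0)})$ is an admissible pair of $C^{(1)}$ not coming from $C$, and Garner's construction attaches a fresh lift $\epsilon$ to it at the second stage. (For $k\geq 1$ the same happens with a freely adjoined $u\colon(k+1)\to k$ satisfying $su=ss$ and $tu=ts$.)

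Nothing in the hypotheses controls $\epsilon$. Take $D=R_0C$, $F=\eta^0_C$, with the first-stage lifts prescribed. Then $1_{R_0C}$ satisfies both required equations. So does the map that sends $\epsilon$ to another lift $\delta_{u,s}\circ\delta_{\mathrm{id},\mathrm{id}}$ of the same pair and is extended over the later stages using lifts that $R_0C$ already has. Hence uniqueness fails. Existence is not guaranteed either, since $D$ is not assumed to have lifts for pairs that do not come from $C$.

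Your fallback does not rescue this. The later-stage lifts are free generators of $R_kC$ over $C^{(1)}$, not canonical composites. Their images are forced only for maps of $R_k$-algebras into an $R_k$-algebra, and $D$ is not assumed to be one. Your uniqueness sentence rests on the same false claim: $R_kC$ is generated by $C$ together with the lifts of \emph{all} stages, not only the first.

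What your argument does establish is the stated universal property for the one-step extension $C^{(1)}$, obtained by freely adjoining a lift to each admissible pair of $C$ at height $k$. For the fibrant replacement $R_kC$ itself, the correct universal property is the free-algebra one. If $D$ comes with chosen lifts for \emph{all} of its admissible pairs at height $k$, then $F$ extends uniquely to a map $R_kC\to D$ preserving the chosen lifts. To get the lemma literally as stated for the particular theories where it is applied, such as the $IC_k$, you would first have to prove that for those $C$ no new height-$k$ admissible pairs appear in $C^{(1)}$. That requires controlling how the operations of $C$ act on $(k+1)$-dimensional inputs, which is not possible for arbitrary $C$.
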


The universal property of Lemma \ref{univ_prop_monad_unit} actually determines the entire monad structure. Our next goal will be to show how the fibrant replacement monads induced by the partition forms a completable distributive series of monads. 
\begin{construction} \label{distlaw_comp}
	Given a globular theory $C$, we define a map
	\[
	\lambda^{i,j}_C: R_iR_jC\Rightarrow R_jR_iC
	\]
	
	for $0\leq i<j$ as follows. Define $\lambda^{i,j}_C:R_iR_jC\to R_jR_iC$ to be the unique map such that the diagram
		\[
			\begin{tikzpicture}[node distance=2cm,auto]
				\node (A) {$R_jC$};
				\node (B)[right of=A]{$R_jR_iC$};
				\node (C)[below of=A]{$R_iR_jC$};
				\draw[->,swap] (A) to node {$\eta^i_{R_jC}$} (C);
				\draw[->](A) to node {$R_j\eta^i_C$}(B);
				\draw[->,swap] (C) to node {$\lambda^{i,j}_C$}(B);
			\end{tikzpicture}
		\]
		commutes and 
		\[
		\lambda^{i,j}_C(\delta_{f,g})=\delta_{R_j\eta^i_C(f),R_j\eta^i_C(g)}
		\]
	for every admissible pair of the form 
		\[
		\begin{tikzpicture}[node distance=2cm]
		\node (A) {$\vec{p}$};
			\node (B)[right of=A]{$i$};
	\draw[transform canvas={yshift=0.3ex},->] (A) to node[above=3] {$f$} (B);
			\draw[transform canvas={yshift=-0.3ex},->,swap](A) to node[below=3] {$g$} (B);
	\end{tikzpicture}
	\]
	in $R_j(C)$.
\end{construction}

\begin{lemma}\label{distr_tringle_2}
	For $0\leq i<j$ and every theory $C$, the triangle 
	\[
	\begin{tikzpicture}[node distance=2cm]
		\node (B){$R_iC$};
		\node (C)[below of=B]{};
		\node (D)[left of=C]{$R_iR_jC$};
		\node (E)[right of=C]{$R_jR_iC$};
		\draw[->](B) to node[left=3]{$R_i\eta^j_C$}(D);
		\draw[->](B) to node[right=3]{$\eta^j_{R_iC}$}(E);
		\draw[->](D) to node[below=3]{$\lambda^{i,j}_C$}(E);
	\end{tikzpicture}
	\]
	commutes.
\end{lemma}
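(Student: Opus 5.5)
We need $\lambda^{i,j}_C\circ R_i\eta^j_C=\eta^j_{R_iC}$ as maps $R_iC\to R_jR_iC$. The plan is to use the uniqueness clause of Lemma \ref{univ_prop_monad_unit} for $R_i$ applied to the theory $C$. Both composites are maps out of $R_iC$, so it suffices to show that they agree after precomposition with $\eta^i_C$, and that they send each freely adjoined lift $\delta_{f,g}$ (for $(f,g)$ an admissible pair of height $i$ in $C$) to the same lift in $R_jR_iC$.

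\textbf{Agreement on $C$.} By naturality of $\eta^i$ we have $R_i\eta^j_C\circ\eta^i_C=\eta^i_{R_jC}\circ\eta^j_C$. Construction \ref{distlaw_comp} then gives
\[
\lambda^{i,j}_C\circ R_i\eta^j_C\circ\eta^i_C=\lambda^{i,j}_C\circ\eta^i_{R_jC}\circ\eta^j_C=R_j\eta^i_C\circ\eta^j_C .
\]
By naturality of $\eta^j$, the last expression equals $\eta^j_{R_iC}\circ\eta^i_C$, which is the other composite restricted along $\eta^i_C$.

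\textbf{Agreement on generators.} Let $(f,g)\colon\vec p\rightrightarrows i$ be admissible in $C$. Since $R_i\eta^j_C$ is the map $R_i$ applies to $\eta^j_C$, it sends $\delta_{f,g}$ to $\delta_{\eta^j_C f,\eta^j_C g}$ in $R_iR_jC$. This is the standard description of $R_i$ on morphisms, obtained from Lemma \ref{univ_prop_monad_unit} by taking $F=\eta^i_{R_jC}\circ\eta^j_C$. Construction \ref{distlaw_comp} then sends this lift to $\delta_{R_j\eta^i_C\eta^j_Cf,\,R_j\eta^i_C\eta^j_Cg}$. By the naturality computed above, this equals $\delta_{\eta^j_{R_iC}\eta^i_Cf,\,\eta^j_{R_iC}\eta^i_Cg}$.

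On the other side, $\eta^j_{R_iC}(\delta_{f,g})$ is the image of the lift $\delta_{f,g}$ under the unit. The main obstacle is identifying these two lifts. Both are lifts in $R_jR_iC$ of the same admissible pair $(\eta^j_{R_iC}\eta^i_Cf,\eta^j_{R_iC}\eta^i_Cg)$ of height $i$. However, $R_j$ adjoins only lifts at height $j\neq i$, so there is no a priori reason the freely added cells coincide.

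To resolve this, I would read the clause $\lambda^{i,j}_C(\delta_{f,g})=\delta_{R_j\eta^i_C(f),R_j\eta^i_C(g)}$ in Construction \ref{distlaw_comp} as meaning the image under $R_j\eta^i_C$ of the designated lift $\delta_{f,g}\in R_iC$. In other words, $\lambda$ is defined by sending $\delta_{f,g}$ to $R_j\eta^i_C$ applied to the chosen lift. When $(f,g)$ comes from $C$ along $\eta^j_C$, naturality of $\eta^j$ identifies $R_j\eta^i_C(\eta^j_{R_iC}\ldots)$ appropriately, and the designated lift of $(\eta^i_Cf,\eta^i_Cg)$ in $R_iC$ is exactly $\delta_{f,g}$. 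Its image under $\eta^j_{R_iC}$ is then $\eta^j_{R_iC}(\delta_{f,g})$, so the two sides match.

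With agreement on $C$ and on all generators established, the uniqueness part of Lemma \ref{univ_prop_monad_unit} yields equality of the two maps, and the triangle commutes.
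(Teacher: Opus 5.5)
Your proposal is correct and follows the paper's proof essentially step for step. You check agreement after precomposing with $\eta^i_C$ using the same chain from naturality of $\eta^i$, $\eta^j$ and Construction \ref{distlaw_comp}, then check agreement on the freely added height-$i$ lifts, and conclude by the uniqueness in Lemma \ref{univ_prop_monad_unit}.

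Your generator step makes explicit an identification that the paper simply asserts when it writes $\lambda^{i,j}_C(\delta_{f,g})=\delta_{f,g}=\eta^j_{R_iC}(\delta_{f,g})$. The identification is that $\lambda^{i,j}_C$ sends the lift of a pair coming from $C$ to $\eta^j_{R_iC}$ of the corresponding lift in $R_iC$. When you state this reading, phrase it with $\eta^j_{R_iC}$ throughout: $R_j\eta^i_C$ has domain $R_jC$, so it cannot be applied to the designated lifts of $R_iC$.
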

\begin{proof}
The parts of the following diagram	with equal signs in the middle commute and the outside of the diagram commutes by definition and naturality. 
	\[
	\begin{tikzpicture}[node distance=2cm]
		\node (A){$C$};
		\node (B)[node distance=5cm,left of=A]{$R_iC$};
		\node (C)[below of=B]{};
		\node (D)[left of=C]{$R_iR_jC$};
		\node (E)[right of=C]{$R_jR_iC$};
		\node (F)[below of=C]{$R_jC$};
		\node (G)[node distance=1.5cm,above of=F]{$=$};
		\node (H)[node distance=1cm,right of=E]{$=$};
		\draw[->](A) to node[above=3]{$\eta^i_C$} (B);
		\draw[->,bend left=60](A) to node[right=3]{$\eta^j_C$}(F);
		\draw[->](B) to node[left=3]{$R_i\eta^j_C$}(D);
		\draw[->](B) to node[right=3]{$\eta^j_{R_iC}$}(E);
		\draw[->](D) to node[above=3]{$\lambda^{i,j}_C$}(E);
		\draw[->](F) to node[left=3]{$\eta^i_{R_jC}$}(D);
		\draw[->](F) to node[right=3]{$R_j\eta^i_C$}(E);
	\end{tikzpicture}
	\]
Therefore, we have that
	\[
	\eta^j_{R_iC}\circ \eta^i_C=R_j\eta^i_C\circ \eta^j_C
	\]
	\[
	=\lambda^{i,j}_C\circ\eta^i_{R_jC}\circ\eta^j_C=\lambda^{i,j}_C\circ R_i\eta^j_C\circ \eta^i_C.
	\]
Given an admissible pair
\[
		\begin{tikzpicture}[node distance=2cm]
		\node (A) {$\vec{p}$};
			\node (B)[right of=A]{$i$};
	\draw[transform canvas={yshift=0.3ex},->] (A) to node[above=3] {$f$} (B);
			\draw[transform canvas={yshift=-0.3ex},->,swap](A) to node[below=3] {$g$} (B);
	\end{tikzpicture}
	\]
in $R_i(C)$, we are forced to have that
\[
\lambda^{i,j}_C(R_i(\eta^j_C)(\delta_{f,g}))=\lambda^{i,j}_C(\delta_{f,g})=\delta_{f,g}=\eta^j_{R_i(C)}(\delta_{f,g}).
\]
 By the universal property of Lemma \ref{univ_prop_monad_unit}, the triangle without an equal sign must commute. 
\end{proof}

\begin{lemma}\label{dist_law_ij}
	The maps $\lambda^{i,j}_C:R_iR_jC\to R_jR_iC$ of Lemma \ref{distlaw_comp} form a natural transformation
	\[
	\lambda^{i,j}: R_iR_j\Rightarrow R_jR_i
	\]
	for $0\leq i<j$.
\end{lemma}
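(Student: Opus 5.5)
To prove naturality, fix a map $F:C\to D$ of globular theories. We must show that the square
\[
R_jR_iF\circ\lambda^{i,j}_C=\lambda^{i,j}_D\circ R_iR_jF
\]
commutes as maps $R_iR_jC\to R_jR_iD$. Both composites are maps out of $R_i(R_jC)$. So by the uniqueness clause of Lemma \ref{univ_prop_monad_unit} (applied with $k=i$ to the theory $R_jC$), it suffices to check two things. First, the two composites agree after precomposition with $\eta^i_{R_jC}$. Second, they send each formally added lift $\delta_{f,g}$, for $(f,g)$ an admissible pair at height $i$ in $R_jC$, to the same lift in $R_jR_iD$.

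\emph{Agreement on the unit.} For the left side, the defining triangle of Construction \ref{distlaw_comp} gives
\[
R_jR_iF\circ\lambda^{i,j}_C\circ\eta^i_{R_jC}=R_jR_iF\circ R_j\eta^i_C=R_j(R_iF\circ\eta^i_C).
\]
For the right side, naturality of $\eta^i$ gives $R_iR_jF\circ\eta^i_{R_jC}=\eta^i_{R_jD}\circ R_jF$. Then the defining triangle for $\lambda^{i,j}_D$ turns this into
\[
R_j\eta^i_D\circ R_jF=R_j(\eta^i_D\circ F).
\]
Naturality of $\eta^i$ at $F$ now identifies the two expressions.

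\emph{Agreement on generators.} Let $(f,g)$ be an admissible pair at height $i$ in $R_jC$. By construction, $R_iR_jF$ sends $\delta_{f,g}$ to $\delta_{R_jF(f),R_jF(g)}$; this is the universal-property description of $R_i$ on morphisms. Applying $\lambda^{i,j}_D$ then yields
\[
\delta_{R_j\eta^i_D R_jF(f),\,R_j\eta^i_D R_jF(g)}.
\]
On the other side, $\lambda^{i,j}_C$ sends $\delta_{f,g}$ to $\delta_{R_j\eta^i_C(f),R_j\eta^i_C(g)}$, and $R_jR_iF$ sends this to $\delta_{R_jR_iF R_j\eta^i_C(f),\dots}$. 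Here we use that $R_j$ applied to $R_iF$ preserves the lifts added by $R_j$, and that $R_j$ is a functor. By the same naturality identity as in the unit step, $R_jR_iF\circ R_j\eta^i_C=R_j\eta^i_D\circ R_jF$. Hence both sides equal the same chosen lift.

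\emph{Expected obstacle.} The main subtlety is the second step. Cells of dimension $i+1$ produced by $R_i$ live inside $R_jR_iD$, and we need to know that $R_jR_iF$ acts on the lifts $\delta_{h,k}\in R_jR_iC$ coming from $R_j\eta^i_C$ by sending them to $\delta_{R_jR_iF(h),R_jR_iF(k)}$. This is really a statement about the lifts added at height $i$ and then transported along $R_j\eta^i$, not about the lifts $R_j$ itself adds. I would handle it by observing that $\lambda^{i,j}_C(\delta_{f,g})$ is literally the image under $R_j\eta^i_C$ of the chosen lift in $R_iC$ of the corresponding pair. Then functoriality of $R_j$ together with the description of $R_iF$ on generators from Lemma \ref{univ_prop_monad_unit} gives the required equation.
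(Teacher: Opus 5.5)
Your proof is correct, and its final step is the same as the paper's: both conclude from the uniqueness in Lemma \ref{univ_prop_monad_unit} for $R_i$ at the theory $R_jC$, comparing the two composites after $\eta^i_{R_jC}$ and on the height-$i$ lifts. The difference is in how you get agreement after $\eta^i_{R_jC}$.

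The paper treats that as a second, nested universal-property problem, now for $R_j$ at $C$. It precomposes further with $\eta^j_C$, uses Lemma \ref{distr_tringle_2} twice together with naturality of the units, and then asserts agreement on the height-$j$ lifts of $C$. You instead notice that the defining triangle $\lambda^{i,j}_C\circ\eta^i_{R_jC}=R_j\eta^i_C$ of Construction \ref{distlaw_comp} already evaluates both composites. Naturality of $\eta^i$ and functoriality of $R_j$ then show each equals $R_j\eta^i_D\circ R_jF$. This is shorter and fully rigorous, and it needs neither Lemma \ref{distr_tringle_2} nor any claim about height-$j$ lifts. The paper's detour only buys uniformity with its other ``both ways of sending a lift around agree'' arguments.

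On the generators, the paper simply says the two values are forced to agree. You compute both sides, reducing the question to $R_jR_iF\circ R_j\eta^i_C=R_j\eta^i_D\circ R_jF$ plus the behaviour of $R_jR_iF$ on height-$i$ lifts in $R_jR_iC$. That is the real point the paper leaves implicit.

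There is one slip in your last paragraph. A chosen lift in $R_iC$ is carried into $R_jR_iC$ by $\eta^j_{R_iC}$, not by $R_j\eta^i_C$, whose domain is $R_jC$. The clean version goes as follows. If $f=\eta^j_C(f_0)$ and $g=\eta^j_C(g_0)$, naturality gives $R_j\eta^i_C(f)=\eta^j_{R_iC}(\eta^i_Cf_0)$, so $\lambda^{i,j}_C(\delta_{f,g})=\eta^j_{R_iC}(\delta_{f_0,g_0})$. Then $R_jR_iF\circ\eta^j_{R_iC}=\eta^j_{R_iD}\circ R_iF$ and $R_iF(\delta_{f_0,g_0})=\delta_{Ff_0,Fg_0}$ give $\eta^j_{R_iD}(\delta_{Ff_0,Fg_0})$, which is also the value of the other side.

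This uses that height-$i$ admissible pairs in $R_jC$ come from $C$. That is plausible, since $R_j$ only adds cells of dimension $j+1\geq i+2$, and it is already assumed in Construction \ref{distlaw_comp}. So it is not a weakness of your proof relative to the paper's.
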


\begin{proof}
	Fix $0\leq i<j$ and let $F:C\to D$ be a map. Notice that we have that
	\[
	R_jR_iF\circ\lambda^{i,j}_C\circ \eta^i_{R_jC}\circ \eta^j_C=	R_jR_iF\circ\lambda^{i,j}_C\circ R_i\eta^j_C\circ \eta^i_C
	\]
	\[
	=R_jR_iF\circ \eta^j_{R_iC} \circ \eta^i_C=\eta^j_{R_iD}\circ R_iF\circ \eta^i_C
	\]
	\[
	=\lambda^{i,j}_D\circ R_i\eta^j_D\circ R_iF\circ \eta^i_C=\lambda^{i,j}_D\circ R_iR_jF\circ R_i\eta^j_C \circ \eta^i_C
	\]
	\[
	=\lambda^{i,j}_D\circ R_iR_jF\circ \eta^i_{R_jC} \circ \eta^i_C.
	\]
	\noindent Given an admissible pair 
	\[
		\begin{tikzpicture}[node distance=2cm]
		\node (A) {$\vec{p}$};
			\node (B)[right of=A]{$j$};
	\draw[transform canvas={yshift=0.3ex},->] (A) to node[above=3] {$f$} (B);
			\draw[transform canvas={yshift=-0.3ex},->,swap](A) to node[below=3] {$g$} (B);
	\end{tikzpicture}
	\]
	in $C$, we are forced to have 
	\[
	(R_jR_iF\circ\lambda^{i,j}_C\circ \eta^i_{R_jC})(\delta_{f,g})=(\lambda^{i,j}_D\circ R_iR_jF\circ \eta^i_{R_jC})(\delta_{f,g}).
	\]
	
	By the universal property of Lemma \ref{univ_prop_monad_unit}, we have that
	\[
	R_jR_iF\circ\lambda^{i,j}_C\circ \eta^i_{R_jC}=\lambda^{i,j}_D\circ R_iR_jF\circ \eta^i_{R_jC}.
	\]
Given an admissible pair 
	\[
		\begin{tikzpicture}[node distance=2cm]
		\node (A) {$\vec{p}$};
			\node (B)[right of=A]{$i$};
	\draw[transform canvas={yshift=0.3ex},->] (A) to node[above=3] {$f$} (B);
			\draw[transform canvas={yshift=-0.3ex},->,swap](A) to node[below=3] {$g$} (B);
	\end{tikzpicture}
	\]
	in $R_j(C)$, we are forced to have that
	\[
	(R_jR_iF\circ\lambda^{i,j}_C)(\delta_{f,g})=(\lambda^{i,j}_D\circ R_iR_jF)(\delta_{f,g}).
	\]
	By the universal property of Lemma \ref{univ_prop_monad_unit}, the diagram
	\[
	\begin{tikzpicture}[node distance=2cm,auto]
		\node (A) {$R_iR_jC$};
		\node (B)[right of=A] {$R_jR_iC$};
		\node(C)[below of=A] {$R_iR_jD$};
		\node (D)[right of=C] {$R_jR_iD$};
		\draw[->] (A) to node {$\lambda^{i,j}_C$}(B);
		\draw[->,swap] (A) to node{$R_iR_jF$}(C);
		\draw[->] (B) to node{$R_jR_iF$}(D);
		\draw[->,swap] (C) to node {$\lambda^{i,j}_D$}(D);
	\end{tikzpicture}
	\]
must commute. Therefore, $\lambda^{i,j}$ is a natural transformation for $0\leq i<j$.
\end{proof}

Before we continue onto the next lemma, we remark that all the diagrams that have commuted thus far have commuted because both ways of sending a choice of lift around the appropriate diagrams are the same. This is merely a consequence of the construction of the units of the monads and the rest of the structure being generated by the units. Our diagrams going forward will commute for the exact same reason. 

\begin{lemma}\label{distr_ser_fibr_repl}
	For $0\leq i<j$, $\lambda^{i,j}:R_iR_j\Rightarrow R_jR_i$ is a distributive law.	
\end{lemma}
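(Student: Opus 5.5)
To show that $\lambda^{i,j}$ is a distributive law we must check the four axioms of Definition \ref{mon_over_mon}, with $S=R_i$ and $T=R_j$: two unit triangles and two multiplication pentagons. Naturality is Lemma \ref{dist_law_ij}. Of the unit triangles, the one involving $\eta^i$, namely $\lambda^{i,j}\circ\eta^i_{R_j}=R_j\eta^i$, holds by Construction \ref{distlaw_comp}. The one involving $\eta^j$, namely $\lambda^{i,j}\circ R_i\eta^j=\eta^j_{R_i}$, is Lemma \ref{distr_tringle_2}. So only the two pentagons remain. Both will be proved the same way: by the uniqueness clause of Lemma \ref{univ_prop_monad_unit}. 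We show the two composites agree after precomposing with a unit, and that they send each freely added lift to the same lift.

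For the pentagon involving $\mu^i$, we must show
\[
\lambda^{i,j}\circ\mu^i_{R_j}=R_j\mu^i\circ\lambda^{i,j}_{R_i}\circ R_i\lambda^{i,j}
\]
as maps $R_iR_iR_jC\to R_jR_iC$. The domain is $R_i$ applied to $R_iR_jC$, so by Lemma \ref{univ_prop_monad_unit} a map out of it is determined by two pieces of data: its composite with $\eta^i_{R_iR_jC}$, and its values on the lifts $\delta_{f,g}$ of $i$-dimensional admissible pairs of $R_iR_jC$.
\begin{itemize}
\item Precomposing with $\eta^i_{R_iR_jC}$, the left side becomes $\lambda^{i,j}$ by the monad unit law. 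On the right side, naturality of $\eta^i$ moves it past $R_i\lambda^{i,j}$. Construction \ref{distlaw_comp} then turns $\lambda^{i,j}_{R_iC}\circ\eta^i_{R_jR_iC}$ into $R_j\eta^i_{R_iC}$, and $R_j(\mu^i\circ\eta^i_{R_i})=\mathrm{id}$ leaves $\lambda^{i,j}$.
\item On lifts, both sides send $\delta_{f,g}$ to the lift of the image pair. The reason is that $\mu^i$, $\lambda^{i,j}$ and $R_j\mu^i$ are each defined through Lemma \ref{univ_prop_monad_unit} by sending lifts to lifts of images, and any map of theories preserves admissible pairs.
\end{itemize}

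The pentagon involving $\mu^j$ is dual but harder. Its domain $R_iR_jR_jC$ begins with $R_i$, so the first step is again to precompose with $\eta^i_{R_jR_jC}$ and compare lifts at level $i$. This reduces the identity to one between maps $R_jR_jC\to R_jR_iC$, namely
\[
\lambda^{i,j}\circ R_i\mu^j\circ\eta^i_{R_jR_j}
\quad\text{versus}\quad
\mu^j_{R_i}\circ R_j\lambda^{i,j}\circ\lambda^{i,j}_{R_j}\circ\eta^i_{R_jR_j}.
\]
Their domain is $R_j$ applied to $R_jC$, so a second application of Lemma \ref{univ_prop_monad_unit}, now for $R_j$, reduces this in turn. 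We check agreement after precomposing with $\eta^j_{R_jC}$, which uses Lemma \ref{distr_tringle_2} and the unit laws. We also check agreement on lifts of $j$-dimensional admissible pairs.

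\textbf{Main obstacle.} The delicate point is the $\mu^j$ pentagon, because the lift-matching step requires knowing where $\lambda^{i,j}$ sends cells coming from $R_j$. The construction only prescribes $\lambda^{i,j}$ on $i$-lifts; on the image of $R_j$ it is determined by $\lambda^{i,j}\circ\eta^i_{R_j}=R_j\eta^i$. So we must track a $j$-lift $\delta_{f,g}$ through $R_j\eta^i$ and verify it lands on $\delta_{R_j\eta^i f,\,R_j\eta^i g}$ on both sides. Here the hypothesis $i<j$ guarantees that the $i$-lifts and $j$-lifts are added at different heights and do not interfere, consistent with the remark preceding the statement.
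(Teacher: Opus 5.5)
Your proposal is correct and follows essentially the same route as the paper. The unit triangles come from Construction~\ref{distlaw_comp} and Lemma~\ref{distr_tringle_2}, and each pentagon follows from the uniqueness clause of Lemma~\ref{univ_prop_monad_unit}, by checking agreement after precomposing with a unit and on the freely added lifts; the paper writes out the $\mu^i$ pentagon and handles the $\mu^j$ one ``by a symmetric argument''.

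One simplification in your $\mu^j$ case: the second application of the universal property is unnecessary. After precomposing with $\eta^i_{R_jR_j}$, both sides already equal $R_j\eta^i\circ\mu^j$, by naturality of $\eta^i$ and $\mu^j$ together with $\lambda^{i,j}\circ\eta^i_{R_j}=R_j\eta^i$. So only the check on $i$-lifts remains, and your ``main obstacle'' disappears.
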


\begin{proof}
	By Construction \ref{distlaw_comp} and Lemma \ref{distr_tringle_2}, 
\[
		\begin{tikzpicture}[node distance=2cm,auto]
			\node (A) {$R_jC$};
			\node (B)[right of=A]{$R_jR_iC$};
			\node (C)[below of=A]{$R_iR_jC$};
			\draw[->,swap] (A) to node {$\eta^i_{R_jC}$} (C);
			\draw[->](A) to node {$R_j\eta^i_C$}(B);
			\draw[->,swap] (C) to node {$\lambda^{i,j}_C$}(B);
		\end{tikzpicture}	
\]
	and 
	\[
	\begin{tikzpicture}[node distance=2cm]
		\node (B){$R_iC$};
		\node (C)[below of=B]{};
		\node (D)[left of=C]{$R_iR_jC$};
		\node (E)[right of=C]{$R_jR_iC$};
		\draw[->](B) to node[left=3]{$R_i\eta^j_C$}(D);
		\draw[->](B) to node[right=3]{$\eta^j_{R_iC}$}(E);
		\draw[->](D) to node[below=3]{$\lambda^{i,j}_C$}(E);
	\end{tikzpicture}
	\]
	both commute for every theory $C$. 
	
	\noindent We now diagram chase and obtain the equality
	\[
	\lambda^{i,j}\circ \mu^i_{R_j}\circ \eta^i_{R_iR_j}\circ\eta^i_{R_j}=\lambda^{i,j}\circ\eta^i_{R_j}=R_j\eta^i=R_j\mu^i\circ R_j\eta^i_{R_i}\circ R_j\eta^i
	\]
	\[
	=R_j\mu^i\circ\lambda^{i,j}_{R_i}\circ \eta^i_{R_jR_i}\circ R_j\eta^i=R_j\mu^i\circ\lambda^{i,j}_{R_i}\circ R_iR_j\eta^i\circ\eta^i_{R_j}
	\]
	\[
	=R_j\mu^i\circ\lambda^{i,j}_{R_i}\circ R_i\lambda^{i,j}\circ R_i\eta^i_{R_j}\circ\eta^i_{R_j}=R_j\mu^i\circ\lambda^{i,j}_{R_i}\circ R_i\lambda^{i,j}\circ \eta^i_{R_iR_j}\circ\eta^i_{R_j}
	\]
	of natural transformations. Let $C$ be a theory and let 
	\[
		\begin{tikzpicture}[node distance=2cm]
		\node (A) {$\vec{p}$};
			\node (B)[right of=A]{$i$};
	\draw[transform canvas={yshift=0.3ex},->] (A) to node[above=3] {$f$} (B);
			\draw[transform canvas={yshift=-0.3ex},->,swap](A) to node[below=3] {$g$} (B);
	\end{tikzpicture}
	\]
	be an admissible pair in $R_j(C)$. We are forced to have that
	\[
	(\lambda^{i,j}\circ \mu^i_{R_j}\circ \eta^i_{R_iR_j})_C(\delta_{f,g})=(R_j\mu^i\circ\lambda^{i,j}_{R_i}\circ R_i\lambda^{i,j}\circ \eta^i_{R_iR_j})_C(\delta_{f,g}).
	\]
	The universal property of Lemma \ref{univ_prop_monad_unit} forces
	\[
	(\lambda^{i,j}\circ \mu^i_{R_j}\circ \eta^i_{R_iR_j})_C=(R_j\mu^i\circ\lambda^{i,j}_{R_i}\circ R_i\lambda^{i,j}\circ \eta^i_{R_iR_j})_C, 
	\]
	so that 
	\[
		\lambda^{i,j}\circ \mu^i_{R_j}\circ \eta^i_{R_iR_j}=R_j\mu^i\circ\lambda^{i,j}_{R_i}\circ R_i\lambda^{i,j}\circ \eta^i_{R_iR_j}.
	\]
	Similarly, the diagram
	\[
	\begin{tikzpicture}[node distance=2cm]
		\node (E)[below of=D]{$R_i^2R_j$};
		\node (F)[right of=E]{$R_iR_j$};
		\node (G)[right of=F]{$R_jR_i$};
		\node (H)[below of=E]{$R_iR_jR_i$};
		\node (I)[node distance=4cm,right of=H,swap]{$R_jR_i^2$};
		
		\draw[->](E) to node[above=3]{$\mu^i_{R_j}$}(F);
		\draw[->](F) to node[above=3]{$\lambda^{i,j}$}(G);
		\draw[->,swap] (E) to node[left=3]{$R_i\lambda^{i,j}$}(H);
		\draw[->,swap] (H) to node[below=3]{$\lambda^{i,j}_{R_i}$}(I);
		\draw[->](I) to node[right =3]{$R_j\mu^i$}(G);
	\end{tikzpicture}
	\]
commutes by the universal property of Lemma \ref{univ_prop_monad_unit}.  By a symmetric argument, the rectangle 
	\[
	\begin{tikzpicture}[node distance=2cm]
		\node (A) {$R_iR_j^2$};
		\node (B)[below of =A]{$R_iR_j$};
		\node (F) [right of=A]{$R_jR_iR_j$};
		\node (G) [right of=F]{$R_j^2R_i$};
		\node (H) [below of=G]{$R_jR_i$};
		\draw[->,swap] (A) to node[black,left=3] {$R_i\mu^j$}(B);
		\draw[->] (A) to node [black,above=3] {$\lambda^{i,j}_{R_j}$}(F);
		\draw[->] (F) to node[black,above=3] {$R_j\lambda^{i,j}$}(G);
		\draw[->](G) to node[black,right=3] {$\mu^jR_i$}(H);
		\draw[->](B) to node[black,below=3] {$\lambda^{i,j}$}(H);
	\end{tikzpicture}
	\]
	commutes. Thus, we have proven that $\lambda^{i,j}:R_iR_j\Rightarrow R_jR_i$ is a distributive law.
\end{proof}

\begin{lemma}\label{yang_baxt_gpds}
	The Yang-Baxter equation 
	
	\[
	\begin{tikzpicture}[node distance=1.5cm]
		\node (A){$R_iR_jR_k$};
		\node (B)[right of=A]{};
		\node (C)[above of =B]{$R_jR_iR_k$};
		\node (D)[below of =B]{$R_iR_kR_j$};
		\node (E)[right of=C,node distance=3cm]{$R_jR_kR_i$};
		\node (F)[right of=D,node distance=3cm]{$R_kR_iR_j$};
		\node (G)[right of=B]{};
		\node (H)[right of =G,node distance=3cm]{$R_kR_jR_i$};
		\draw[->](A) to node[left=3]{$\lambda^{i,j}R_k$}(C);
		\draw[->,swap](A) to node[left=3]{$R_i\lambda^{j,k}$}(D);
		\draw[->](C)to node[above=3]{$R_j\lambda^{i,k}$}(E);
		\draw[->,swap](D) to node[below=3]{$\lambda^{i,k}R_j$}(F);
		\draw[->](E) to node[right=3]{$\lambda^{j,k}R_i$}(H);
		\draw[->,swap](F) to node[right=3]{$R_k\lambda^{i,j}$}(H);
	\end{tikzpicture}
	\]
	
	is satisfied for $0\leq i<j<k$.
\end{lemma}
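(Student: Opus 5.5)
We prove the Yang--Baxter equation the same way as the previous lemmas. We show that the two composites
\[
\alpha:=\lambda^{j,k}_{R_i}\circ R_j\lambda^{i,k}\circ\lambda^{i,j}_{R_k},\qquad \beta:=R_k\lambda^{i,j}\circ\lambda^{i,k}_{R_j}\circ R_i\lambda^{j,k}
\]
from $R_iR_jR_k$ to $R_kR_jR_i$ agree. Fix a theory $C$. By Lemma \ref{univ_prop_monad_unit} applied to the monad $R_i$ at the theory $R_jR_kC$, it suffices to check two things. First, $\alpha_C\circ\eta^i_{R_jR_kC}=\beta_C\circ\eta^i_{R_jR_kC}$. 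Second, $\alpha_C$ and $\beta_C$ agree on every freely adjoined lift $\delta_{f,g}$, where $(f,g)$ is an admissible pair of dimension $i$ in $R_jR_kC$.

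\textbf{The lifts.} For $\alpha$, the map $\lambda^{i,j}_{R_kC}$ sends $\delta_{f,g}$ to $\delta_{R_j\eta^i(f),R_j\eta^i(g)}$ by Construction \ref{distlaw_comp}. The next two maps $R_j\lambda^{i,k}$ and $\lambda^{j,k}_{R_i}$ are morphisms of theories that fix the chosen dimension-$i$ lifts up to the image of the pair. So the final value is $\delta_{\phi f,\phi g}$, where $\phi=\alpha_C\circ\eta^i_{R_jR_kC}$. The same reasoning for $\beta$ gives $\delta_{\psi f,\psi g}$ with $\psi=\beta_C\circ\eta^i_{R_jR_kC}$. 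Hence the lift condition follows once $\phi=\psi$, so everything reduces to the unit condition.

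\textbf{Peeling off the units.} Using Construction \ref{distlaw_comp} and Lemma \ref{distr_tringle_2} repeatedly together with naturality (Lemma \ref{dist_law_ij}), we slide $\eta^i$ through both composites. For $\alpha$, first $\lambda^{i,j}_{R_k}\circ\eta^i_{R_jR_k}=R_j\eta^i_{R_k}$. Then $R_j\lambda^{i,k}\circ R_j\eta^i_{R_k}=R_jR_k\eta^i$, and naturality gives $\lambda^{j,k}_{R_i}\circ R_jR_k\eta^i=R_kR_j\eta^i\circ\lambda^{j,k}$. For $\beta$, naturality of $\eta^i$ gives $R_i\lambda^{j,k}\circ\eta^i_{R_jR_k}=\eta^i_{R_kR_j}\circ\lambda^{j,k}$. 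Then $\lambda^{i,k}_{R_j}\circ\eta^i_{R_kR_j}=R_k\eta^i_{R_j}$ and $R_k\lambda^{i,j}\circ R_k\eta^i_{R_j}=R_kR_j\eta^i$. So both restrictions equal $R_kR_j\eta^i\circ\lambda^{j,k}$, which establishes the unit condition.

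\textbf{Main obstacle.} The step needing the most care is the lift claim. We must check that $R_j\lambda^{i,k}$ and $\lambda^{j,k}_{R_i}$ really carry a chosen $i$-lift $\delta_{u,v}$ to the chosen lift $\delta$ of the image pair, including lifts that lie inside an applied functor such as $R_j(\,\cdot\,)$.
\begin{itemize}
\item For $R_j\lambda^{i,k}$, this holds because $R_j$ applied to any theory map preserves the $R_k$- and $R_i$-lifts already present.
\item For $\lambda^{j,k}$, its defining property only prescribes the $j$-lifts. On the $i$-lifts inside $R_jR_kR_iC$ we instead use the unit triangle. Those lifts come from the image of $\eta^j_{R_kR_iC}\circ\eta^k_{R_iC}$, and on that image $\lambda^{j,k}$ acts as $R_k\eta^j$ by Lemma \ref{distr_tringle_2}, which preserves them.
\end{itemize}
If this bookkeeping becomes unwieldy, we fall back on the remark preceding Lemma \ref{distr_ser_fibr_repl}. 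Every map involved is a theory morphism determined by where it sends units and chosen lifts, and each generator is sent along either path to the same generator of $R_kR_jR_iC$, namely $\delta$ of the image pair.
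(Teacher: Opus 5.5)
Your argument is the paper's proof. The same chain of equalities shows both composites restrict along $\eta^i_{R_jR_k}$ to $R_kR_j\eta^i\circ\lambda^{j,k}$, and then agreement on the freely added $i$-lifts plus Lemma \ref{univ_prop_monad_unit} finishes it; the paper simply declares that agreement ``forced''. Your extra bookkeeping goes further than the paper, but two details in your second bullet need correcting. First, the triangle $\lambda^{j,k}\circ\eta^j_{R_k}=R_k\eta^j$ comes from Construction \ref{distlaw_comp}, not Lemma \ref{distr_tringle_2}. Second, the $i$-lift that actually reaches $\lambda^{j,k}_{R_iC}$ is for the pair $(R_jR_k\eta^i_C f,R_jR_k\eta^i_C g)$, which need not come from $R_iC$. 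So that bullet only handles a special case, of a point the paper also leaves implicit.
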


\begin{proof}
Notice that we have 
	\[
	\lambda^{j,k}_{R_i}\circ R_j\lambda^{i,k}\circ \lambda^{i,j}_{R_k}\circ \eta^i_{R_jR_k}=\lambda^{j,k}_{R_i}\circ R_j\lambda^{i,k}\circ R_j\eta^i_{R_k}
	\]
	\[
	=\lambda^{j,k}_{R_i}\circ R_jR_k\eta^i=R_kR_j\eta^i\circ \lambda^{j,k}
	\]
	\[
	=R_k\lambda^{i,j}\circ R_k\eta^i_{R_j} \circ \lambda^{j,k}=R_k\lambda^{i,j}\circ \lambda^{i,k}_{R_j}\circ \eta^i_{R_kR_j} \circ \lambda^{j,k}
	\]
	\[
	=R_k\lambda^{i,j}\circ \lambda^{i,k}_{R_j}\circ R_i\lambda^{j,k} \circ \eta^i_{R_jR_k}.
	\]
	Let $C$ be a theory and \[
		\begin{tikzpicture}[node distance=2cm]
		\node (A) {$\vec{p}$};
			\node (B)[right of=A]{$i$};
	\draw[transform canvas={yshift=0.3ex},->] (A) to node[above=3] {$f$} (B);
			\draw[transform canvas={yshift=-0.3ex},->,swap](A) to node[below=3] {$g$} (B);
	\end{tikzpicture}
	\]
	be an admissible pair in $R_j(R_k(C))$. We are forced to have that 
\[
(\lambda^{j,k}_{R_i}\circ R_j\lambda^{i,k}\circ \lambda^{i,j}_{R_k})_C(\delta_{f,g})=(R_k\lambda^{i,j}\circ \lambda^{i,k}_{R_j}\circ R_i\lambda^{j,k})_C(\delta_{f,g}).
\]
 Upon application of the universal property of Lemma \ref{univ_prop_monad_unit}, the diagram
	\[
	\begin{tikzpicture}[node distance=1.5cm]
		\node (A){$R_iR_jR_k$};
		\node (B)[right of=A]{};
		\node (C)[above of =B]{$R_jR_iR_k$};
		\node (D)[below of =B]{$R_iR_kR_j$};
		\node (E)[right of=C,node distance=3cm]{$R_jR_kR_i$};
		\node (F)[right of=D,node distance=3cm]{$R_kR_iR_j$};
		\node (G)[right of=B]{};
		\node (H)[right of =G,node distance=3cm]{$R_kR_jR_i$};
		\draw[->](A) to node[left=3]{$\lambda^{i,j}R_k$}(C);
		\draw[->,swap](A) to node[left=3]{$R_i\lambda^{j,k}$}(D);
		\draw[->](C)to node[above=3]{$R_j\lambda^{i,k}$}(E);
		\draw[->,swap](D) to node[below=3]{$\lambda^{i,k}R_j$}(F);
		\draw[->](E) to node[right=3]{$\lambda^{j,k}R_i$}(H);
		\draw[->,swap](F) to node[right=3]{$R_k\lambda^{i,j}$}(H);
	\end{tikzpicture}
	\] 
	must commute.
\end{proof}

\begin{theorem}\label{distr_ser_AWFS}
	The fibrant replacement monads of Notation \ref{fibr_repl_AWFS} and natural transformations $\lambda^{i,j}$ of Construction \ref{distlaw_comp} form a distributive series of monads.
	\[
	R:=((R_{k})_{k=0}^\infty,(\lambda^{i,j})_{j>i\geq 0})
	\]
\end{theorem}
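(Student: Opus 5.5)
The theorem is essentially an assembly of the preceding lemmas against Definition \ref{dist_ser_monads_extended}, so the plan is to check the three ingredients the definition demands, in order.

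First, each $R_k$ must be a monad on $\Th_{\Theta_0^\op}$. Here I take $(R_k,\eta^k,\mu^k)$ to be the fibrant replacement monad of the AWFS $(L_{I_k},E_{I_k},R_{I_k},\delta_{I_k})$. This AWFS exists because $I_k$ is admissible for Garner's small object argument. Its right functor, restricted to objects over the initial theory (equivalently, its action on a theory $C$ via $C\to R_kC$), carries a monad structure.

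Second, for each pair $i<j$ we need a distributive law $\lambda^{i,j}:R_iR_j\Rightarrow R_jR_i$. Lemma \ref{dist_law_ij} shows that the maps of Construction \ref{distlaw_comp} assemble into a natural transformation. Lemma \ref{distr_ser_fibr_repl} then supplies the four axioms of Definition \ref{mon_over_mon}: the two unit triangles (Construction \ref{distlaw_comp} and Lemma \ref{distr_tringle_2}) and the two multiplication pentagons.

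Third, for $i<j<k$ the hexagon of Definition \ref{dist_ser_monads_extended} must commute. This is exactly Lemma \ref{yang_baxt_gpds}.

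Two small points need attention. The indexing convention in the definition writes $\lambda^{i,j}:T_iT_j\Rightarrow T_jT_i$ for $j>i$, and this matches our $\lambda^{i,j}:R_iR_j\Rightarrow R_jR_i$ for $i<j$, even though the notation $(\lambda_{i,j})_{i>j}$ in the tuple is inconsistent. Beyond that, we only need the family to be indexed by all pairs $j>i\geq 0$, with no extra compatibility conditions. Having verified the three items, I would conclude by writing $R:=((R_k)_{k=0}^\infty,(\lambda^{i,j})_{j>i\geq0})$.

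The main obstacle is that the earlier lemmas lean on Lemma \ref{univ_prop_monad_unit}, and that lemma only determines maps out of $R_kC$ by their values on $C$ and on the new lifts $\delta_{f,g}$. Where a lemma was argued only on one precomposition, I would add a check: a map out of $R_iR_jR_kC$ is determined by iterating the universal property three times, once for each layer of freely added lifts. Concretely, two maps agreeing after precomposition with $\eta^i_{R_jR_k}$ and on all $\delta_{f,g}$ at height $i$ are equal. Agreement after precomposition with $\eta^i_{R_jR_k}$ is in turn reduced the same way through $R_jR_kC$ and $R_kC$. Granting that the earlier lemmas are correct as stated, the theorem follows immediately.
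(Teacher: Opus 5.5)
Your proposal is correct and is the paper's argument: the proof there simply cites Lemma \ref{dist_law_ij}, Lemma \ref{distr_ser_fibr_repl}, and Lemma \ref{yang_baxt_gpds} as supplying the naturality, the distributive-law axioms, and the Yang--Baxter hexagon required by Definition \ref{dist_ser_monads_extended}. Your extra remarks (each $R_k$ being a monad via the AWFS, the index convention, and iterating Lemma \ref{univ_prop_monad_unit} layer by layer) are consistent with how those lemmas are proved. The one wording slip is that the fibrant replacement monad comes from restricting the AWFS to maps into the terminal theory, not to objects over the initial one.
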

\begin{proof} 
Lemma \ref{dist_law_ij}, Lemma \ref{distr_ser_fibr_repl}, and Lemma \ref{yang_baxt_gpds} proves that 
\[
	R:=((R_{k})_{k=0}^\infty,(\lambda^{i,j})_{j>i\geq 0})
	\]
    constitutes a distributive series of monads.
\end{proof}

Let $(\hat{R},\hat{\eta})$ be the associated pointed endofunctor of the distributive series of Theorem \ref{distr_ser_AWFS} (see Definition \ref{ind_funct_on_dist}), which exists since $\Th_{\Theta_0^\op}$ is cocomplete. We now show the distributive series is completable. 

\begin{theorem}\label{ind_monad_AWFS}
	The structure $(\hat{R},\hat{\eta})$ extends to a monad structure on $\Th_{\Theta_0^\op}$. Moreover, the distributive series is completable. 
\end{theorem}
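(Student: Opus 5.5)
The plan is to build the multiplication $\hat{\mu}:\hat{R}^2\Rightarrow\hat{R}$ as a colimit of the finite multiplications $\hat{\mu}_n$ on the $n$th associated monads $\hat{R}_n=R_nR_{n-1}\cdots R_0$ from Theorem \ref{thmChengReIm_corr}. The monad axioms will then be checked by reducing to finite stages.

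The first step is to show the connecting maps of the defining diagram are monad morphisms. These are the maps $\iota_n=\eta^{n+1}_{\hat{R}_n}:\hat{R}_n\Rightarrow\hat{R}_{n+1}$. Compatibility with units is immediate. Compatibility with multiplication, i.e.\ $\iota_n\circ\hat{\mu}_n=\hat{\mu}_{n+1}\circ(\iota_n\iota_n)$, follows from the unit triangles of the distributive laws $\lambda^{i,n+1}$, namely Construction \ref{distlaw_comp} and Lemma \ref{distr_tringle_2}. Concretely, $\hat{\mu}_{n+1}$ is $R_{n+1}\hat{\mu}_n$ composed with $\mu^{n+1}$ and the composite distributive law of $\hat{R}_n$ over $R_{n+1}$. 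Precomposing with units of $R_{n+1}$ collapses these factors. Alternatively, one can argue as the paper does elsewhere: check equality after precomposing with units, and check on chosen lifts $\delta_{f,g}$, then apply Lemma \ref{univ_prop_monad_unit}.

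The second step, which I expect to be the main obstacle, is to identify $\hat{R}^2$ with $\operatorname{colim}_n \hat{R}_n\hat{R}_n$. Write $\hat{R}\hat{R}C=\operatorname{colim}_m\hat{R}_m(\operatorname{colim}_n\hat{R}_nC)$. I would prove each $R_k$, and hence each $\hat{R}_m$, preserves colimits of $\omega$-chains (filtered colimits) in $\Th_{\Theta_0^\op}$. The reason is that the generators $S_{\vec{p},k}$ and $D_{\vec{p},k}$ are finitely presentable; this is Lemma \ref{admiss_1} together with their explicit finite presentations. So the algebraic small object argument of \cite{Garn2} produces a fibrant replacement monad that is finitary. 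Concretely, a lift in $R_k(\operatorname{colim} C_n)$ for an admissible pair at height $k$ involves only finitely many arrows, and these already live at some finite stage. Granting finitarity, we get $\hat{R}\hat{R}C\cong\operatorname{colim}_{m,n}\hat{R}_m\hat{R}_nC$. Cofinality of the diagonal in $\omega\times\omega$ then gives $\hat{R}^2\cong\operatorname{colim}_n\hat{R}_n\hat{R}_n$.

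Next, define $\hat{\mu}$ as the colimit of the $\hat{\mu}_n$. Step one shows these form a cocone, so $\hat{\mu}$ is well defined. The monad laws are then verified componentwise on the colimit. The unit laws reduce to $\hat{\mu}_n\circ\hat{\eta}_n\hat{R}_n=1=\hat{\mu}_n\circ\hat{R}_n\hat{\eta}_n$ at each stage, because $\hat{\eta}$ is the colimit of the $\hat{\eta}_n$ by construction. Associativity uses the same finitarity, giving $\hat{R}^3\cong\operatorname{colim}_n\hat{R}_n^3$. It then reduces to associativity of each $\hat{\mu}_n$, which is supplied by Theorem \ref{thmChengReIm_corr}. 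This gives the monad $(\hat{R},\hat{\eta},\hat{\mu})$, so the distributive series is completable in the sense of Definition \ref{def_of_compl_distr_ser_of_monads}.

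If finitarity turns out to be awkward to extract from \cite{Garn2}, there is a fallback. Describe $\hat{R}C$ directly as the theory obtained from $C$ by freely adding, height by height, lifts for all admissible pairs. Then define $\hat{\mu}_C:\hat{R}\hat{R}C\to\hat{R}C$ by the universal property of Lemma \ref{univ_prop_monad_unit}, applied at each height $k$: send each formal lift to the corresponding chosen lift in $\hat{R}C$. Verify the laws by the usual uniqueness argument on units and lifts.
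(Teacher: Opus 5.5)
Your argument is correct, but your main route differs from the paper's; your fallback is essentially the paper's proof.

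The paper never interchanges colimits. Since colimits of endofunctors are computed pointwise, $\hat R(\hat R C)$ is by definition $\operatorname{colim}_m \hat R_m(\hat R C)$ with $\hat R C$ held fixed. The paper builds maps $R_m\cdots R_0\hat R\Rightarrow\hat R$ inductively from Lemma \ref{univ_prop_monad_unit}: extend the previous stage along $\eta^m$, and send each formal height-$m$ lift to the chosen lift in $\hat R C$. It then passes to the colimit. Both unit laws and associativity are checked by the same uniqueness principle: the two composites agree after precomposing with units, and they agree on formal lifts.

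You instead exhibit $\hat R$ as the colimit of the finite composites $\hat R_n$ in the category of monads. This rests on two inputs.
\begin{itemize}
\item Beck's general fact that $\eta^T S\colon S\Rightarrow TS$ is a monad morphism for any distributive law $ST\Rightarrow TS$. Here only the triangle of Lemma \ref{distr_tringle_2}, iterated over $i\le n$, is actually needed, not the one in the construction of $\lambda^{i,j}$.
\item Finitarity of the $R_k$, which together with the cofinal diagonal gives $\hat R^2\cong\operatorname{colim}_n\hat R_n\hat R_n$ and $\hat R^3\cong\operatorname{colim}_n\hat R_n^3$.
\end{itemize}

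This buys you two things. The monad axioms are inherited wholesale from the $\hat R_n$ of Theorem \ref{thmChengReIm_corr}, with no further lift-chasing. You also get the sharper conclusion that each coprojection $\hat R_n\Rightarrow\hat R$ is a monad morphism, i.e.\ $\hat R$ is the colimit of the $\hat R_n$ as monads.

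The price is the finitarity step, which the paper's argument never needs. Lemma \ref{admiss_1} only asserts presentability, so you must add several facts:
\begin{itemize}
\item $\Th_{\Theta_0^\op}$ is locally finitely presentable;
\item $S_{\vec p,k}$ and $D_{\vec p,k}$ are finitely presentable;
\item Garner's construction is then finitary;
\item $C\mapsto(C\to 1)$ preserves filtered colimits.
\end{itemize}
All of these are true, but they form an extra layer of argument.

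Conversely, the paper's route is finiteness-free and stays inside the lift-theoretic universal property. The cost is that each monad law must be re-verified stage by stage. It also needs $\hat R C$ to supply the chosen lifts that the universal property is asked to hit.
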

\begin{proof}
	We shall assign a multiplication $\hat{\mu}:\hat{R}^2\Rightarrow\hat{R}$. Define $\hat{\mu}_0:\hat{R}\to\hat{R}$ to be $\hat{\mu}_0=1_{\hat{R}}$. Suppose $C$ is a theory and that 
	\[
		\begin{tikzpicture}[node distance=2cm]
		\node (A) {$\vec{p}$};
			\node (B)[right of=A]{$0$};
	\draw[transform canvas={yshift=0.3ex},->] (A) to node[above=3] {$f$} (B);
			\draw[transform canvas={yshift=-0.3ex},->,swap](A) to node[below=3] {$g$} (B);
	\end{tikzpicture}
	\]
	is an admissible pair in $\hat{R}(C)$. There is a unique map $\hat{\mu}_1:R_0\hat{R}\to\hat{R}$ where we send the choice of lift to the admissible pair above in $R_0(\hat{R}(C))$ added along $\eta^0_{\hat{R}C}$ to the choice of lift in $\hat{R}(C)$ and such that the diagram
	\[
	\begin{tikzpicture}[node distance=3cm]
		\node (A) {$\hat{R}(C)$};
		\node (B)[below of=A]{$R_0(\hat{R}(C))$};
		\node (E) [right of=A,node distance=2cm]{$\hat{R}(C)$};
		\draw[->,swap](A) to node[black,left=3] {$\eta^0_{\hat{R}(C)}$} (B);
		\draw[->] (A) to node[black, above=3]{$1_{\hat{R}(C)}$} (E);
		\draw[->,dashed] (B) to node[right=5] {$(\hat{\mu}_1)_C$} (E);
	\end{tikzpicture}
	\]
commutes by the universal property of Lemma \ref{univ_prop_monad_unit}. Moreover, $\hat{\mu}_1$ assembles into a natural transformation. We repeat this process inductively to obtain a diagram of natural transformations.
	\[
	\begin{tikzpicture}[node distance=2cm]
		\node (A) {$\hat{R}$};
		\node (B)[below of=A]{$R_0\hat{R}$};
		\node (C)[below of=B]{$R_1R_0\hat{R}$};
		\node (D) [below of=C]{$\vdots$};
		\node (E) [right of=A,node distance=2cm]{$\hat{R}$};
		\draw[->,swap](A) to node[black,left=3] {$\eta^0_{\hat{R}}$} (B);
		\draw[->,swap](B) to node[black,left=3] {$\eta^1_{R_0\hat{R}}$} (C);
		\draw[->,swap](C) to node[black, left=3] {$\eta^2_{R_1R_0\hat{R}}$} (D);
		\draw[->] (A) to node[black, above=3]{$1_{\hat{R}}$} (E);
		\draw[->,dashed] (B) to node[right=3] {$\hat{\mu}_1$} (E);
		\draw[->,dashed,bend right=30] (C) to node[right=3] {$\hat{\mu}_2$} (E);
	\end{tikzpicture}
	\]
Upon taking colimits, we induce a unique natural transformation $\hat{\mu}:\hat{R}^2\Rightarrow\hat{R}$ such that the following diagram commutes.
	\[
	\begin{tikzpicture}[node distance=2cm]
		\node (A) {$\hat{R}$};
		\node (B)[below of=A]{$\hat{R}^2$};
		\node (E) [right of=A,node distance=2cm]{$\hat{R}$};
		\draw[->,swap](A) to node[black,left=3] {$\hat{\eta}_{\hat{R}}$} (B);
		\draw[->] (A) to node[black, above=3]{$1_{\hat{R}}$} (E);
		\draw[->,dashed] (B) to node[right=5] {$\hat{\mu}$} (E);
	\end{tikzpicture}
	\]
Notice that
	\[
	\hat{\mu}\circ \hat{\eta}_{\hat{R}}\circ \hat{\eta}=\hat{\mu}\circ \hat{R}\hat{\eta}\circ \hat{\eta}.
	\]
	By construction and inductive use of the universal property of Lemma \ref{univ_prop_monad_unit}, we must have that 
	\[
	\hat{\mu}\circ \hat{R}\hat{\eta}=\hat{\mu}\circ \hat{\eta}_{\hat{R}}=1_{\hat{R}}.
	\]
Similarly, the equation
	\[
	\hat{\mu}\circ \hat{R}\hat{\mu}\circ \hat{\eta}_{\hat{R}^2}=\hat{\mu}\circ \hat{\eta}_{\hat{R}}\circ \hat{\mu}
	\]
	\[
	=\hat{\mu}=\hat{\mu}\circ \hat{\mu}_{\hat{R}}\circ \hat{\eta}_{\hat{R}^2}
	\]
is forced to hold for the same reason.  By construction and inductive use of the universal property (see Lemma \ref{univ_prop_monad_unit}), we have that 
	\[
	\hat{\mu}\circ \hat{R}\hat{\mu}=
	\hat{\mu}\circ \hat{\mu}_{\hat{R}}.
	\]
	Therefore $(\hat{R},\hat{\eta},\hat{\mu})$ is a monad. Therefore the distributive series is completable as required.
\end{proof}

We made mention of the inductive use of the universal property. We can do this because $\hat{\eta}$ is the colimit of the diagram
\[
1_{\Th_{\Theta_0^\op}}\xrightarrow{\eta^0} R_0\xrightarrow{\eta^1_{R_0}} R_1R_0\xrightarrow{\eta^2_{R_1R_0}}\cdots
\]
in the category of endofunctors on $\Th_{\Theta_0^\op}$ and because we argue dimension-by-dimension to show that both ways of sending the appropriate choice of lift added around the required diagram are in fact the same. 
 
\begin{definition}\label{our_sp_choice}
	The \emph{inductive $(\infty,0)$-coherator} 
	\[
	J^{IC}:\Theta_0^\op\to IC
	\]
	is defined to be
	\[
	J^{IC}:=\widehat{R}(\id_{\Theta_0^\op}).
	\]
\end{definition}

\begin{corollary} \label{our_sp_choice_proof}
The theory $J^{IC}:\Theta_0^\op\to IC$ of Definition \ref{our_sp_choice} is an $(\infty,0)$-coherator.
\end{corollary}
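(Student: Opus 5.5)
To prove that $J^{IC}:\Theta_0^\op\to IC$ is an $(\infty,0)$-coherator, I will check the two requirements of Definition \ref{eq:8}: a tower of theories $C_0=\Theta_0^\op\to C_1\to\cdots$ with colimit $IC$, each step freely adding lifts to a set of admissible pairs; and contractibility of $IC$. By Definition \ref{our_sp_choice} and Definition \ref{ind_funct_on_dist}, $IC=\hat R(\Theta_0^\op)$ is the colimit of
\[
\Theta_0^\op\xrightarrow{\eta^0}R_0\Theta_0^\op\xrightarrow{\eta^1}R_1R_0\Theta_0^\op\xrightarrow{\eta^2}\cdots,
\]
since colimits of endofunctors are computed pointwise. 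I will use this as the skeleton of the tower.

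For the tower, each step $\eta^k_{C}:C\to R_kC$ (with $C=R_{k-1}\cdots R_0\Theta_0^\op$) is not a single free addition of lifts. The fibrant replacement of the AWFS cofibrantly generated by $I_k$ is built by Garner's small object argument as a transfinite (here $\omega$-indexed, since the domains are finitely presentable) composite. Each stage pushes out the coproduct of the $j_{\vec p,k}$ along all admissible pairs of height $k$ currently present. By Lemma \ref{univ_prop_monad_unit}, each such pushout is exactly "freely add a lift $\delta_{f,g}$ to each pair in a set $U$ of admissible pairs of dimension $k$". So I will refine the tower by inserting these intermediate stages, which gives an $\omega\cdot\omega$-indexed chain. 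Then I reindex it as an $\omega$-chain (for instance by a diagonal enumeration, or by noting that the coherator definition only needs some countable sequence of free lift additions) with the same colimit. This reproduces the argument of Theorem 3.14 of \cite{Malt}, as for $FC$.

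For contractibility, I take an admissible pair $f,g:\vec p\rightrightarrows k$ in $IC$. Since $\vec p$ and $k$ are finitely presentable data and the arrows of a colimit of theories along identity-on-objects maps are unions, both $f$ and $g$ factor through some finite stage $R_m\cdots R_0\Theta_0^\op$ with $m\geq k$. The source and target equalities also hold at some finite stage, which I can absorb into the same stage $D_{N}$ of the refined chain. At the next $R_k$-type stage, or at an intermediate stage of the small object argument for $R_k$, a lift $\delta_{f,g}$ is freely added, and its image in $IC$ is a lift.

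The subtlety here is that $R_k$ is applied only once, at stage $k$. An admissible pair of dimension $k$ that first appears later, say created by lifts of dimension $j>k$ composed with other operations, would never get a lift. I expect this to be the main obstacle. To handle it, I will argue that any arrow $\vec p\to k$ in the colimit already lies in $R_k\cdots R_0\Theta_0^\op$. Adding lifts of dimension $j>k$ only adds arrows with codomain $j+1>k+1$, and no new arrows land in dimension $\le k+1$. I plan to verify this using the fact that the globular maps $s,t$ only decrease dimension, together with the description of the free pushouts in Lemma \ref{univ_prop_monad_unit}. With this, a pair of dimension $k$ lives at stage $k$ and is lifted within $R_k$, because $R_k$ itself is a fibrant replacement and so lifts all of its own height-$k$ pairs.
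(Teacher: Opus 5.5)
You have put your finger on the real issue: a height-$k$ admissible pair that only comes into existence after stage $k$. The paper's one-line proof (``by construction'') does not mention it. However, your resolution rests on a lemma you announce but do not prove. The lemma says that for $j>k$ the step $IC_j\to IC_{j+1}=R_j(IC_j)$ creates no new arrows into dimensions $\le k+1$.

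The reason you give, that $s$ and $t$ decrease dimension, does not prove this. The theory $IC_j$ contains every lower-height lift and all their composites. The danger is an operation $\phi:\vec q\to m$ of $IC_j$, with $m\le k+1$, into which a new $(j+1)$-cell $\delta_{f,g}\circ h$ is plugged. You must show that every such $\phi$ uses its $(j+1)$-dimensional inputs only through their boundaries. Then $\phi\circ(\dots,\delta_{f,g}\circ h,\dots)$ rewrites, via $s\circ\delta_{f,g}=f$ and $t\circ\delta_{f,g}=g$, to an old arrow. That invariant rests on the fact that every generating lift $\delta_{f,g}:\vec p\to i+1$ has $\hgt(\vec p)\le i+1$, and it needs a genuine induction over the free construction. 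Lemma \ref{univ_prop_monad_unit} is a universal property, not a description of hom-sets, so on its own it cannot give you ``no new arrows.''

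You also need faithfulness, which you do not mention. For $(f,g)$ to be admissible at stage $k+1$, the equations $sf=sg$ and $tf=tg$ must already hold there. So later stages must not identify old low-dimensional arrows.

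If you take the paper's Section 5 results as given, you can bypass this analysis entirely. For $j>k$, the distributive law $\lambda^{k,j}:R_kR_j\Rightarrow R_jR_k$ (Construction \ref{distlaw_comp} and Lemma \ref{distr_ser_fibr_repl}) lifts $R_j$ to $R_k$-algebras. Concretely, if $(A,a)$ is an $R_k$-algebra, so is $R_jA$, with structure $R_kR_jA\xrightarrow{\lambda^{k,j}_A}R_jR_kA\xrightarrow{R_ja}R_jA$. Since $IC_{k+1}=R_k(IC_k)$ is a free $R_k$-algebra, induction makes every $IC_N$ with $N\ge k+1$ an $R_k$-algebra. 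An $R_k$-algebra $(A,a)$ lifts every height-$k$ pair $(f,g)$: the lift is $a(\delta_{f,g})$, because $a$ is a map of theories with $a\circ\eta^k_A=1$.

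Your own finite-stage argument then finishes contractibility. The pair $f,g$ and its boundary equations live in some $IC_N$, and you may take $N\ge k+1$. You never need to push the pair back down to stage $k$. This is the content hidden in the paper's ``by construction'': the distributive series is exactly the bookkeeping that higher stages preserve lower-height lifts.

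Strictly speaking, Lemma \ref{univ_prop_monad_unit} and the well-definedness of Construction \ref{distlaw_comp} tacitly rely on the same kind of stability. So your lemma is the honest foundation, but granting the paper's lemmas you need not reprove it.

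Likewise, the tower needs no refinement. By Lemma \ref{univ_prop_monad_unit}, each $IC_k\to IC_{k+1}$ is a single free adjunction of lifts to the set of height-$k$ admissible pairs of $IC_k$. So the defining chain is already the chain required by Definition \ref{eq:8}. Your $\omega\cdot\omega$-chain of one-step extensions also cannot simply be reindexed as an $\omega$-chain of one-step extensions.
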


\begin{proof}
This is an $(\infty,0)$-coherator by construction.
\end{proof}

In fact, the construction of this coherator is suggested to exist informally in the paragraph following Definition 2.5 of \cite{HenLan}. We have merely constructed a monad that gives it as an output when we put the initial globular theory in as an input.

\begin{notation}
We write $\inftygpd=\inftygpd_{IC}$.
\end{notation}

\section{Inductively Defining Weak n-Groupoids and Comparison Maps}
The inductive coherator is constructed as the colimit of the following chain in $\Th_{\Theta_0^\op}$, where $IC_{0}=\Theta_0^\op$ and $IC_{n+1}=R_n(IC_n)$ for all $n\geq 0$.
\[
IC_0\to IC_1\to\cdots\to IC_n\to IC_{n+1}\to\cdots
\]
We now begin the process of defining a chain of limit sketches that points in the opposite direction, where $IC_0^{\leq -1}$ is a one object category.
\[
\cdots \to IC_{n+2}^{\leq n+1}\to IC_{n+1}^{\leq n}\to\cdots\to IC_{1}^{\leq 0}\to IC_0^{\leq -1}
\]
This will come equipped with a map $\tr_n:IC_{n+1}\to IC_{n+1}^{\leq n}$ of limit sketches for all $n\geq 0$ such that the following square commutes for all $n\geq 0$. Moreover, we define the category of $n$-groupoids to be the category of models out of $IC_{n+1}^{\leq n}$.

\[
\begin{tikzpicture}[node distance=2cm,auto]
	\node (A) {$IC_{n+1}$};
	\node (B)[right of=A] {$IC_{n+2}$};
	\node(C)[below of=A] {$IC_{n+1}^{\leq n}$};
	\node (D)[right of=C] {$IC_{n+2}^{\leq n+1}$};
	\draw[->] (A) to node {}(B);
	\draw[->,swap] (A) to node{$\tr_n$}(C);
	\draw[->] (B) to node{$\tr_{n+1}$}(D);
	\draw[->,swap] (D) to node {$\tr$}(C);
\end{tikzpicture}
\]

\begin{notation}
Let $n\geq 0$. We write $\bG_n$ to denote the full subcategory of $\bG$ with object set $\{0,\dots,n\}$. Moreover, define $IC_{n,0}$ to be the full subcategory of $\Theta_0^\op$ whose objects consists of tables of dimensions $\vec{p}$ with $\hgt(\vec{p})\leq n$.
\end{notation}

Let $n\geq 0$. Define $\tr_n:\bG^\op\to\bG^\op_n$ by 
\[
tr_n(i)=\begin{cases}
i&\text{ if }i\leq n\\
n&\text{ otherwise}
\end{cases}
\]
Write $IC_0=\Theta_0^\op$. Define $D_n:\bG^\op\to IC_{n,0}$ by
\[
D(i)=(i).
\]
Notice that the composite 
\[
\bG^\op\xrightarrow{\tr_n}\bG^\op_n\xrightarrow{D_n}IC_{n,0}
\]
preserves globular products, so that there is an essenitally unique functor
\[
\tr_n:IC_0\to IC_{n,0}
\]
that preserves globular products such that the following square commutes.
\[
\begin{tikzpicture}[node distance=2cm,auto]
	\node (A) {$\bG^\op$};
	\node (B)[right of=A] {$\bG^\op_n$};
	\node(C)[below of=A] {$IC_0$};
	\node (D)[right of=C] {$IC_{n,0}$};
	\draw[->] (A) to node {$\tr_n$}(B);
	\draw[->,swap] (A) to node{$D$}(C);
	\draw[->] (B) to node{$D_n$}(D);
	\draw[->,swap] (C) to node {$\tr_n$}(D);
\end{tikzpicture}
\]
The functor is actually unique because the isomorphism class are just singletons. We will now form a diagram where every functor preserves globular products.
\[
\begin{tikzpicture}[>=stealth, thick, node distance=1.8cm]

\node (A0) {$IC_0$};
\node (A1) [right of=A0] {$IC_1$};
\node (A2) [right of=A1] {$\cdots$};
\node (An) [right of=A2] {$IC_n$};
\node (An1) [right of=An] {$IC_{n+1}$};

\node (B0) [below of=A0, node distance=2cm] {$IC_{n,0}$};
\node (B1) [right of=B0] {$IC_{n,1}$};
\node (B2) [right of=B1] {$\cdots$};
\node (Bn) [right of=B2] {$IC_{n,n}$};
\node (Bn1) [right of=Bn] {$IC_{n+1}^{\leq n}$};

\draw[->] (A0) to node[above] {} (A1);
\draw[->] (A1) to node[above] {} (A2);
\draw[->] (A2) to node[above] {} (An);
\draw[->] (An) to node[above] {} (An1);

\draw[->] (B0) to node[below] {} (B1);
\draw[->] (B1) to node[below] {} (B2);
\draw[->] (B2) to node[below] {} (Bn);
\draw[->] (Bn) to node[below] {} (Bn1);

\draw[->] (A0) to node[left] {$\tr_n$} (B0);
\draw[->] (A1) to node[left] {$\tr_n$} (B1);
\draw[->] (An) to node[left] {$\tr_n$} (Bn);
\draw[->] (An1) to node[left] {$\tr_n$} (Bn1);

\end{tikzpicture}
\]
We have already constructed the horizontal maps on top of this diagram: it is obtained by unraveling the data of the inductive coherator in Definition \ref{our_sp_choice}. We construct the rest of the diagram except the last step inductively. Suppose that for some $0\leq i\leq n-1$, we have obtained a map 
\[
\tr_n:IC_i\to IC_{n,i}
\]
that preserves globular products. We define the theory extension
\[
IC_{n,i}\to IC_{n,i+1}
\]
by freely adding a lift for every admissible pair at height $i$. If $(f,g)$ is such an admissible pair, we write $\delta_{f,g}$ to denote such a lift freely added.  Now let $(f,g)$ be an admissible pair of height $i$ in $IC_{i}$. Then $(\tr_n(f),\tr_n(g))$ is an  admissible pair of height $i$ in $IC_{n,i}$. We now define $\tr_n:IC_{i+1}\to IC_{n,i+1}$ to be the unique map that preserves globular products such that 
\[
\tr_n(\delta_{f,g})=\delta_{\tr_n(f),\tr_n(g)}
\]
for every admissible pair $(f,g)$ of height $i$ in $IC_{i}$ and the following diagram commutes. 
\[
\begin{tikzpicture}[node distance=2cm,auto]
	\node (A) {$IC_i$};
	\node (B)[right of=A] {$IC_{n,i}$};
	\node(C)[below of=A] {$IC_{i+1}$};
	\node (D)[right of=C] {$IC_{n,i+1}$};
	\draw[->] (A) to node {$\tr_n$}(B);
	\draw[->,swap] (A) to node{}(C);
	\draw[->] (B) to node{}(D);
	\draw[->,swap] (C) to node {$\tr_n$}(D);
\end{tikzpicture}
\]
Now we construct the final square in our desired diagram. Define the theory extension 
\[
IC_{n,n}\to IC_{n+1}^{\leq n}
\]
to be obtained by identifying every admissible pair at height $n$. We define $\tr_n:IC_{n+1}\to IC_{n+1}^{\leq n}$ to be the unique map that preserves globular products such that 
\[
\tr_n(\delta_{f,g})=[f]
\]
for every admissible pair $(f,g)$ of height $n$ in $IC_{n}$ and the following diagram commutes. 
\[
\begin{tikzpicture}[node distance=2cm,auto]
	\node (A) {$IC_n$};
	\node (B)[right of=A] {$IC_{n,n}$};
	\node(C)[below of=A] {$IC_{n+1}$};
	\node (D)[right of=C] {$IC_{n+1}^{\leq n}$};
	\draw[->] (A) to node {$\tr_n$}(B);
	\draw[->,swap] (A) to node{}(C);
	\draw[->] (B) to node{}(D);
	\draw[->,swap] (C) to node {$\tr_n$}(D);
\end{tikzpicture}
\]
We now define $t:\bG^\op_{n+1}\to\bG^\op_n$  by 
\[
\tr(i)=\begin{cases}
i&\text{ if }i\leq n\\
n&\text{ otherwise}
\end{cases}
\]
The following diagram commutes for all $n\geq 0$.
\[
\begin{tikzpicture}[>=stealth, thick, node distance=2.5cm]

\node (A) {$\bG^\op$};
\node (B) [right of=A] {$\bG^\op_{n+1}$};
\node (C) [below of=B, node distance=2.5cm] {$\bG^\op_n$};

\draw[->] (A) to node[above] {$\tr_{n+1}$} (B);
\draw[->] (A) to node[left] {$\tr_n$} (C);
\draw[->] (B) to node[right] {$\tr$} (C);

\end{tikzpicture}
\]
The composite
\[
\bG^\op_{n+1}\xrightarrow{\tr}\bG^\op_n\to IC_{n,0}
\]
preserves globular products, so there is a unique map $\tr:IC_{n+1,0}\to IC_{n,0}$ that preserves globular products such that the following square commutes. 
\[
\begin{tikzpicture}[node distance=2cm,auto]
	\node (A) {$\bG^\op_{n+1}$};
	\node (B)[right of=A] {$\bG^\op_n$};
	\node(C)[below of=A] {$IC_{n+1,0}$};
	\node (D)[right of=C] {$IC_{n,0}$};
	\draw[->] (A) to node {$\tr$}(B);
	\draw[->,swap] (A) to node{$D_{n+1}$}(C);
	\draw[->] (B) to node{$D_n$}(D);
	\draw[->,swap] (C) to node {$\tr$}(D);
\end{tikzpicture}
\]

For $0\leq i\leq n-1$, suppose there is a functor $\tr:IC_{n+1,i}\to IC_{n,i}$ that preserves globular products. Define $\tr:IC_{n+1,i+1}\to IC_{n,i+1}$ to be the unique map such that
\[
\tr(\delta_{f,g})=\delta_{\tr(f),\tr(g)}
\]
for every admissible pair at height $i$ and the following diagram commutes.
\[
\begin{tikzpicture}[node distance=2cm,auto]
	\node (A) {$IC_{n+1,i}$};
	\node (B)[right of=A] {$IC_{n,i}$};
	\node(C)[below of=A] {$IC_{n+1,i+1}$};
	\node (D)[right of=C] {$IC_{n,i+1}$};
	\draw[->] (A) to node {$\tr$}(B);
	\draw[->,swap] (A) to node{}(C);
	\draw[->] (B) to node{}(D);
	\draw[->,swap] (C) to node {$\tr$}(D);
\end{tikzpicture}
\]
By induction, we obtain a map $\tr:IC_{n+1,n}\to IC_{n,n}$ that preserves globular products. Define $\tr: IC_{n+1,n+1}\to IC_{n+1}^{\leq n}$ to be the unique map such that
\[
\tr(\delta_{f,g})=[f]
\]
for every admissible pair $(f,g)$ at height $n$ and the following diagram commutes.
\[
\begin{tikzpicture}[node distance=2cm,auto]
	\node (A) {$IC_{n+1,n}$};
	\node (B)[right of=A] {$IC_{n,n}$};
	\node(C)[below of=A] {$IC_{n+1,n+1}$};
	\node (D)[right of=C] {$IC_{n+1}^{\leq n}$};
	\draw[->] (A) to node {$\tr$}(B);
	\draw[->,swap] (A) to node{}(C);
	\draw[->] (B) to node{}(D);
	\draw[->,swap] (C) to node {$\tr$}(D);
\end{tikzpicture}
\]

Notice that if $(f,g)$ is an admissible pair of height $n+1$ in $IC_{n+1,n+1}$, then $\tr(f)=\tr(g)$, so there is a unique map $\tr:IC_{n+2}^{\leq n+1}\to IC_{n+1}^{\leq n}$ that preserves globular products such that the diagram commutes.

\[
\begin{tikzpicture}[>=stealth, thick, node distance=2.5cm]

\node (A) {$IC_{n+1,n+1}$};
\node (B) [right of=A] {$IC_{n+2}^{\leq n+1}$};
\node (C) [below of=B, node distance=2.5cm] {$IC_{n+1}^{\leq n}$};

\draw[->] (A) to node[above] {} (B);
\draw[->] (A) to node[left] {$\tr$} (C);
\draw[->] (B) to node[right] {$\tr$} (C);
\end{tikzpicture}
\]

By construction, the following diagram commutes.
\[
\begin{tikzpicture}[node distance=2cm,auto]
	\node (A) {$IC_{n+1}$};
	\node (B)[right of=A] {$IC_{n+2}$};
	\node(C)[below of=A] {$IC_{n+1}^{\leq n}$};
	\node (D)[right of=C] {$IC_{n+2}^{\leq n+1}$};
	\draw[->] (A) to node {}(B);
	\draw[->,swap] (A) to node{$\tr_n$}(C);
	\draw[->] (B) to node{$\tr_{n+1}$}(D);
	\draw[->,swap] (D) to node {$\tr$}(C);
\end{tikzpicture}
\]

This induces a square of adjunctions where the outside and inside square commutes and consists of the right adjoints.
\[
\begin{tikzpicture}[>=stealth, thick, node distance=4cm]

\node (A) {$\Mod(IC_{n+2})$};
\node (B) [right of=A] {$\Mod(IC_{n+1})$};
\node (C) [below of=A] {$\Mod(IC_{n+2}^{\leq n+1})$};
\node (D) [right of=C] {$\Mod(IC_{n+1}^{\leq n})$};

\draw[transform canvas={yshift=0.3ex},->] (A) to node[above] {$U$} (B);
\draw[transform canvas={yshift=-0.3ex},<-] (A) to node[below] {$\free$} (B);

\draw[transform canvas={yshift=-0.3ex},->] (D) to node[below] {$D$} (C);
\draw[transform canvas={yshift=0.3ex},<-] (D) to node[above] {$\tr$} (C);

\draw[transform canvas={xshift=0.3ex},->] (A) to node[left] {$D$} (C);
\draw[transform canvas={xshift=-0.3ex},<-] (A) to node[right] {$\tr$} (C);

\draw[transform canvas={xshift=-0.3ex},->] (B) to node[left] {$\tr$} (D);
\draw[transform canvas={xshift=0.3ex},<-] (B) to node[right] {$D$} (D);

\end{tikzpicture}
\]
We use the letter $D$ for the right adjoint on the bottom because it sends a model $X:IC_{n+1}^{\leq n}\to\Set$ to a model
$D(X):IC_{n+2}^{\leq n+1}\to\Set$ with 
\[
D(X)_i=X_i
\]
for $0\leq i\leq n$ and
\[
D(X)(a,b)=*
\]
for all $a,b\in X_n$. We call the right adjoints on the sides $D$ for an identical reason. The right adjoint on the top is obtained by forgetting the axioms obtained by transferring along the inductive coherator on the underlying theories. The left adjoint on the top is defined by freely adding $(n+1)$-cells with respect to freely adding lifts along the chain that defines the coherator. The left adjoints are just the truncation functors.

\begin{lemma}
For all $n\geq 0$, the following diagrams commute.
\[
\begin{tikzpicture}[>=stealth, thick, node distance=3cm]

\node (A) {$\Mod(IC_{n+1}^{\leq n})$};
\node (B) [right of=A] {$\Mod(IC_{n+1})$};
\node (C) [below of=B, node distance=2.5cm] {$\Mod(IC_{n+1}^{\leq n}$};

\draw[->] (A) to node[above] {$D$} (B);
\draw[->] (A) to node[left] {$1_{\Mod(IC_{n+1}^{\leq n})}$} (C);
\draw[->] (B) to node[right] {$\tr$} (C);
\end{tikzpicture}
\]
\end{lemma}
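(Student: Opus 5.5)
The plan is to reduce the claim to a statement about the underlying functor of limit sketches, $\tr_n:IC_{n+1}\to IC_{n+1}^{\leq n}$. First I identify the two functors in the triangle. The right adjoint $D:\Mod(IC_{n+1}^{\leq n})\to\Mod(IC_{n+1})$ is restriction along $\tr_n$, namely $D(X)=X\circ\tr_n$. The functor $\tr:\Mod(IC_{n+1})\to\Mod(IC_{n+1}^{\leq n})$ is its left adjoint, the truncation. Since $\tr_n$ preserves globular products, its left adjoint is computed as left Kan extension along $\tr_n$, followed if necessary by reflection into globular-product-preserving functors. The triangle therefore asserts that the counit $\tr D\Rightarrow 1$ is an isomorphism, i.e. that $D$ is fully faithful.

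I would therefore show that $\tr_n:IC_{n+1}\to IC_{n+1}^{\leq n}$ is, up to the globular product structure, a localization-type quotient: it is essentially surjective on objects, and full. Essential surjectivity comes from the construction. Every table $\vec p$ in $IC_{n,0}$ is $\tr_n(\vec p)$, since $\tr_n$ fixes tables of height $\leq n$. Fullness is proved by induction along the tower $IC_{n,0}\to IC_{n,1}\to\cdots\to IC_{n,n}\to IC_{n+1}^{\leq n}$. Generators of $IC_{n,i+1}$ are either images of generators of $IC_{n,i}$ or freely added lifts $\delta_{\tr_n f,\tr_n g}$. By the inductive hypothesis, $f'=\tr_n(f)$ and $g'=\tr_n(g)$ for some admissible pair $(f,g)$ in $IC_i$; one should check that its height is preserved, using $\hgt(\vec p)\leq i+1$. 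Hence $\delta_{f',g'}=\tr_n(\delta_{f,g})$ by definition of $\tr_n$ on $IC_{i+1}$. The last step, identifying admissible pairs at height $n$, is a quotient, so it is full. Composites and globular-product tuplings of maps in the image stay in the image because $\tr_n$ preserves both.

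Given a full, bijective-on-objects functor $\tr_n$ that is a quotient preserving globular products, restriction $\Mod(IC_{n+1}^{\leq n})\to\Mod(IC_{n+1})$ is fully faithful. The reason is that a natural transformation between functors of the form $X\circ\tr_n$ has components indexed by the same objects, and naturality along $\tr_n(h)$ for every $h$ is exactly naturality in $IC_{n+1}^{\leq n}$. A right adjoint that is fully faithful has invertible counit, so $\tr\circ D\cong 1_{\Mod(IC_{n+1}^{\leq n})}$. Because the truncation functors are chosen via the explicit descriptions $D(X)_i=X_i$ for $i\le n$ and trivial higher cells, I would finally check directly that $\tr D(X)$ has the same cell sets and operations as $X$, so the triangle commutes on the nose with these choices.

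The main obstacle is the fullness induction, specifically the height bookkeeping. An admissible pair of height $i$ in $IC_{n,i}$ is only known to be in the image of $\tr_n$ on generators. I need every admissible pair $(f',g')$ to lift to an admissible pair $(f,g)$ in $IC_i$, meaning one whose source and target also agree, not merely their images. If the lifts do not automatically agree, I would first try choosing $f,g$ inside the sub-theory of $IC_i$ on tables of height $\leq n$. There $\tr_n$ restricts to the identity on $\Theta_0^{\op}$-data, and by construction it is an isomorphism onto $IC_{n,i}$ for $i\leq n$, which makes the lift canonical.
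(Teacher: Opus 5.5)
The paper states this lemma without proof, so there is no argument of the author's to compare with. Your reduction is the natural one. The triangle says the counit $\tr D\Rightarrow 1$ is invertible, i.e.\ that $D=(-)\circ\tr_n$ is fully faithful, and this follows if $\tr_n:IC_{n+1}\to IC_{n+1}^{\leq n}$ is surjective on objects and suitably full.

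\textbf{The gap is at the step you flag yourself.} Lifting an admissible pair $(f',g')$ of $IC_{n,i}$ along arbitrary preimages only gives $\tr_n(sf)=\tr_n(sg)$, not $sf=sg$. You close this by asserting that $\tr_n$, restricted to the height-$\leq n$ tables of $IC_i$, is an isomorphism onto $IC_{n,i}$ ``by construction''. It is not by construction. The universal property produces $\tr_n$ out of $IC_i$, but injectivity on hom-sets between tables of height $\leq n$ requires showing that the free extensions building $IC_i$ create no additional morphisms between such tables, and you do not show this. The same unproved faithfulness is hidden in ``composites and tuplings of maps in the image stay in the image'': preimages of a compatible family need not be compatible in $IC_{n+1}$.

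There is a second, smaller slip. $\tr_n$ is not bijective on objects, since $\tr_n(n+1)=\tr_n(n)=n$. So your descent of a natural transformation $\alpha:X\tr_n\Rightarrow Y\tr_n$ needs an extra step.

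\textbf{Repairing fullness.} Instead of lifting pairs one at a time, build a section functorially, so no faithfulness is needed.
\begin{enumerate}
\item Let $\sigma_0:IC_{n,0}\hookrightarrow\Theta_0^{\op}$ be the inclusion, so $\tr_n\sigma_0=1$.
\item Suppose $\sigma_i:IC_{n,i}\to IC_i$ preserves globular products and $\tr_n\sigma_i=1$. Each admissible pair $(f',g')$ at height $i$ in $IC_{n,i}$ is sent to an admissible pair $(\sigma_if',\sigma_ig')$ in $IC_i$. The universal property of $IC_{n,i}\to IC_{n,i+1}$ then gives $\sigma_{i+1}:IC_{n,i+1}\to IC_{i+1}$ with $\sigma_{i+1}(\delta_{f',g'})=\delta_{\sigma_if',\sigma_ig'}$.
\item $\tr_n\sigma_{i+1}$ and $1$ agree on $IC_{n,i}$ (by the square defining $\tr_n$ on $IC_{i+1}$) and on the new lifts. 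By uniqueness, $\tr_n\sigma_{i+1}=1$.
\item The composite $IC_{n,n}\xrightarrow{\sigma_n}IC_n\to IC_{n+1}\xrightarrow{\tr_n}IC_{n+1}^{\leq n}$ is therefore the quotient map. Hence $IC_{n+1}(\vec p,\vec q)\to IC_{n+1}^{\leq n}(\vec p,\vec q)$ is surjective whenever $\hgt(\vec p),\hgt(\vec q)\leq n$.
\end{enumerate}

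\textbf{Repairing the descent.} $\tr_n$ sends $s:(k+1)\to(k)$ to the identity for $k\geq n$. Naturality along these maps forces $\alpha_{(k)}=\alpha_{(n)}$ for $k>n$. Since models preserve globular products, $\alpha$ is determined by its components at globes. Hence $\alpha=\beta\tr_n$, where $\beta$ is given by the components of $\alpha$ at height-$\leq n$ tables, and $\beta$ is natural by the fullness just proved. With these two repairs your argument proves the lemma.
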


We now set some notation. 

\begin{notation}
We write the following for all $n\geq 0$.
\[
\ngpd=\Mod(IC_{n+1}^{\leq n})
\]
\end{notation}
One thing that immediate is that there is an isomorphism of categories. 
\[
\mathpzc{0Gpd}\cong\Set
\]
We can say even more: this provides a square of adjoints

\[
\begin{tikzpicture}[>=stealth, thick, node distance=4cm]

\node (A) {$\Mod(IC_{n+2})$};
\node (B) [right of=A] {$\Mod(IC_{n+1})$};
\node (C) [below of=A] {$\mathpzc{(n+1)Gpd}$};
\node (D) [right of=C] {$\ngpd$};

\draw[transform canvas={yshift=0.3ex},->] (A) to node[above] {$U$} (B);
\draw[transform canvas={yshift=-0.3ex},<-] (A) to node[below] {$\free$} (B);

\draw[transform canvas={yshift=-0.3ex},->] (D) to node[below] {$D$} (C);
\draw[transform canvas={yshift=0.3ex},<-] (D) to node[above] {$\tr$} (C);

\draw[transform canvas={xshift=0.3ex},->] (A) to node[left] {$D$} (C);
\draw[transform canvas={xshift=-0.3ex},<-] (A) to node[right] {$\tr$} (C);

\draw[transform canvas={xshift=-0.3ex},->] (B) to node[left] {$\tr$} (D);
\draw[transform canvas={xshift=0.3ex},<-] (B) to node[right] {$D$} (D);

\end{tikzpicture}
\]
and allows us a subtle interpretation of the following square for all $n\geq 0$.

\[
\begin{tikzpicture}[node distance=2cm,auto]
	\node (A) {$IC_{n+1}$};
	\node (B)[right of=A] {$IC_{n+2}$};
	\node(C)[below of=A] {$IC_{n+1}^{\leq n}$};
	\node (D)[right of=C] {$IC_{n+2}^{\leq n+1}$};
	\draw[->] (A) to node {}(B);
	\draw[->,swap] (A) to node{$\tr_n$}(C);
	\draw[->] (B) to node{$\tr_{n+1}$}(D);
	\draw[->,swap] (D) to node {$\tr$}(C);
\end{tikzpicture}
\]

The coherator has all the necessary coherence data to encode the data for an $n$-groupoid. The quotient map (the left vertical map) encodes the axioms on the top dimension, i.e. dimension $n$. The top horizontal map corresponds to categorifying the top level axioms for an $n$-groupoid to add the coherence data in dimension $n+1$ to encode the data for an $(n+1)$-groupoid. The quotient map (the right vertical map) encodes the axioms on the top dimension, i.e. dimension $n+1$. The truncation map, i.e the bottom horizontal map, truncates the coherence data and axioms for an $(n+1)$-groupoid to the coherence data and axioms for an $n$-groupoid.

Now let us consider the chain 
\[
\cdots \xrightarrow{\tr} IC_{n+2}^{\leq n+1}\xrightarrow{\tr}IC_{n+1}^{\leq n}\xrightarrow{\tr}\cdots\xrightarrow{\tr} IC_{2}^{\leq 1}\xrightarrow{\tr}IC_1^{\leq 0}
\]
in the category of realized limit sketches. Let us write $IC_\infty^{\leq \infty}$ denote the limit. By abuse of notation, we will use $\inftygpd$ and $\Mod(IC_\infty^{\leq \infty})$ interchangeably. The following lemma says this causes no issues.

\begin{lemma}
There is an isomorphism of categories.
\[
\inftygpd\cong \Mod(IC_\infty^{\leq \infty})
\]
\end{lemma}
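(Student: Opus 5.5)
Since $IC$ is the colimit in $\Th_{\Theta_0^\op}$ of the chain $IC_0\to IC_1\to\cdots$, the plan is to identify $\inftygpd=\Mod(IC)$ with the limit of the categories $\Mod(IC_n)$ under the restriction functors $U$. In parallel, I identify $\Mod(IC_\infty^{\leq\infty})$ with the limit of the categories $\ngpd$ under the restriction functors $D$ induced by the maps $\tr:IC_{n+2}^{\leq n+1}\to IC_{n+1}^{\leq n}$. It then remains to compare the two towers. Their comparison is exactly the commuting squares
\[
\tr\circ\tr_{n+1}=\tr_n\circ(IC_{n+1}\to IC_{n+2})
\]
constructed above, which induce the commuting squares of right adjoints
\[
U\circ D=D\circ D .
\]

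\textbf{Step 1 (the top tower).} A globular-product-preserving functor $IC\to\Set$ is the same as a compatible family of such functors $IC_n\to\Set$. This holds because morphisms in the colimit theory are generated by those of the stages, all stages share the objects $\ob(\Theta_0^\op)$, and globular products are already determined in $\Theta_0^\op$. Hence $\Mod(IC)\cong\lim_n\Mod(IC_n)$.

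\textbf{Step 2 (the bottom tower).} The limit $IC_\infty^{\leq\infty}$ of realized limit sketches has the tables of dimensions of all heights as objects. Its morphisms between two tables $\vec p,\vec q$ are the compatible families of their images under $\tr$ in the stages $IC_{n+1}^{\leq n}$. Once $n$ exceeds the relevant heights, these hom-sets stabilize. The reason is that the cells of $IC_{n+1}^{\leq n}$ in dimension $k<n$, and the lifts added at height $k<n$, are unaffected by passing from $IC_{n+2}^{\leq n+1}$ to $IC_{n+1}^{\leq n}$. Only dimensions $n$ and $n+1$ are touched by $\tr$, where the height-$n$ lifts get identified.

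\textbf{Step 3 (comparison).} Using this stabilization, I would build the comparison functor $IC\to IC_\infty^{\leq\infty}$. It is induced by the compatible maps $\tr_n$, via the universal property of the colimit together with that of the limit. I would then show it is an isomorphism of theories: for fixed source and target, the morphisms on both sides are computed at a stage $n$ large relative to the heights, where $\tr_n$ is bijective on the relevant hom-sets. Since the functor is bijective on objects and preserves globular products, restricting models along it gives $\Mod(IC_\infty^{\leq\infty})\cong\Mod(IC)=\inftygpd$.

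\textbf{Main obstacle.} The hard part is the stabilization claim in Step 3. I must check that $\tr_n:IC_{n+1}\to IC_{n+1}^{\leq n}$ is bijective on hom-sets between tables whose height is at most $n-1$. Quotienting the top-dimensional admissible pairs identifies only maps into $n$. However, the globular products involved might have maps routed through dimension-$n$ composites. I would handle this with a normal-form argument: every morphism $\vec p\to k$ with $k<n$ in $IC_{n+1}$ factors through the lifts added at heights $<k+1$. Proving this requires the inductive description of $IC_{i+1}=R_i(IC_i)$ via Lemma \ref{univ_prop_monad_unit}. If this fails, the fallback is to prove the isomorphism directly at the level of models. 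In that version, a model of $IC$ determines a compatible family of $n$-truncations, and conversely such a family reassembles into a model, since each cell and each lift operation of dimension $k$ is already present in every truncation with $n>k+1$.
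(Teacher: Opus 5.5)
The paper states this lemma without proof, so your argument has to stand on its own. As written it breaks in Steps 2 and 3, before the hom-set stabilization you single out.

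\textbf{Main gap: the limit has extra objects.} Computed on underlying categories, the limit $IC_\infty^{\leq\infty}$ does \emph{not} have only tables of finite height as objects. The functor $\tr\colon IC_{n+2}^{\leq n+1}\to IC_{n+1}^{\leq n}$ sends $(n+1)$ to $(n)$, so the sequence $x_n=(n)$ is an object of the limit. So is, for example, $x_m=(m,k,m)$ for $m>k$ (with $x_m=(m)$ for $m\leq k$). In effect the limit contains tables with ``infinite'' entries. It also contains many morphisms to and from these objects. Examples are the compatible families $(s^{n-k})_n\colon((n))_n\to(k)$ of iterated sources and $(\iota^{n-k})_n\colon(k)\to((n))_n$ of iterated identity lifts; these are compatible because $\tr$ sends the height-$n$ identity lift to an identity.

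The comparison functor $IC\to IC_\infty^{\leq\infty}$ only reaches the eventually constant sequences $(\tr_n\vec p)_n$. So it is not bijective on objects and is not an isomorphism of theories, however the stabilization on finite tables turns out. That stabilization is plausible, but it would only identify $IC$ with the full subcategory on eventually constant sequences.

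To conclude $\Mod(IC_\infty^{\leq\infty})\cong\Mod(IC)$, you would still have to prove that every model of $IC$ extends uniquely to these infinite objects and morphisms. The limit-sketch structure does not obviously force this: no designated cone (a compatible family of globular-product cones) exhibits $((n))_n$ as a limit of finite objects. Either that uniqueness has to be established, or the definition of $IC_\infty^{\leq\infty}$ has to be changed.

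\textbf{The framing and the fallback do not close the gap.} Restriction along $\tr$ gives functors $D\colon\ngpd\to\mathpzc{(n+1)Gpd}$. A limit of the $\ngpd$ ``under the functors $D$'' is therefore just $\mathpzc{0Gpd}$, since the indexing poset has an initial object. The functor $\Mod(-)$ converts colimits of sketches into limits of categories of models, which is why Step 1 is fine. It gives no formula for models of a limit of sketches.

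Your fallback instead compares $\inftygpd$ with the limit of the $\ngpd$ along the truncation left adjoints $\tr$. That is probably the content the paper actually relies on, since it is how $\tr_n\colon\inftygpd\to\ngpd$ is used in the final section. Your reassembly sketch is a reasonable route to that statement. But it concerns a limit of categories of models, not $\Mod(IC_\infty^{\leq\infty})$, and relating the two runs into the same infinite-object issue.
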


\section{Generating Cofibrations and Trivial Cofibrations}
We now explain the maps that we will demand to be the generating cofibrations and trivial cofibrations for the model structures in a Section 10. We first define our special objects and maps for globular sets before translating across left adjoints.

\begin{notation}
Let $k\geq 0$. We write $S^{k-1}$ to be the following globular set.
\[
S^{k-1}(i)=\begin{cases}
    \{*_s,*_t\}&\text{ if }i\leq k-1\\
    \emptyset&\text{ otherwise.}
\end{cases}
\]
\end{notation}

\begin{notation}\label{gen_cof_on_glob_set}
We write $j_k:S^{k-1}\to D^k$ to denote the natural inclusion of $S^{k-1}$ into $D^k$ for all $k\geq 0$. We write 
\[
\zI=\{S^{k-1}\xrightarrow{j_k}D^k:k\geq 0\}.
\]
\end{notation}

\begin{notation}\label{gen_triv_cof_on_glob_set}
We write $\sigma_k:D^{k}\to D^{k+1}$ to denote the inclusion of $D^k$ into $D^{k+1}$ as the source $k$-cell of the $(k+1)$-cell for all $k\geq 0$. We write 
\[
\zJ=\{D^{k}\xrightarrow{\sigma_k}D^{k+1}:k\geq 0\}.
\]
\end{notation}

These maps could be obtained by considering the image of the map
\[
s:k\to k+1
\]
under the Yoneda embedding $Y:\bG\to[\bG^\op,\Set]$ for all $k\geq 0$. We will write $D:=Y$. Notice that there is an isomorphism of globular sets, natural in $X$.
\[
\phi_X:X\to \GSet(D(-),X)
\]

As there is an equivalence of categories
\[
[\bG^\op,\Set]\simeq \Mod(IC_0)
\]
induced by the canonical map
\[
D:\bG^\op\to IC_0,
\]
we may translate the set of maps of Notation \ref{gen_cof_on_glob_set} and Notation \ref{gen_triv_cof_on_glob_set} in $[\bG^\op,\Set]$ to sets of maps  
\[
\zI=\{S^{k-1}\xrightarrow{j_k}D^k:k\geq 0\}
\]
and 
\[
\zJ=\{D^{k}\xrightarrow{\sigma_k}D^{k+1}:k\geq 0\}
\]
in $\Mod(IC_0)$.

We now inductively transfer these maps along the following chain of left adjoints.
\[
\Mod(IC_0)\xrightarrow{\free}\Mod(IC_1)\xrightarrow{\free}\cdots\xrightarrow{\free}\Mod(IC_n)\xrightarrow{\free}\Mod(IC_{n+1})\xrightarrow{\free}\cdots
\]
Inductively define the objects $S^{k-1}=\free(S^{k-1})$ and $D^k=\free(D^k)$  in $\Mod(IC_{n+1})$ for all $k,n\geq 0$. Inductively define the maps 
\[
j_k:S^{k-1}\to D^k
\]
\[
\sigma_k:D^k\to D^{k+1}
\]
by
\[
j_k=\free(j_k)
\]
\[
\sigma_k=\free(\sigma_k)
\]
in $\Mod(IC_{n+1})$ for all $k,n\geq 0$. Set 
\[
\zI'=\{S^{k-1}\xrightarrow{j_k}D^k:k\geq 0\}
\]
and 
\[
\zJ'=\{D^{k}\xrightarrow{\sigma_k}D^{k+1}:k\geq 0\}.
\] 
Upon taking colimits, we induce sets of maps 
\[
\zI'=\{S^{k-1}\xrightarrow{j_k}D^k:k\geq 0\}
\]
and 
\[
\zJ'=\{D^{k}\xrightarrow{\sigma_k}D^{k+1}:k\geq 0\}
\]
in $\inftygpd$.

\begin{notation}\label{gen cofibrations and triv cofibrations}
For all $n\geq 1$, we call $\zI'$ and $\zJ'$ the set of generating cofibrations and the set of generating trivial cofibrations in $\Mod(IC_n)$, respectively.  We call  $\zI'$ and $\zJ'$ the set of generating cofibrations and the set of generating trivial cofibrations in $\inftygpd$, respectively.
\end{notation}

For all $n\geq 1$, we define $S^{k-1}=\tr(S^{k-1})$ and $D^k=\tr(D^k)$ in $\ngpd$. We define the maps 
\[
j_k:S^{k-1}\to D^k
\]
\[
\sigma_k:D^k\to D^{k+1}
\]
by
\[
j_k=\tr(j_k)
\]
\[
\sigma_k=\tr(\sigma_k)
\]
in $\ngpd$ for all $k,n\geq 0$. Set 
\[
\zI=\{S^{k-1}\xrightarrow{j_k}D^k:k\geq 0\}
\]
and 
\[
\zJ=\{D^{k}\xrightarrow{\sigma_k}D^{k+1}:k\geq 0\}.
\] 

\begin{notation}\label{gen cofibrations and triv cofibrations_n_gpd}
For all $n\geq 1$, we call $\zI$ and $\zJ$ the set of generating cofibrations and the set of generating trivial cofibrations in $\ngpd$, respectively.
\end{notation}

Notice that if $k\geq n$, then $\sigma_k=1_{D^n}$ in $\ngpd$. If $k=n+1$, then $j_k$ is the collapse map of $S^n$ onto $D^n$. If $k>n+1$, then $j_k=1_{D^n}$.

\section{Homotopy Groups}
We now provide background on the path components and homotopy group functors for $n$-groupoids for all finite $n$. All the material presented here with the exception of Lemma \ref{path_components_transfer}, Lemma \ref{first_hom_group_transfer}, and Lemma \ref{higher_hom_group_transfer} are translated from results from \cite{Dim1}.

\begin{definition}
	Let  $n\geq 1$, $0\leq k<n$, $X$ be an $n$-groupoid, and $u,v\in X_k$. A \emph{homotopy} from
	$u$ to $v$ is an element $\alpha\in X_{k+1}$ such that 
	\[
	s(\alpha)=u,t(\alpha)=v.
	\] If such a homotopy
	exists, we will say that $u$ is \emph{homotopic} to $v$ and we will write $u
	\sim_k v$.  
\end{definition}

\begin{definition}
	Let  $n\geq 1$, $0\leq k<n$, and $X$ be an $n$-groupoid. We will denote by $\ol{X_k}$ the quotient of $X_k$ by
	the equivalence relation $\sim_n$.
\end{definition}

\begin{construction}
	Let $n\geq 1$ and $X$ be an $n$-groupoid. Define the set $\pi_0(X)$ of \emph{connected components} of $X$ to be the set
	\[ \pi_0(X) = \pi_0(\ol{\omega_1}(X)) = \overline{X_0}. \]
\end{construction}

This construction extends to a functor
\[
\pi_0:\ngpd\to \Set.
\]
Define the functor $\pi_0:\Mod(IC_{n+1})\to\Set$ to be the composite
\[
\Mod(IC_{n+1})\xrightarrow{\tr}\ngpd\xrightarrow{\pi_0}\Set
\]
The following lemmas are immediate and follow by the preservation of cells by the appropriate functors below dimension $n$.

\begin{lemma}\label{path_components_transfer}
The following diagrams commute.
\[
\begin{tikzpicture}[>=stealth, thick, node distance=3cm]

\node (A1) at (0,0) {$D^0/\mathpzc{(n+1)Gpd}$};
\node (B1) [right of=A1] {$D^0/\ngpd$};
\node (C1) [below of=B1, node distance=2.5cm] {$\Set$};

\draw[->] (A1) to node[above] {$\tr$} (B1);
\draw[->] (A1) to node[left] {$\pi_0$} (C1);
\draw[->] (B1) to node[right] {$\pi_0$} (C1);

\begin{scope}[xshift=6.5cm]

\node (A2) at (0,0) {$D^0/\Mod(IC_{n+1})$};
\node (B2) [right of=A2, node distance=4cm] {$D^0/\Mod(IC_{n+2})$};
\node (C2) [below of=B2, node distance=2.5cm] {$\Set$};

\draw[->] (A2) to node[above] {$\free$} (B2);
\draw[->] (A2) to node[left] {$\pi_0$} (C2);
\draw[->] (B2) to node[right] {$\pi_0$} (C2);

\end{scope}

\end{tikzpicture}
\]

\[
\begin{tikzpicture}[>=stealth, thick, node distance=4cm]

\node (A2) at (0,0) {$D^0/\ngpd$};
\node (B2) [right of=A2] {$D^0/\mathpzc{(n+1)Gpd}$};
\node (C2) [below of=B2, node distance=2.5cm] {$\Set$};

\draw[->] (A2) to node[above] {$D$} (B2);
\draw[->] (A2) to node[left] {$\pi_0$} (C2);
\draw[->] (B2) to node[right] {$\pi_0$} (C2);

\begin{scope}[xshift=7cm]

\node (A1) at (0,0) {$D^0/\Mod(IC_{n+2})$};
\node (B1) [right of=A1] {$D^0/\Mod(IC_{n+1})$};
\node (C1) [below of=B1, node distance=2.5cm] {$\Set$};

\draw[->] (A1) to node[above] {$U$} (B1);
\draw[->] (A1) to node[left] {$\pi_0$} (C1);
\draw[->] (B1) to node[right] {$\pi_0$} (C1);

\end{scope}

\end{tikzpicture}
\]
\end{lemma}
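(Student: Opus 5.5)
The plan is to reduce all four triangles to one observation: each functor involved is the identity on cells of dimension $0$ and $1$ (and on the pointing by $D^0$). Since $\pi_0(X)=\overline{X_0}$ depends only on $X_0$, $X_1$ and the maps $s,t:X_1\to X_0$, this gives the result. For the second and third-row triangles we also note that the functors send $D^0$ to $D^0$, so they induce functors on the slices $D^0/(-)$. This holds because $D^0$ in $\Mod(IC_{n+2})$ and in $\mathpzc{(n+1)Gpd}$ was defined as $\free(D^0)$, respectively $\tr(D^0)$, and the right adjoints $U$ and $D$ preserve the representable-at-$0$ object in degree $0$. Concretely, a map $D^0\to X$ is the same as an element of $X_0$ by Yoneda, and each functor preserves $X_0$.

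\textbf{Right adjoints.} For $U:\Mod(IC_{n+2})\to\Mod(IC_{n+1})$, $U$ is restriction along the theory map $IC_{n+1}\to IC_{n+2}$. That map is the identity on the objects $(0)$ and $(1)$ and sends $s,t$ to $s,t$, so $(UX)_k=X_k$ for all $k$ together with the globular maps. For $D:\ngpd\to\mathpzc{(n+1)Gpd}$, the description after the square of adjunctions gives $D(X)_i=X_i$ for $i\le n$. Since $n\ge 1$ here (for $n=0$ the statement reads $\mathpzc{0Gpd}\cong\Set$ and we handle it by the same direct check), $X_0$, $X_1$, $s$ and $t$ are unchanged. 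Hence $\overline{X_0}$ is unchanged, and the triangles for $U$ and $D$ commute on the nose, naturally in $X$.

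\textbf{Left adjoints.} The functor $\tr:\mathpzc{(n+1)Gpd}\to\ngpd$ is restriction along $\tr:IC_{n+2}^{\le n+1}\to IC_{n+1}^{\le n}$, or rather its left adjoint, which the paper calls truncation. On globular data it keeps dimensions $\le n$ and quotients dimension $n$ by the relation generated by $(n+1)$-cells. When $n\ge 1$, dimensions $0$ and $1$ are untouched and the result follows. When $n=0$, the $0$-cells are identified precisely when joined by $1$-cells, and that identification is exactly $\sim_0$, so $\pi_0$ is again preserved.

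For $\free:\Mod(IC_{n+1})\to\Mod(IC_{n+2})$, the extension $IC_{n+1}\to IC_{n+2}=R_{n+1}(IC_{n+1})$ only adds lifts $\delta_{f,g}:\vec p\to n+2$, that is, operations landing in dimension $n+2\ge 2$. Therefore the unit $X\to U\free X$ is a bijection in dimensions $\le n+1$. I would prove this via the explicit colimit formula for the free model, or via Lemma \ref{univ_prop_monad_unit}, by checking that the hom-sets $IC_{n+2}(\vec p,k)$ and $IC_{n+1}(\vec p,k)$ agree for $k\le n+1$. Applying the already established $U$-triangle then gives $\pi_0(\free X)=\pi_0(U\free X)=\pi_0(X)$.

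\textbf{Main obstacle.} This last step is the one I expect to be hardest: showing rigorously that freely adjoining lifts at height $n+1$ creates no new morphisms into objects of dimension $\le n+1$. The subtle point is that composites in the free theory through globular products might a priori produce new arrows $\vec p\to k$ with $k$ small. I would argue by the standard description of free theory extensions as colimits: every new arrow factors as an arrow into a globular product containing a $\delta$-target, followed by composition. Arrows out of $n+2$ into $k\le n+1$ all factor through $s$ or $t$, and $s\circ\delta=f$, $t\circ\delta=g$ already lie in $IC_{n+1}$. So no new arrows into low dimensions arise.
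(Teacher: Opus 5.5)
Your strategy is the same as the paper's. The paper simply calls the lemma immediate because the four functors preserve cells in low dimensions, and your treatment of $U$, $D$ and $\tr$ fills this in correctly. Two small corrections there:
\begin{itemize}
\item For $n=1$, the truncation $\mathpzc{2Gpd}\to\mathpzc{1Gpd}$ does change $1$-cells: it quotients them by $2$-cells. This is harmless, because the quotient preserves endpoints, so $\sim_0$ is unchanged.
\item $\pi_0$ on $\Mod(IC_{m+1})$ is defined as $\pi_0\circ\tr$. So the $U$-triangle also relies on your description of truncation, not just on $(UX)_k=X_k$.
\end{itemize}

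The gap is in the step you flagged for $\free$. Your claim that arrows out of $n+2$ into $k\le n+1$ factor through $s$ or $t$ is false. $IC_{n+1}$ already contains a composition $c:(n+1,n,n+1)\to n+1$ and an inverse $\mathrm{inv}:(n+1)\to n+1$, both lifts added by $R_n$. The arrow $c\circ\langle s,\mathrm{inv}\circ t\rangle:(n+2)\to n+1$ sends $\alpha:a\to b$ to $a\cdot b^{-1}$. It is neither of the form $\psi\circ s$ nor $\psi\circ t$, as you can see by evaluating it in a strict $\infty$-groupoid.

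Moreover, the arrows you must control leave globular sums $\vec r$ with some components equal to $n+2$, not just the object $n+2$. There is also no object of the theory representing the boundary of an $(n+2)$-cell through which such arrows could factor. What is true is that these arrows see their $(n+2)$-dimensional inputs only through $s$ and $t$. Making that precise needs an induction over the construction of $IC_{n+1}$. After that you would still need a formula for $\free$ (for example, that the left Kan extension is already a model) to pass from hom-sets to the unit $X\to U\free X$.

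You can avoid all of this. Write $\iota:IC_{n+1}\to IC_{n+2}$ for the inclusion. The commuting square of theories gives $\tr\circ\tr_{n+1}\circ\iota=\tr_n$. Restricting along both sides shows that the composite of right adjoints $\ngpd\xrightarrow{D}\mathpzc{(n+1)Gpd}\xrightarrow{D}\Mod(IC_{n+2})\xrightarrow{U}\Mod(IC_{n+1})$ equals $D:\ngpd\to\Mod(IC_{n+1})$. By uniqueness of left adjoints, $\tr\circ\tr\circ\free\cong\tr$ as functors $\Mod(IC_{n+1})\to\ngpd$. Hence, for a model $X$ of $IC_{n+1}$,
\[
\pi_0(\free X)=\pi_0(\tr\,\free X)=\pi_0(\tr\,\tr\,\free X)\cong\pi_0(\tr X)=\pi_0(X).
\]
The first and last equalities are the definitions of $\pi_0$ on $\Mod(IC_{n+2})$ and $\Mod(IC_{n+1})$. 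The second is your $\tr$-triangle, and the third is the isomorphism just obtained; the basepoint plays no role. So the $\free$-triangle follows formally from the $\tr$-triangle, with no analysis of the new arrows of $IC_{n+2}$.
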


\begin{lemma}
	For every $n\geq 1$, $0\leq k<n$ and $n$-groupoid $X$, the relation $\sim_k$ is an equivalence relation.
	For  $n\geq 2$ and $1\leq k<n$, this relation is compatible with the composition
	$c^k_{k-1}:X_k\times_{k-1}X_k\to X_k$, i.e there is an induced function
    \[
c_{k-1}^k:\overline{X_k} \times_{k-1} \overline{X_k} \to
	\overline{X_k}.
    \]
\end{lemma}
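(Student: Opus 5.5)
The plan is to follow the strategy of \cite{Dim1}, using only the low-dimensional coherence data that an $n$-groupoid inherits from $IC_{n+1}^{\leq n}$. Since $X$ is a globular product preserving functor $IC_{n+1}^{\leq n}\to\Set$, every operation of $IC_{n+1}^{\leq n}$ of the form $\vec{p}\to m$ gives a function $X(\vec{p})\to X_m$. Here $X(\vec{p})$ is the corresponding iterated fibre product of the $X_i$.

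The first step is to exhibit, for $0\leq k<n$, operations in $IC_{k+2}\subseteq IC_{n+1}$ and then push them forward along $\tr_n$.
\begin{itemize}
\item An identity $1_k:(k)\to(k+1)$. This is the lift $\delta_{1,1}$ of the admissible pair $(1_{(k)},1_{(k)})$ at height $k$.
\item An inverse $(k+1)\to(k+1)$. This is the lift of the admissible pair $(t,s)\colon (k+1)\to(k)$. For $k\geq1$, admissibility holds because $s\circ t=s\circ s$ and $t\circ t=t\circ s$.
\item A composite $c^{k+1}_k\colon (k+1,k,k+1)\to(k+1)$. This is the lift of the pair $(s\circ\pi_1,\ t\circ\pi_2)$, which is admissible by the globular relations.
\end{itemize}
Applying these operations to elements of $X$ gives reflexivity, symmetry and transitivity of $\sim_k$. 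This settles the first claim, provided $k+1\leq n$. That is exactly the hypothesis $k<n$, and $\tr_n$ preserves these lifts by construction.

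For compatibility, fix $1\leq k<n$. Suppose $(a,b)$ and $(a',b')$ are composable pairs in $X_k\times_{k-1}X_k$ with $a\sim_k a'$ via $\alpha$ and $b\sim_k b'$ via $\beta$. I would first reduce to changing one variable at a time, say $b=b'$, using transitivity, which is already proved.

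To handle the whiskering $a\mapsto c^k_{k-1}(a,b)$, I would use a lift of height $k$ with domain the table $(k+1,k-1,k)$. This table is $\vec{p}$ with $\hgt(\vec{p})=k+1\leq k+1$. The two parallel maps are $c^k_{k-1}\circ(s\times 1)$ and $c^k_{k-1}\circ(t\times 1)$, from $(k+1,k-1,k)$ to $(k)$. They form an admissible pair because their sources and targets agree: both are computed from the sources of $s\pi_1$ and $t\pi_1$, which coincide in dimension $k-1$, together with $\pi_3$. The lift gives an operation $(k+1,k-1,k)\to(k+1)$. Evaluating it at $(\alpha,b)$ yields a homotopy from $c(a,b)$ to $c(a',b)$, and the right variable is handled symmetrically. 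Hence $c^k_{k-1}$ descends to $\overline{X_k}\times_{k-1}\overline{X_k}$.

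The main obstacle is checking carefully that these pairs really are admissible in the specific stage of the inductive coherator, and that they survive $\tr_n$. In particular, the lift must be at height $k\leq n-1$, so that it is freely adjoined rather than collapsed. When $k=n-1$ the lift lands in dimension $n$, and $IC_{n+1}^{\leq n}$ still contains it before the quotient acts on height-$n$ pairs. I would confirm this against the definition of $IC_{n,i}$ and use Lemma \ref{univ_prop_monad_unit} to identify the images.
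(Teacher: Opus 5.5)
Your proof is correct and follows the argument the paper intends. The paper gives no proof of its own and defers to Ara \cite{Dim1}, where reflexivity, symmetry and transitivity come from identity, inverse and composition operations obtained as lifts in the contractible theory, exactly as in your proposal. The only difference is cosmetic: for compatibility, the standard argument changes both variables at once via the horizontal composite $\alpha *^{k+1}_{k-1}\beta$, the lift of $(c^k_{k-1}\circ(s\times s),\,c^k_{k-1}\circ(t\times t))$ on the table $(k+1,k-1,k+1)$, whereas you whisker one variable at a time and then use transitivity. Also, your height-$k$ lifts already exist in $IC_{k+1}=R_k(IC_k)$, so writing $IC_{k+2}$ is a harmless off-by-one.
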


\begin{construction}\label{groupoid_on_n_level}
	Fix $n\geq 2$ and $1\leq k<n$. The maps
	\[
	s, t : X_k \to X_{k-1},
	\quad
	Z =X(Z): X_{k-1} \to X_k
	\]
	 induce maps
	\[
	s,t : \overline{X_k} \to X_{k-1},
	\quad
	Z : X_{k-1} \to \overline{X_k}.
	\]
	By the previous lemma, the map
	\[ c^k_{k-1} : X_k \times_{k-1} X_k \to X_k \]
	induces a map
	\[
	c^k_{k-1} : \overline{X_k} \times_{k-1} \overline{X_k} \to
	\overline{X_k}.
	\]
	We use $\ol{\omega_k}(X)$ to denote the graph
	\[
	\xymatrix{
		\overline{X_k} \ar@<.6ex>[r]^-{s} \ar@<-.6ex>[r]_-{t} &
		X_{k-1},
	}
	\]
	endowed together with the maps
	\[
	c^k_{k-1} : \overline{X_k} \times_{k-1} \overline{X_k} \to
	\overline{X_k}
	\quad\text{and}\quad
	Z : X_{k-1} \to \overline{X_k}.
	\]
\end{construction}

\begin{proposition}
	For every $n\geq 2$ and $1\leq k<n$, $\ol{\omega_k}(X)$ is a groupoid.
\end{proposition}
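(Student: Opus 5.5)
We have to check the groupoid axioms for $\ol{\omega_k}(X)$: associativity and unitality of $c^k_{k-1}$ on $\overline{X_k}$, and existence of inverses. The plan is to import the corresponding operations from the coherator. In $IC_{n+1}$, hence in $IC_{n+1}^{\leq n}$ via $\tr_n$, the lifts $\delta_{f,g}$ supply specific $(k+1)$-cells witnessing each axiom. A model $X$ preserves globular products, so each such operation becomes a function on globular products of $X$. Its value on given cells is then an element of $X_{k+1}$ with the required source and target, and therefore a homotopy. Because $k<n$, dimension $k+1\le n$ is still present in $X$ and has not been collapsed. This is the key point: only dimension $n$ is quotiented in $IC_{n+1}^{\leq n}$.

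\textbf{Step 1 (associativity).} Take the table of dimensions $\vec{p}=(k,k-1,k,k-1,k)$ and the two maps $\vec{p}\to k$ in $IC_k$ obtained by composing $c^k_{k-1}$ in the two bracketings. Their sources and targets agree in dimension $k-1$, since both compute the source of the first cell and the target of the last. By the lemma that theory maps preserve admissible pairs, this is an admissible pair at height $k$ in $IC_{k}$. Then $IC_{k+1}=R_k(IC_k)$ contains a lift $\delta:\vec{p}\to k+1$. Transporting it to $IC_{n+1}^{\leq n}$ by $\tr_n$ (legitimate since $k+1\leq n$) and evaluating at $X$ gives, for composable $u,v,w$, a homotopy between $(uv)w$ and $u(vw)$. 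Since the preceding lemma shows $c^k_{k-1}$ descends to $\overline{X_k}$, the two bracketings agree there.

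\textbf{Step 2 (units).} Proceed the same way with the pairs $\big(c^k_{k-1}\circ(Z t,\,\id),\ \id\big)$ and $\big(c^k_{k-1}\circ(\id,\,Z s),\ \id\big)$ on $(k)$. These are admissible at height $k$ because $Z$ is a section of both $s$ and $t$. Their lifts give the left and right unit laws in $\overline{X_k}$.

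\textbf{Step 3 (inverses).} First I need an inverse operation $\iota:k\to k$ whose source is $t$ and whose target is $s$. I obtain it as the lift of the admissible pair $(t,s):(k-1)\to k-1$ on the table $(k-1)$, or for $k=1$ from the pair on $(1)$; this lift lies in $IC_k$ at height $k-1$. Next, the pairs $\big(c^k_{k-1}\circ(\id,\iota),\,Z s\big)$ and $\big(c^k_{k-1}\circ(\iota,\id),\,Z t\big)$ are admissible at height $k$, so their lifts give the inverse laws up to $\sim_k$.

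The main obstacle is bookkeeping rather than any idea. I must verify that each of these pairs is really admissible, meaning that the boundaries match exactly in $IC_k$ and not merely up to homotopy. This requires that $s\circ c^k_{k-1}$ and $t\circ c^k_{k-1}$ equal the expected projections composed with $s$ and $t$ on the nose; I expect this holds because $c^k_{k-1}$ was itself chosen as a lift. I must also verify that $\tr_n$ sends these lifts to genuine $(k+1)$-cells rather than identifying them, which follows from the construction of $\tr_n$ since identification happens only at height $n$ and $k<n$. With these checks done, $\ol{\omega_k}(X)$ is a groupoid.
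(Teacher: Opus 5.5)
Your argument is correct and is essentially what the paper relies on: the paper simply cites Ara's Proposition 4.5, and your argument (contractibility of the coherator supplies $(k+1)$-cells witnessing associativity, unitality and inverses up to $\sim_k$, which then become equalities in $\overline{X_k}$, and these cells survive $\tr_n$ because $k+1\leq n$) is the standard argument behind that result, translated to $n$-groupoids, a translation the paper leaves implicit. Two bookkeeping slips should be fixed: first, $\iota$ is the lift of the pair $(t,s)$ of maps $(k)\to k-1$ on the table $(k)$, not $(k-1)$, and this works uniformly for all $k\geq 1$; second, your Step 2 forces a convention in which $c^k_{k-1}\circ(Zt,\id)$ is defined, so $c^k_{k-1}(a,b)$ has source $s(b)$ and target $t(a)$, and with that convention the inverse pairs must read $\big(c^k_{k-1}\circ(\id,\iota),\,Zt\big)$ and $\big(c^k_{k-1}\circ(\iota,\id),\,Zs\big)$.
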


\begin{proof}
This is Proposition 4.5 of \cite{Dim1}.
\end{proof}

\begin{proposition}
	Construction \ref{groupoid_on_n_level} extends to a functor
    \[
    \ol{\omega_k}:\ngpd\to\Gpd
    \]
    for all $n\geq 2$ and $1\leq k<n$.
\end{proposition}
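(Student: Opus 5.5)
We need to show that Construction \ref{groupoid_on_n_level} is functorial: a morphism $F:X\to Y$ of $n$-groupoids should induce a functor $\ol{\omega_k}(F):\ol{\omega_k}(X)\to\ol{\omega_k}(Y)$, and this assignment should preserve identities and composition. Here $F$ is a natural transformation between globular-product-preserving functors $IC_{n+1}^{\leq n}\to\Set$. The plan is to define $\ol{\omega_k}(F)$ on objects by $F_{k-1}:X_{k-1}\to Y_{k-1}$ and on arrows by the map $\ol{X_k}\to\ol{Y_k}$ induced by $F_k:X_k\to Y_k$. Then check, in this order, that it is well defined, that it is a morphism of graphs, that it is compatible with composition and units, and that it is functorial in $F$.

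\textbf{Well-definedness.} If $u\sim_k v$ in $X_k$, choose a homotopy $\alpha\in X_{k+1}$ with $s(\alpha)=u$ and $t(\alpha)=v$; this makes sense because $k+1\leq n$. The operations $s,t:X_{k+1}\to X_k$ are the images under $X$ of the maps $s,t:k+1\to k$ of $IC_{n+1}^{\leq n}$ coming from $D$. Naturality of $F$ along these maps gives $s(F_{k+1}\alpha)=F_k u$ and $t(F_{k+1}\alpha)=F_k v$. So $F_k u\sim_k F_k v$, and $F_k$ descends to $\ol{F_k}:\ol{X_k}\to\ol{Y_k}$. The same naturality along $s,t:k\to k-1$ shows that $\ol{F_k}$ commutes with the induced source and target maps, so $(\ol{F_k},F_{k-1})$ is a morphism of graphs.

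\textbf{Compatibility with composition and units.} Apply naturality of $F$ to the operations $c^k_{k-1}:(k,k-1,k)\to k$ and $Z:k-1\to k$ of the theory. Since $X$ and $Y$ preserve globular products, $X(k,k-1,k)\cong X_k\times_{k-1}X_k$, and under this identification the component $F_{(k,k-1,k)}$ is $F_k\times_{F_{k-1}}F_k$. This holds because $F$ commutes with the projections out of the globular product, which are images of the canonical maps in $\Theta_0^\op$. Hence
\[
F_k\circ c^k_{k-1}=c^k_{k-1}\circ(F_k\times_{F_{k-1}}F_k),\qquad F_k\circ Z=Z\circ F_{k-1}.
\]
Both equations pass to the quotients because the quotient maps $X_k\to\ol{X_k}$ are surjective and the induced operations on $\ol{X_k}$ are defined via representatives. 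It follows that $\ol{\omega_k}(F)$ preserves composition and identities, so it is a functor of groupoids.

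\textbf{Functoriality in $F$.} The equalities $\ol{\omega_k}(1_X)=1$ and $\ol{\omega_k}(G\circ F)=\ol{\omega_k}(G)\circ\ol{\omega_k}(F)$ hold componentwise before passing to quotients, and quotienting preserves them.

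The only step needing care is identifying $F_{(k,k-1,k)}$ with the fibered product map, which uses that $F$ is natural with respect to the projections from $(k,k-1,k)$ in $IC_{n+1}^{\leq n}$. I expect no real obstacle beyond this; the argument mirrors Proposition 4.5 of \cite{Dim1}.
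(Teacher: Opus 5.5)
Your proposal is correct and follows the same route as the paper: the paper also notes that a morphism of $n$-groupoids preserves homotopies between $k$-cells, hence induces a morphism of graphs $\ol{\omega_k}(X)\to\ol{\omega_k}(Y)$, and that naturality makes this a functor. You supply the details the paper leaves implicit, namely identifying the component at the globular product $(k,k-1,k)$ with $F_k\times_{F_{k-1}}F_k$ and passing the naturality squares for $c^k_{k-1}$ and $Z$ down to the quotients.
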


\begin{proof}
	Let $f:X\to Y$ be a morphism of $n$-groupoids. Such a
	morphism induces a morphism of globular sets between the underlying $n$-globular
	sets and in particular respects the notion of homotopy between $k$-arrows.
	It follows that for any $n\geq 2$ and $1\leq k<n$, $f$ induces a morphism of graphs
	\[ \ol{\omega_k}(f):\ol{\omega_k}(X)\to\ol{\omega_k}(Y). \]
	The naturality of $f$ implies that $\ol{\omega_k}(f)$ is actually a functor. Therefore, we obtain a functor
	\[
	\ol{\omega_k}:\ngpd \to \Gpd.
	\]
\end{proof}

\begin{construction}
	Let  $n\geq 2$, $1\leq k<n$, $X$ be an $n$-groupoid and let $u, v$ be two parallel $(k-1)$-arrows of $X$. We will denote by
	$\Hom_X(u, v)$ the set of $k$-arrows of $X$ from $u$ to $v$. We set
	\[
	\pi_k(X, u, v) = \Hom_{\ol{\omega_k}(X)}(u, v)
	\quad\text{and}\quad
	\pi_k(X, u) = \pi_k(X, u, u).
	\]
\end{construction}

	The set $\pi_k(X, u, v)$ is nothing but the quotient of $\Hom_X(u, v)$ by
	the equivalence relation $\sim_k$. Note that $\pi_k(X, u)$ is
	endowed with a group structure.

\begin{construction}
	If $n\geq 2$, $1\leq k<n$, $X$ be an $n$-groupoid and $x\in X_0$, we define the \emph{$k$-th
		homotopy group} of $(X, x)$ as
	\[ \pi_k(X, x) = \pi_k(X,Z^{k-1}(x)). \]
\end{construction}

\begin{construction}
Let $n\geq 1$, $X$ be an $n$-groupoid, and $u,v\in X_{n-1}$. Define 
\[
\pi_{n}(X,u,v)=\{\alpha\in X_n:s(\alpha)=u, t(\alpha)=v\}\text{ and }\pi_n(X,u)=\pi_n(X,u,u)
\]
\end{construction}

\begin{construction}
   Let $n\geq 1$, $X$ be an $n$-groupoid, and $x\in X_{0}$. Define the $n$th homotopy group of $(X,x)$ as
   \[
   \pi_n(X,x)=\pi_n(X,Z^{n-1}(X))
   \]
\end{construction}

\begin{remark}
	The Eckmann-Hilton argument shows that for $n\geq 2$ and $2\leq k\leq n$, the group
	$\pi_k(X, x)$ is abelian. Moreover, using that  $\ol{\omega_k}$ is a functor from the
	category of $n$-groupoids to groupoids for $k\geq 1$, we get that
	\begin{itemize}
        \item $\pi_1$ is a a functor from the category of
		$n$-groupoids endowed with a $0$-cell
		to the category of groups, and 
		\item $\pi_n$ is a functor from the category of
		$n$-groupoids endowed with a $0$-cell
		to the category of abelian groups.
	\end{itemize}
    This is all from Section 4.11 of \cite{Dim1} but translated to $n$-groupoids.
\end{remark}

This means that for $n\ge 1$, we have a functor
\[
\pi_1:D^0/\ngpd\to\Grp.
\]

For $n\geq 1$, we define $\pi_1:D^0/\Mod(IC_{n+1})\to\Grp$ to be the composite
\[
D^0/\Mod(IC_{n+1})\xrightarrow{\tr^*}D^0/\ngpd\xrightarrow{\pi_1}\Grp
\]

\begin{lemma}\label{first_hom_group_transfer}
Let $n\geq 1$. The following diagrams commute.

\[
\begin{tikzpicture}[>=stealth, thick, node distance=3cm]

\node (A1) at (0,0) {$D^0/\mathpzc{(n+1)Gpd}$};
\node (B1) [right of=A1] {$D^0/\ngpd$};
\node (C1) [below of=B1, node distance=2.5cm] {$\Grp$};

\draw[->] (A1) to node[above] {$\tr$} (B1);
\draw[->] (A1) to node[left] {$\pi_1$} (C1);
\draw[->] (B1) to node[right] {$\pi_1$} (C1);

\begin{scope}[xshift=6.5cm]

\node (A2) at (0,0) {$D^0/\Mod(IC_{n+1})$};
\node (B2) [right of=A2, node distance=4cm] {$D^0/\Mod(IC_{n+2})$};
\node (C2) [below of=B2, node distance=2.5cm] {$\Grp$};

\draw[->] (A2) to node[above] {$\free$} (B2);
\draw[->] (A2) to node[left] {$\pi_1$} (C2);
\draw[->] (B2) to node[right] {$\pi_1$} (C2);

\end{scope}

\end{tikzpicture}
\]

\[
\begin{tikzpicture}[>=stealth, thick, node distance=4cm]

\node (A2) at (0,0) {$D^0/\ngpd$};
\node (B2) [right of=A2] {$D^0/\mathpzc{(n+1)Gpd}$};
\node (C2) [below of=B2, node distance=2.5cm] {$\Grp$};

\draw[->] (A2) to node[above] {$D$} (B2);
\draw[->] (A2) to node[left] {$\pi_1$} (C2);
\draw[->] (B2) to node[right] {$\pi_1$} (C2);

\begin{scope}[xshift=7cm]

\node (A1) at (0,0) {$D^0/\Mod(IC_{n+2})$};
\node (B1) [right of=A1] {$D^0/\Mod(IC_{n+1})$};
\node (C1) [below of=B1, node distance=2.5cm] {$\Grp$};

\draw[->] (A1) to node[above] {$U$} (B1);
\draw[->] (A1) to node[left] {$\pi_1$} (C1);
\draw[->] (B1) to node[right] {$\pi_1$} (C1);

\end{scope}

\end{tikzpicture}
\]
\end{lemma}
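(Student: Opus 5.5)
The plan is to reduce all four triangles to a single fact. Each functor in question preserves, on the nose, the underlying globular cells in dimensions $0$ and $1$. It also preserves the operations $s$, $t$, $Z$ and $c^1_0$ in those dimensions. Finally, it preserves enough $2$-cells to keep the homotopy relation $\sim_1$ on $1$-cells. Since $\pi_1(X,x)$ is by definition $\Hom_{\ol{\omega_1}(X)}(x,x)$, with composition induced by $c^1_0$, it depends only on $X_0$, $X_1$, $s$, $t$, $Z$, $c^1_0$ and the relation $\sim_1$. Any functor preserving these data yields a natural isomorphism, in fact an equality, of the resulting groups. Throughout we assume $n\geq 1$, so that $1<n+1$. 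Then $\pi_1$ on $\mathpzc{(n+1)Gpd}$ is computed by the quotient construction $\ol{\omega_1}$. On $\ngpd$ it is computed either by $\ol{\omega_1}$ (when $n\geq 2$) or by the top-dimensional construction (when $n=1$).

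First, the two triangles involving $\tr$ and $D$ between $\ngpd$ and $\mathpzc{(n+1)Gpd}$. The functor $D$ satisfies $D(X)_i=X_i$ for $i\leq n$, with unique higher cells. The operations below dimension $n$ are untouched because $\tr\colon IC_{n+2}^{\leq n+1}\to IC_{n+1}^{\leq n}$ restricts to the identity on the generators of height $<n$. The relation $\sim_1$ is therefore preserved when $n\geq 2$. When $n=1$, there are two cases. In $\mathpzc{2Gpd}$ the set $\ol{X_1}$ is the quotient by $2$-cells. In $D(X)$ the $2$-cells are unique parallel fillers, so $\sim_1$ is the equality of $1$-cells. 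This matches the top-dimensional definition of $\pi_1$ on $\mathpzc{1Gpd}$. For $\tr$, the truncation identifies parallel $n$-cells (in $IC_{n+1}^{\leq n}$ the lifts at height $n$ become $[f]$). This is exactly the passage to $\sim_n$-classes, so for $n=1$ it turns $X_1$ into $\ol{X_1}$, and for $n\geq 2$ it leaves dimensions $\leq 1$ and the $2$-cell relation intact. In both cases the maps on $\pi_1$ are identities.

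Second, the triangles for $\free$ and $U$ between $D^0/\Mod(IC_{n+1})$ and $D^0/\Mod(IC_{n+2})$. By definition, $\pi_1$ on $\Mod(IC_{n+1})$ is $\pi_1\circ\tr^*$. The $U$-triangle then reduces to the commuting square of right adjoints $\tr\circ U = \tr\circ\tr\circ\ldots$, that is, the square $IC_{n+1}\to IC_{n+2}$, $\tr_n$, $\tr_{n+1}$, $\tr$ constructed in Section 6, together with the $\tr$-triangle just proved. Precomposing models along the commuting square gives $\tr_n^*\circ U = \tr^*\circ\tr_{n+1}^*$ as functors into $\ngpd$, and then the first step applies.

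The $\free$-triangle is the main obstacle, since a left adjoint need not preserve low-dimensional cells. Here I would use that $IC_{n+1}\to IC_{n+2}$ only adjoins lifts $\delta_{f,g}\colon\vec{p}\to n+2$ at height $n+1$. Consequently, every hom-set into an object of height $\leq n+1$ is unchanged, and the unit $X\to U\free X$ is bijective on $k$-cells for $k\leq n+1$. It suffices to know this for $k\leq 2$, which holds since $n\geq 1$. The new $(n+2)$-cells can only affect $\sim_{n+1}$, which is irrelevant for $\pi_1$ when $n\geq 1$. Hence $\pi_1(\free X)=\pi_1(U\free X)=\pi_1(X)$ via the $U$-triangle. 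I would verify the bijectivity claim by an explicit description of $\free X$ as the left Kan extension, computed as a colimit over the comma category. If that is too heavy, I would instead argue via the universal property of Lemma \ref{univ_prop_monad_unit}: maps out of $IC_{n+2}$ agree with maps out of $IC_{n+1}$ on all objects of height $\leq n+1$.
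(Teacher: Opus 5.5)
Your guiding principle is the paper's own one-line justification (preservation of low-dimensional cells): $\pi_1$ sees only the $0$-, $1$- and $2$-cells, the operations $s,t,Z,c^1_0$ and the relation $\sim_1$, and the four functors preserve these. Your separate treatment of $n=1$ is more careful than the paper's. However, the $U$-triangle step fails as written.

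Write $j\colon IC_{n+1}\to IC_{n+2}$. Precomposing along the commuting square $\tr\circ\tr_{n+1}\circ j=\tr_n$ gives an equation between \emph{restriction} functors, namely $U\circ D\circ D=D\colon\ngpd\to\Mod(IC_{n+1})$. By uniqueness of adjoints it also gives $\tr\circ\tr\circ\free\cong\tr\colon\Mod(IC_{n+1})\to\ngpd$. It says nothing about the mixed composite $\tr\circ U$, a left adjoint after a right adjoint. Note also that the $\tr$'s in your ``square of right adjoints $\tr\circ U=\tr\circ\tr$'' are in fact the left adjoints. The identity $\tr_n^*\circ U=\tr^*\circ\tr_{n+1}^*$ you invoke is a Beck--Chevalley-type statement that must be checked on cells; it cannot be read off the square. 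Your $\free$-argument ends ``via the $U$-triangle'', so it inherits this gap. It also rests on the bijectivity of $X\to U\free X$ in dimensions $\leq n+1$, which you only sketch; a pointwise left Kan extension would still have to be shown to preserve globular products.

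The fix is to swap the roles of the two horizontal triangles. The $\free$-triangle is the formal one. For a pointed model $X$ over $IC_{n+1}$,
\[
\pi_1(\tr(\free X))\cong\pi_1(\tr(\tr(\free X)))\cong\pi_1(\tr(X)),
\]
with the $\pi_1$'s taken in $\mathpzc{(n+1)Gpd}$, $\ngpd$, $\ngpd$ respectively. The first isomorphism is your $\tr$-triangle for $\mathpzc{(n+1)Gpd}\to\ngpd$, and the second is $\tr\circ\tr\circ\free\cong\tr$. These left adjoints preserve $D^0$, so they pass to coslices, and no knowledge of the cells of $\free X$ is needed.

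The $U$-triangle is the one that uses cells, and it follows directly from your own description of truncation. $U$ changes neither the underlying globular set nor $s,t,Z,c^1_0$, which already live in $IC_1$. Truncation is the identity below dimension $n$ and the quotient by $\sim_n$ in dimension $n$. Hence $\tr(UX)$ and $\tr(X)$ have the same $0$- and $1$-cells and the same $\sim_1$, with your $n=1$ versus $n\geq 2$ split.

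State that description correctly, though. Truncation identifies $n$-cells joined by an $(n+1)$-cell, because $\tr_n$ sends the object $n+1$ to $n$ and $s,t$ to identities. It does not identify parallel $n$-cells, which would kill $\pi_n$; identifying parallel \emph{operations} in $IC_{n+1}^{\leq n}$ is a different matter. Likewise, for $n=1$ the $2$-cells of $D(X)$ are only identities, not ``unique parallel fillers''. Unique parallel fillers would make $\sim_1$ the total relation on parallel pairs rather than equality; the paper's ``$D(X)(a,b)=*$'' is loose in the same way.
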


For $n\geq 2$ and $2\leq k\leq n$, we have a functor
\[
\pi_k:D^0/\ngpd\to \Ab.
\]
For $n\geq 2$ and $2\leq k\leq n$, we define $\pi_k:D^0/\Mod(IC_{n+1})\to\Grp$ to be the composite
\[
D^0/\Mod(IC_{n+1})\xrightarrow{\tr^*}D^0/\ngpd\xrightarrow{\pi_k}\Grp
\]

\begin{lemma}\label{higher_hom_group_transfer}
Let $n\geq 2$. The following diagrams commute.

\[
\begin{tikzpicture}[>=stealth, thick, node distance=3cm]

\node (A1) at (0,0) {$D^0/\mathpzc{(n+1)Gpd}$};
\node (B1) [right of=A1] {$D^0/\ngpd$};
\node (C1) [below of=B1, node distance=2.5cm] {$\Ab$};

\draw[->] (A1) to node[above] {$\tr$} (B1);
\draw[->] (A1) to node[left] {$\pi_k$} (C1);
\draw[->] (B1) to node[right] {$\pi_k$} (C1);

\begin{scope}[xshift=6.5cm]

\node (A2) at (0,0) {$D^0/\Mod(IC_{n+1})$};
\node (B2) [right of=A2, node distance=4cm] {$D^0/\Mod(IC_{n+2})$};
\node (C2) [below of=B2, node distance=2.5cm] {$\Grp$};

\draw[->] (A2) to node[above] {$\free$} (B2);
\draw[->] (A2) to node[left] {$\pi_k$} (C2);
\draw[->] (B2) to node[right] {$\pi_k$} (C2);

\end{scope}

\end{tikzpicture}
\]

\[
\begin{tikzpicture}[>=stealth, thick, node distance=4cm]

\node (A2) at (0,0) {$D^0/\ngpd$};
\node (B2) [right of=A2] {$D^0/\mathpzc{(n+1)Gpd}$};
\node (C2) [below of=B2, node distance=2.5cm] {$\Ab$};

\draw[->] (A2) to node[above] {$D$} (B2);
\draw[->] (A2) to node[left] {$\pi_k$} (C2);
\draw[->] (B2) to node[right] {$\pi_k$} (C2);

\begin{scope}[xshift=7cm]

\node (A1) at (0,0) {$D^0/\Mod(IC_{n+2})$};
\node (B1) [right of=A1] {$D^0/\Mod(IC_{n+1})$};
\node (C1) [below of=B1, node distance=2.5cm] {$\Ab$};

\draw[->] (A1) to node[above] {$U$} (B1);
\draw[->] (A1) to node[left] {$\pi_k$} (C1);
\draw[->] (B1) to node[right] {$\pi_k$} (C1);

\end{scope}

\end{tikzpicture}
\]
\end{lemma}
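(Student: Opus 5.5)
The statement is Lemma \ref{higher_hom_group_transfer}. I would prove it the same way as Lemma \ref{path_components_transfer} and Lemma \ref{first_hom_group_transfer}. The idea is that all four functors ($\tr$, $D$, $\free$, $U$) leave unchanged the cells of dimension at most $n$, together with the operations that $\pi_k$ uses for $2\leq k\leq n$. Those operations are $s,t$, the identity maps $Z$, the composition $c^k_{k-1}$, and the homotopy relation $\sim_k$ (which needs $(k+1)$-cells when $k<n$). Since $\pi_k$ on $D^0/\Mod(IC_{n+1})$ is \emph{defined} as $\pi_k\circ\tr^*$, each triangle reduces to a statement about $n$-groupoids, or to comparing truncations.

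\textbf{Truncation triangle.} Let $X$ be an $(n+1)$-groupoid, $x\in X_0$, and $2\leq k\leq n$. The globular set underlying $\tr X$ agrees with that of $X$ in dimensions $\le n-1$. In dimension $n$, $(\tr X)_n = X_n/\sim_n$, because $\tr$ identifies the lifts $\delta_{f,g}$ with $[f]$, and so identifies parallel $n$-cells joined by an $(n+1)$-cell.

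\begin{itemize}
\item For $k<n$: $\pi_k(X,x)$ uses only $X_k$, $X_{k+1}$ and $c^k_{k-1}$. If $k+1<n$, these are unchanged by $\tr$. If $k+1=n$, quotienting $X_n$ does not change which $k$-cells are connected by an $n$-cell, so $\sim_k$ is unchanged.
\item For $k=n$: $\pi_n(\tr X,x)$ is the set of $n$-cells of $\tr X$ with source and target $Z^{n-1}x$. This is exactly $\Hom_X(Z^{n-1}x,Z^{n-1}x)/\sim_n=\pi_n(X,x)$.
\end{itemize}

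In both cases the group law is induced by $c^k_{k-1}$, which $\tr$ preserves because $\tr$ is induced by a globular-product-preserving map of theories sending the composition operation to the composition operation. Naturality in pointed maps is immediate.

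\textbf{Free and forgetful triangles.} For $U$, the square of right adjoints commutes, so $\tr\circ U = U\circ\tr$ up to the identification of the $n$-dimensional data, and we conclude from the truncation case. For $\free$, the square of left adjoints gives $\tr_{n+1}\circ\free\cong D\circ\tr_n$ on the relevant truncation: $\free$ only freely adds lifts to admissible pairs at height $n+1$, and those become trivial after truncating to $n$-groupoids. Hence $\tr\free X\cong\tr X$ in $\ngpd$, and the triangle follows. For $D$, we have $\tr D=1$ by the earlier lemma, and $D$ adds only trivial $(n+1)$-cells $D(X)(a,b)=*$. Therefore $\sim_n$ on $D(X)_n$ is equality precisely for parallel pairs, which recovers $\pi_n(X,x)$.

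\textbf{Expected obstacle.} The delicate point is the $\free$ triangle, specifically verifying $\tr\circ\free\cong\tr$ on $D^0/\Mod(IC_{n+1})$. I would check it by comparing left adjoints. The truncation $\Mod(IC_{n+2})\to\ngpd$ factors in two ways via the commuting square of theories $\tr\circ\tr_{n+1}=\tr_n$ composed with $IC_{n+1}\to IC_{n+2}$. Left Kan extension along the theory maps is functorial, so the two composite left adjoints agree up to natural isomorphism. This gives the needed isomorphism, compatible with the basepoint.
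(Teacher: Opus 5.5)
Your proposal takes essentially the same approach as the paper, whose entire proof is the remark that the four functors preserve the cells and structure below dimension $n$ on which $\pi_k$ depends. Your description of $\tr$ (cells of dimension $<n$ unchanged, $n$-cells quotiented by $\sim_n$) fills this in correctly, as does your reduction of the $\free$ and $D$ triangles to the truncation triangle via $\tr\circ\tr_{n+1}\circ\free\cong\tr_n$ and $\tr\circ D\cong 1$.

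The one step whose stated reason does not work is the $U$ triangle. Commutation of the square of right adjoints does not formally give $\tr_n\circ U\cong\tr\circ\tr_{n+1}$, since that is a mixed, Beck--Chevalley-type statement. Argue it directly instead: $U$ changes neither the cells nor $s,t,Z,c^k_{k-1}$, and both $\tr_n(UY)$ and $\tr(\tr_{n+1}Y)$ have the cells of $Y$ below dimension $n$ and $Y_n/\sim_n$ in dimension $n$.
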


The previous two lemmas are true for the same reason Lemma \ref{path_components_transfer} is true. 

\section{Weak Equivalences}
We now provide background on weak equivalences. Once again, we translate the material presented in this section from \cite{Dim1} to $n$-groupoids. The only new material here, where new is used with large quotation marks, is Definition \ref{The Pushout Condition} and Definition \ref{The Free Pushout Condition}. We say in large quotation marks because the definitions help us formulate the pushout conjecture in terms of certain definitions being satisfied for every $\infty$-groupoid.
\begin{definition}
Let $n\geq 0$. We say that a map $f:X\to Y$ of $n$-groupoids is a \emph{weak equivalence} if 
\begin{itemize}
    \item $\pi_0(f)$ is a bijection on path components and 
    \item if $\pi_k(f):\pi_k(X,x)\to \pi_k(Y,f(x))$ is an isomorphism of groups for $1\leq k\leq n$ and $x\in X_0$ when $n\geq 1$.
\end{itemize}
\end{definition}

\begin{notation}\label{weak_eq_of_n_gpds}
Let $n\geq 0$. We write $\zW$ be the collection of all weak equivalences in $\ngpd$.
\end{notation}

\begin{lemma}
Let $n\geq 1$. If $f:X\to Y$ is a weak equivalence of $(n+1)$-groupoids. Then $\tr(f)$ is a weak equivalence of $n$-groupoids. Moreover, if $f:X\to Y$ is a weak equivalence of $n$-groupoids, then $D(f)$ is a weak equivalence of $(n+1)$-groupoids.
\end{lemma}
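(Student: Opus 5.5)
The plan is to verify the weak-equivalence conditions directly, one homotopy invariant at a time, using the transfer lemmas already established: Lemma \ref{path_components_transfer}, Lemma \ref{first_hom_group_transfer} and Lemma \ref{higher_hom_group_transfer}. Those lemmas give, for the truncation $\tr:\mathpzc{(n+1)Gpd}\to\ngpd$ and for $D:\ngpd\to\mathpzc{(n+1)Gpd}$, natural identifications
\[
\pi_0\circ\tr\cong\pi_0,\qquad \pi_k\circ\tr\cong\pi_k,\qquad \pi_0\circ D\cong\pi_0,\qquad \pi_k\circ D\cong\pi_k .
\]
Here the indices $k$ range over those for which both sides are defined. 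Naturality in the map is the key point. Applied to $f$ and to its image, naturality produces commutative squares in which $\pi_k(\tr f)$ (resp. $\pi_k(Df)$) is conjugate to $\pi_k(f)$ by isomorphisms. Hence one is a bijection or isomorphism exactly when the other is.

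For the first claim, let $f$ be a weak equivalence of $(n+1)$-groupoids. The truncation does not change $0$-cells, so the basepoints $x\in(\tr X)_0=X_0$ range over the same set. Then $\pi_0(\tr f)=\pi_0(f)$ is a bijection. For $1\le k<n$, the squares identify $\pi_k(\tr f)$ with $\pi_k(f)$, which is an isomorphism.

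The only genuinely new case is the top dimension $k=n$. In $\tr X$ the group $\pi_n$ is computed as the set of $n$-cells $\alpha$ with $s(\alpha)=t(\alpha)=Z^{n-1}(x)$, taken without quotient. The $n$-cells of $\tr X$ are obtained by identifying admissible pairs at height $n$, which amounts to identifying $n$-cells connected by an $(n+1)$-cell. So $\pi_n(\tr X,x)$ should be exactly $\overline{X_n}$ restricted to loops at $Z^{n-1}(x)$, that is, $\pi_n(X,x)$. I expect this identification to be the main obstacle. It is where one must check that the quotient $IC_{n+1,n+1}\to IC_{n+2}^{\leq n+1}$, together with the left adjoint $\tr$ on models, really sends $X_n$ to $X_n/\sim_n$ rather than to some coarser or finer quotient. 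I would argue this by computing $\tr X$ as the left Kan extension. Since $\tr$ is identity on objects below height $n$, and the theory $IC_{n+1}^{\leq n}$ is generated over $IC_{n+1,n+1}$ by the relations $f=g$ for parallel $n$-cells, the image of $X_n$ is the coequalizer of $s,t:X_{n+1}\rightrightarrows X_n$. That coequalizer is $\overline{X_n}$, because $\sim_n$ is already an equivalence relation. Once this is in place, the $k=n$ square commutes, which is precisely the content of Lemma \ref{higher_hom_group_transfer} (or Lemma \ref{first_hom_group_transfer} when $n=1$), and $\pi_n(\tr f)$ is an isomorphism.

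For the second claim, let $f$ be a weak equivalence of $n$-groupoids. The functor $D$ keeps cells in dimensions $\le n$, and its $(n+1)$-cells are the unique cells $*$ between any pair of $n$-cells $a,b$. Thus $\pi_0$ and $\pi_k$ for $k<n$ are unchanged, by the same lemmas. For $k=n$, any two parallel $n$-cells of $DX$ are now homotopic, so $\pi_n(DX,x)$ is the trivial group. The $n$-groupoid $X$ likewise has $\pi_n$ computed with no quotient in its top dimension. Here I must compare the top group $\pi_n(X,x)$ with $\pi_n(DX,x)$. The triviality of both, or the identification asserted in Lemma \ref{higher_hom_group_transfer}, must be reconciled by using the definitions as stated in the paper. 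I would take the lemma's identification at face value and note that $\pi_{n+1}(DX,x)$ is a point on both sides. Hence $\pi_{n+1}(Df)$ is trivially an isomorphism, which completes the verification.
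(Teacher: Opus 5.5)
Your overall route matches the paper, whose proof simply combines Lemmas \ref{path_components_transfer}, \ref{first_hom_group_transfer} and \ref{higher_hom_group_transfer}. For the $\tr$ half, your coequalizer argument that $(\tr X)_n=\overline{X_n}$ is a reasonable way to justify the top-dimensional case, which the paper leaves implicit.

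The gap is the $k=n$ step for $D$. You first compute, from the gloss ``$D(X)(a,b)=*$ for all $a,b$'', that $\pi_n(DX,x)$ is trivial. You then invoke the identification $\pi_n\circ D\cong\pi_n$ of Lemma \ref{higher_hom_group_transfer} ``at face value''. These two claims contradict each other as soon as $\pi_n(X,x)\neq 0$, so the step rests on inconsistent premises. Note also that the comparison you set out to make is not what the statement asks for. You need $\pi_n(Df)\colon\pi_n(DX,x)\to\pi_n(DY,f(x))$ to be an isomorphism, not $\pi_n(DX,x)\cong\pi_n(X,x)$. On your own reading both groups are trivial, so there would be nothing to reconcile.

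The clean fix is to compute $D$ directly. It is the right adjoint induced by the theory map $\tr\colon IC_{n+2}^{\leq n+1}\to IC_{n+1}^{\leq n}$, so it is restriction $X\mapsto X\circ\tr$. Since $\tr$ sends the object $n+1$ to $n$ and both $s,t\colon n+1\to n$ to $\mathrm{id}_n$, you get $(DX)_{n+1}=X_n$ with $s=t=\mathrm{id}$. Thus the only $(n+1)$-cells of $DX$ are identities, and $D(X)(a,b)$ is a point only when $a=b$, not for every pair.

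It follows that $\sim_n$ on $DX$ is equality. So $\pi_n(DX,x)$ is literally the group $\pi_n(X,x)$ of $n$-loops at $Z^{n-1}(x)$, with the same composition, and $\pi_{n+1}(DX,x)$ is a single point. This is what makes the $D$-triangle of Lemma \ref{higher_hom_group_transfer} (or Lemma \ref{first_hom_group_transfer} when $n=1$) valid at $k=n$. With it, $\pi_k(Df)$ is identified with $\pi_k(f)$ for $k\le n$, and $\pi_{n+1}(Df)$ is an isomorphism of trivial groups, which closes the argument.
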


\begin{proof}
We combine Lemma \ref{path_components_transfer}, Lemma \ref{first_hom_group_transfer}, and Lemma \ref{higher_hom_group_transfer} to conclude the proof.
\end{proof}

\begin{definition}
Let $n\geq 1$. We say a map $f:X\to Y$ in $\Mod(IC_{n+1})$ is a weak equivalence if $\tr(f)$ is a weak equivalence of $n$-groupoids. 
\end{definition}

\begin{notation}\label{weak_eq_of_models}
Let $n\geq 1$. We write $\zW'$ to be the collection of all weak equivalences in $\Mod(IC_{n+1})$.
\end{notation}

\begin{lemma}
    Let $n\geq 1$. If $f:X\to Y$ is a weak equivalence in $\Mod(IC_{n+2})$, then $Uf$ is a weak equivalence in  $\Mod(IC_{n+2})$. 
\end{lemma}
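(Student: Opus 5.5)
The statement contains an evident typo: $Uf$ is a map in $\Mod(IC_{n+1})$, so the claim to prove is that if $f$ is a weak equivalence in $\Mod(IC_{n+2})$, then $Uf$ is a weak equivalence in $\Mod(IC_{n+1})$. By the definition of weak equivalences in $\Mod(IC_{n+1})$, this means showing that $\tr(Uf)$ is a weak equivalence of $n$-groupoids. We know only that $\tr(f)$ is a weak equivalence of $(n+1)$-groupoids.

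The plan is to reduce everything to the homotopy invariants. First, apply the lemma stating that truncation preserves weak equivalences: since $\tr(f)$ is a weak equivalence of $(n+1)$-groupoids, $\tr(\tr(f))$ is a weak equivalence of $n$-groupoids. Second, identify $\tr(Uf)$ with $\tr(\tr(f))$ using the square of adjunctions induced by the commutative square
\[
\tr\circ\tr_{n+1}\circ(IC_{n+1}\to IC_{n+2})=\tr_n .
\]
Passing to left adjoints, the composite of truncations $\Mod(IC_{n+2})\to\mathpzc{(n+1)Gpd}\to\ngpd$ agrees up to natural isomorphism with $\Mod(IC_{n+2})\xrightarrow{U}\Mod(IC_{n+1})\xrightarrow{\tr}\ngpd$. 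Weak equivalences are detected by $\pi_0$ and the $\pi_k$, so they are invariant under natural isomorphism and this identification suffices.

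The comparison of the two composites is the main obstacle, because $U$ is a right adjoint while the truncations are left adjoints, so the square does not commute formally. I would avoid that issue by working directly with homotopy invariants. Lemmas \ref{path_components_transfer}, \ref{first_hom_group_transfer} and \ref{higher_hom_group_transfer} give
\[
\pi_0\circ U=\pi_0 \quad\text{and}\quad \pi_k\circ U=\pi_k
\]
on pointed objects of $\Mod(IC_{n+2})$, where $\pi_k$ on $\Mod(IC_{n+1})$ is computed through $\tr$ into $n$-groupoids for $1\le k\le n$. On $\Mod(IC_{n+2})$ these invariants are computed through $\tr$ into $(n+1)$-groupoids, and for $k\le n$ they agree with those of the further truncation to $n$-groupoids by the first diagrams of the same lemmas. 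Hence, for every $x\in X_0$:
\begin{itemize}
\item $\pi_0(\tr Uf)$ is identified with $\pi_0(\tr f)$, which is a bijection;
\item $\pi_k(\tr Uf,x)$ is identified with $\pi_k(\tr f,x)$ for $1\le k\le n$, and these are isomorphisms.
\end{itemize}
Naturality of the identifications in $f$ is immediate, since each lemma is an equality of functors.

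One point needs checking: the lemmas treat $0$-cells, and the basepoints $x\in X_0$ are unchanged by $U$. This holds because $U$ is the identity on underlying globular sets.
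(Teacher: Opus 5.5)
Your proposal is correct and is essentially the paper's proof. After fixing the evident typo (the conclusion should be in $\Mod(IC_{n+1})$), the paper simply combines Lemmas \ref{path_components_transfer}, \ref{first_hom_group_transfer} and \ref{higher_hom_group_transfer}, which is exactly your chain $\pi_k(\tr Uf)=\pi_k(Uf)=\pi_k(f)=\pi_k(\tr f)$. You were also right to abandon the ``passing to left adjoints'' identification of $\tr\circ U$ with $\tr\circ\tr$: the commuting square of right adjoints formally gives only $\tr\circ\tr\circ\free\cong\tr$, and says nothing directly about $U$.
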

\begin{proof}
We combine Lemma \ref{path_components_transfer}, Lemma \ref{first_hom_group_transfer}, and Lemma \ref{higher_hom_group_transfer} to conclude the proof.
\end{proof}

\begin{definition}\label{The Pushout Condition}
Let $n\geq 1$ and $X:IC_{n+1}\to\Set$ be a model. We say $X$ satisfies the \emph{pushout condition} if the the following holds: for all $k\geq 0$ and maps $f:D^k\to X$, the map $p:X\to X_+$ in the following pushout is a weak equivalence. 
\[
		\begin{tikzpicture}[node distance=2.5cm]
			\node(A) {$D^k$};
			\node (B) [right of=A]{$X$};
			\node(C)[below of=A]{$D^{k+1}$};
			\node (D)[right of=C]{$X_+$};
			\draw[->](B) to node[black,right=3]{$p$}(D);
			\draw[->](A) to node {}(B);
			\draw[->] (A) to node {}(C);
			\draw[->](C) to node {}(D);
		\end{tikzpicture}
		\]
\end{definition}

This definition says that the pushout conjecture is true for a particular $\infty$-groupoid.

\begin{definition}\label{The Free Pushout Condition}
Let $n\geq 1$ and $X:IC_{n+1}\to\Set$ be a model. We say $X$ satisfies the \emph{free pushout condition} if the the following holds: for all $k\geq 0$ and maps $f:D^k\to X$, the map $\free(p):\free X\to \free X_+$ is a weak equivalence in $\Mod(IC_{n+2})$, where $p:X\to X_+$ is the structure map in the following pushout. 
\[
		\begin{tikzpicture}[node distance=2.5cm]
			\node(A) {$D^k$};
			\node (B) [right of=A]{$X$};
			\node(C)[below of=A]{$D^{k+1}$};
			\node (D)[right of=C]{$X_+$};
			\draw[->](B) to node[black,right=3]{$p$}(D);
			\draw[->](A) to node {}(B);
			\draw[->] (A) to node {}(C);
			\draw[->](C) to node {}(D);
		\end{tikzpicture}
		\]
\end{definition}

This definition is a transfer variant of the pushout conjecture. In particular, we use this definition to argue about being able to hypothetically transfer model structure from the category of $n$-groupoids to the category of $(n+1)$-groupoids.

\section{The Model Structures in Finite Dimensions}
We now provide a description for the hypothetical model structures on $\ngpd$ and $\Mod(IC_{n+1})$ for all finite $n\geq 0$. These model structures are the semi-model structures of \cite{lanari2018semimodelstructuregrothendieckweak} but upgraded to full model structures.
\begin{definition}\label{hyp_model_struct_on_ngpd}
Let $n\geq 1$. The \emph{canonical model structure on $\ngpd$}, if it exists, has
\begin{itemize}
    \item weak equivalences given by the collection $\zW$ from Notation \ref{weak_eq_of_n_gpds},
    \item generating cofibrations  given by the set $\zI$ of Notation \ref{gen cofibrations and triv cofibrations_n_gpd}, and 
    \item and generating trivial cofibrations  given by the set $\zJ$ of Notation \ref{gen cofibrations and triv cofibrations_n_gpd}.
\end{itemize}
\end{definition}

\begin{definition}\label{hyp_model_struct_on_models}
Let $n\geq 1$. The \emph{canonical model structure on $\Mod(IC_{n+1})$}, if it exists, has
\begin{itemize}
    \item weak equivalences given by the collection $\zW'$ from Notation \ref{weak_eq_of_models},
    \item generating cofibrations  given by the set $\zI'$ of Notation \ref{gen cofibrations and triv cofibrations}, and 
    \item and generating trivial cofibrations  given by the set $\zJ'$ of Notation \ref{gen cofibrations and triv cofibrations}.
\end{itemize}
\end{definition}

The following theorem is true and we record it here for incoming results.

\begin{theorem}\label{Thm9.4}
Let $n\geq 1$. If the pushout condition holds for all models in $\Mod(IC_{n+1})$, then the canonical model structure exists.
\end{theorem}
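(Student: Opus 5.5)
We will verify the hypotheses of Kan's recognition theorem for cofibrantly generated model structures (Hovey, Theorem 2.1.19) for the data of Definition \ref{hyp_model_struct_on_models}, taking $\zW'$, $\zI'$, $\zJ'$ in $\Mod(IC_{n+1})$. The category $\Mod(IC_{n+1})$ is locally presentable, since it is the category of models of a small limit sketch. Hence every object is small and the small object argument applies to $\zI'$ and $\zJ'$. The class $\zW'$ satisfies 2-out-of-3 and is closed under retracts, because it is created by the functors $\pi_0,\pi_k$ (through $\tr$), and bijections and isomorphisms have both properties. So the conditions left to check are: (i) $\zJ'$-cell $\subseteq \zW'\cap \zI'$-cof; (ii) $\zI'$-inj $= \zJ'$-inj $\cap\, \zW'$.

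For (i), we first show $\zJ'\subseteq \zI'$-cof. The map $\sigma_k:D^k\to D^{k+1}$ factors as a pushout of $j_{k}$ followed by a pushout of $j_{k+1}$: attach a parallel $k$-cell to $D^k$, then fill the resulting sphere. Both are $\zI'$-cell maps. Next, by the pushout condition, every pushout $X\to X_+$ of a map in $\zJ'$ lies in $\zW'$. Transfinite composites of such maps also lie in $\zW'$. The reason is that $\pi_0$ and the $\pi_k(X,x)$ are computed from finitely many cells of dimension $\le n+1$ and the finitely many operations relating them. Every such cell and relation in a filtered colimit of models already appears at some finite stage, so these invariants commute with filtered colimits along monomorphisms. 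A colimit of isomorphisms is then an isomorphism. Retracts are handled by the closure properties of $\zW'$.

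For (ii), we characterize $\zI'$-inj and $\zJ'$-inj elementwise using the free–forgetful adjunctions along the chain $\Mod(IC_0)\to\cdots\to\Mod(IC_{n+1})$. Since $j_k$ and $\sigma_k$ are images under left adjoints of globular-set maps, lifting against them in $\Mod(IC_{n+1})$ amounts to lifting against them on underlying globular sets. Thus a map is $\zI'$-injective iff it is surjective on $0$-cells and surjective on cells with prescribed parallel boundary in each dimension ("trivial fibration"). It is $\zJ'$-injective iff it has the path-lifting property for cells ("fibration").

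One inclusion follows directly. A trivial fibration is a fibration, and it induces bijections on $\pi_0$ and isomorphisms on each $\pi_k$. Surjectivity comes from boundary-relative lifting at dimension $k$, and injectivity comes from lifting homotopies at dimension $k+1$. For the converse, take a fibration $f$ with $\tr(f)$ a weak equivalence, and a sphere in $X$ whose image bounds a cell $\beta$ in $Y$. Use $\pi_k$-injectivity to obtain a cell $\alpha$ in $X$ together with a homotopy $f\alpha\sim\beta$. Then use the fibration property to transport $\alpha$ along a lift of that homotopy, so that its image becomes $\beta$ on the nose.

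This last step is the main obstacle. It requires composition and inverses of homotopies in a weak $(n+1)$-dimensional structure, which come from the coherator's contractibility lifts (Corollary \ref{our_sp_choice_proof}). It also requires care at the top dimension $n+1$, where $\tr$ forgets cells and weak equivalences only see cells up to dimension $n$. Here we expect to follow Henry's argument for the semi-model structure and to use the pushout condition again, applied to path-object-type constructions, to supply the missing right properness-style input.
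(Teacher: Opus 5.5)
Your route is the same as the paper's, which just says to repeat Henry's Theorem 5.3.5: Kan's recognition theorem, with step (i) coming from the pushout condition and from $\pi_0,\pi_k$ commuting with filtered colimits. The gap is the step you yourself call the main obstacle: the inclusion of the $\zJ'$-injective maps lying in $\zW'$ into the $\zI'$-injective maps. More care will not fix it, because with the data of the canonical model structure on $\Mod(IC_{n+1})$ this inclusion is false. The class $\zW'$ is detected by $\tr$, which only sees the $(n+1)$-skeleton, whereas $\zI'$-injectivity demands fillers for spheres in every dimension.

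Concretely, $IC_{n+1}$ only adjoins lifts of height $\le n$, so every operation lands in dimension $\le n+1$. A model may therefore carry an arbitrary globular set above dimension $n+1$. Let $X$ agree with the terminal model $*$ in dimensions $\le n+1$, have exactly two (necessarily parallel) $(n+2)$-cells $a,b$, and, in each dimension $m\ge n+3$, exactly one cell $c\to c$ for each $(m-1)$-cell $c$.
\begin{itemize}
\item Every cell of $X$ is the source of some cell, so $X\to *$ lifts against every $\sigma_k$.
\item $\tr(X)\to\tr(*)$ is an isomorphism, since maps $X\to DZ$ depend only on the $(n+1)$-skeleton, which $X$ shares with $*$.
\item There is no $(n+3)$-cell from $a$ to $b$, so $X\to *$ does not lift against $j_{n+3}$.
\end{itemize}

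The same failure already occurs at the top dimension you were worried about. Take $n=1$ and the map of strict $2$-groupoids (identities above dimension $2$) induced by the map of crossed modules $(\mathrm{id}\colon\mathbb{Z}/2\to\mathbb{Z}/2)\to(\mathbb{Z}/2\to 1)$. It is $\zJ'$-injective and $\tr$ sends it to an isomorphism. However, the nontrivial endomorphism $2$-cell of the identity in the target has no preimage that is an endomorphism of the identity in the source, so it does not lift against $j_2$. Your ``use $\pi_k$-injectivity'' step has nothing to act on here, because $\zW'$ does not see $\pi_{n+1}$ or anything above it.

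For $n=1$ the pushout condition does hold. Under the equivalence of $1$-groupoids with groupoids, attaching a free isomorphism at an object is an equivalence, and $\tr(\sigma_k)$ is an isomorphism for $k\ge 1$. Your step (i) and the easy inclusion also go through. So both alternatives in the last condition of Kan's theorem must fail, and no cofibrantly generated model structure exists on $\Mod(IC_2)$ with weak equivalences $\zW'$, fibrations the $\zJ'$-injective maps and trivial fibrations the $\zI'$-injective maps. Neither Henry's argument nor further uses of the pushout condition (path objects, right properness) can close the gap.

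The paper's one-line proof has the same blind spot. In Henry's setting, weak equivalences of $\infty$-groupoids detect $\pi_k$ in every dimension, and that is exactly what makes a fibration that is a weak equivalence into a trivial fibration there. Once weak equivalences are defined through $\tr$, that step breaks. A correct statement needs different data on $\Mod(IC_{n+1})$, for instance generating sets adapted to $\tr$, so that the lifting conditions above dimension $n$ match what $\tr$ detects.
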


\begin{proof}
Repeat the proof of Theorem 5.3.5 in \cite{Hen2}.
\end{proof}

The following theorem says that if one of these model structures exists for either $\ngpd$ and $\Mod(IC_{n+1})$ for some $n\geq 1$, then they both do and are Quillen equivalent.

\begin{proposition}\label{trans_model_struct_and_Quill_adj}
Let $n\geq 1$. The canonical model structure exists on $\ngpd$ if and only if it exists on $\Mod(IC_{n+1})$. If either of these equivalent conditions hold, then the following adjunction is a Quillen equivalence.
\[
\begin{tikzpicture}[>=stealth, thick, node distance=4cm]

\node (A) {$\Mod(IC_{n+1})$};
\node (B) [right of=A] {$\ngpd$};

\draw[transform canvas={yshift=0.3ex},->] (A) to node[above] {$\tr$} (B);
\draw[transform canvas={yshift=-0.3ex},<-] (A) to node[below] {$D$} (B);
\end{tikzpicture}
\]
\end{proposition}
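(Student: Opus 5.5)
The plan is to compare both canonical model structures through the adjunction $\tr\dashv D$. The key structural facts are that $\tr\circ D=1$ (the lemma following the construction of the square of adjunctions) and that weak equivalences in $\Mod(IC_{n+1})$ are \emph{defined} as those $f$ with $\tr(f)\in\zW$. The generators match as well: by construction $\zI=\tr(\zI')$ and $\zJ=\tr(\zJ')$ in $\ngpd$.

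For the direction ``$\Mod(IC_{n+1})$ has the structure $\Rightarrow$ $\ngpd$ has it'', I would left-transfer along $\tr$, or more precisely right-transfer along $D$ with generators $\tr(\zI'),\tr(\zJ')$. I will use Kan's transfer theorem for the adjunction $\tr\dashv D\colon \ngpd\to\Mod(IC_{n+1})$ to put a structure on $\ngpd$. The steps are as follows.
\begin{enumerate}[(i)]
\item Since both categories are locally presentable (categories of models of limit sketches), the small object argument applies.
\item A map $g$ of $n$-groupoids is a weak equivalence iff $D(g)\in\zW'$, because $\tr D g=g$. So the transferred weak equivalences are exactly $\zW$, and the transferred generating (trivial) cofibrations are $\tr(\zI')=\zI$ and $\tr(\zJ')=\zJ$, as required by Definition~\ref{hyp_model_struct_on_ngpd}.
\item The acyclicity condition must be checked: relative $\zJ$-cell complexes in $\ngpd$ must be sent by $D$ into $\zW'$. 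Since $\tr D=1$, this reduces to showing they lie in $\zW$. I would argue that $\tr$ preserves pushouts and transfinite composites, so a $\zJ$-cell complex in $\ngpd$ is $\tr$ of a $\zJ'$-cell complex $c$ in $\Mod(IC_{n+1})$. To build $c$, lift attaching maps along the unit $X\to D\tr X$, i.e.\ attach cells to $D$ of the object and use that $\tr D X\cong X$. Then $c$ is a trivial cofibration, so $c\in\zW'$, and hence $\tr c\in\zW$.
\end{enumerate}

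For the converse, I would transfer from $\ngpd$ to $\Mod(IC_{n+1})$ along $\tr\dashv D$ in the other orientation, i.e.\ define the structure on $\Mod(IC_{n+1})$ with $\zW'=\tr^{-1}(\zW)$ and generators $\zI',\zJ'$. Here the acyclicity check is that $\zJ'$-cell complexes go to $\zW$ under $\tr$. This holds because $\tr$ is a left adjoint, so it carries $\zJ'$-cell complexes to $\zJ$-cell complexes, which are trivial cofibrations in $\ngpd$. The remaining issue is that this is a left-induced structure: cofibrations are $\zI'$-generated, while weak equivalences are created by $\tr$. So I would have to verify that maps with the right lifting property against $\zI'$ are exactly $\zW'$-fibrations. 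To do that I would show that $\zI'$-injectives are sent by $\tr$ to $\zI$-injectives, which are trivial fibrations and hence in $\zW$. This uses that $j_k$ for $k\le n$ lies in the image of $D$ on cells below dimension $n$ and matches $\zI$ there, while for $k\ge n+1$ lifting against $j_{n+1}$ in $\Mod(IC_{n+1})$ forces the identification of parallel $n$-cells up to the coherence that $\tr$ quotients.

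Finally, for the Quillen equivalence, $\tr$ preserves generating (trivial) cofibrations by construction, so $\tr\dashv D$ is a Quillen adjunction. The unit and counit are handled as follows. The counit $\tr D X\to X$ is the identity. For the derived unit $X\to D\tr X$ on cofibrant $X$, applying $\tr$ gives the identity, so it lies in $\zW'$ because $\zW'$ is created by $\tr$. The step I expect to be the main obstacle is the comparison of $\zI'$-injectives with $\tr$-trivial fibrations in the converse direction, because the top-dimensional generator $j_{n+1}$ behaves differently in the two categories (in $\ngpd$ it is a collapse map). If a direct check fails, I would fall back to deriving it from Theorem~\ref{Thm9.4}: I would verify the pushout condition in $\Mod(IC_{n+1})$ by applying $\tr$, which preserves the defining pushouts and reflects weak equivalences, and then use the pushout condition in $\ngpd$, which I expect to be available once the canonical structure there exists.
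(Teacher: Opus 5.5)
\paragraph{The gap.} In the direction $\ngpd\Rightarrow\Mod(IC_{n+1})$, your recognition-theorem route checks acyclicity and the inclusion $\zI'\text{-inj}\subseteq\zW'\cap\zJ'\text{-inj}$. It never checks the remaining hypothesis, $\zW'\cap\zJ'\text{-inj}\subseteq\zI'\text{-inj}$ (or dually $\zW'\cap\zI'\text{-cof}\subseteq\zJ'\text{-cof}$). This cannot be filled in, because it is false.

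The theory $IC_{n+1}$ is generated over $\Theta_0^\op$ by lifts at heights $0,\dots,n$. Their inputs have height at most $n+1$ and their outputs have dimension at most $n+1$. So a model over $IC_{n+1}$ has no operations in dimensions $\ge n+2$, and its cells there can be chosen freely. Use this to build a counterexample:
\begin{enumerate}
\item Take a model $Y$ with two distinct parallel $(n+1)$-cells $c_1\neq c_2$, for instance $Y=\free(S^{n+1})$.
\item Let $F$ agree with $Y$ in dimensions $\le n+1$. In each dimension $m\ge n+2$, give $F$ exactly one cell $e_x$ for each $(m-1)$-cell $x$, with $s(e_x)=t(e_x)=x$.
\item Then $F\to *$ is $\zJ'$-injective, hence a fibration: every $k$-cell with $k\le n$ has an identity $(k+1)$-cell, and every higher cell has its loop $e_x$.
\item On the other hand, $j_{n+2}\colon S^{n+1}\to D^{n+2}$ is a generating cofibration. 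As the paper notes, $\tr(j_{n+2})=1_{D^n}$, so $j_{n+2}\in\zW'$.
\item Yet $j_{n+2}$ has no lift against $F\to *$ along the map $S^{n+1}\to F$ picking $(c_1,c_2)$, since every $(n+2)$-cell of $F$ has equal source and target.
\end{enumerate}
So for every $n\ge 1$ there is \emph{no} model structure on $\Mod(IC_{n+1})$ with weak equivalences $\zW'$, cofibrations generated by $\zI'$, and fibrations the $\zJ'$-injectives.

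\paragraph{Consequences for your fallback and the statement.} Your fallback is exactly the paper's proof of this direction: push $\sigma_k$ out in $\Mod(IC_{n+1})$, apply $\tr$, obtain the pushout condition, then invoke Theorem~\ref{Thm9.4}. The example shows it fails at the last step. For $n=1$ the canonical structure on $\mathpzc{1Gpd}$ exists (Lemma~\ref{mod_struct_on_1_gpds}), so your argument does give the pushout condition in $\Mod(IC_2)$. Yet the canonical structure on $\Mod(IC_2)$ does not exist. Hence Theorem~\ref{Thm9.4} does not carry over to $\Mod(IC_{n+1})$, and the implication $\ngpd\Rightarrow\Mod(IC_{n+1})$ is false as stated. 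The culprits are the generators $j_k$ with $k\ge n+2$: $\tr$ turns them into identities, while $IC_{n+1}$ supplies no coherence in those dimensions.

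Your other direction is a valid argument: right transfer along $D$, lifting attaching maps along the unit $Y\to D\tr Y$, which is surjective on cells. It is cleaner than the paper's, which silently relies on an $\ngpd$-version of Theorem~\ref{Thm9.4}. Your Quillen-equivalence argument matches the paper's. However, both have a hypothesis that is never satisfied.
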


\begin{proof}
Suppose that the canonical model structure exists on $\ngpd$. Let $X:IC_{n+1}\to\Set$ be a model, $k\geq 0$, and $f:D^k\to X$ be a map of models. Let \[
		\begin{tikzpicture}[node distance=2.5cm]
			\node(A) {$D^k$};
			\node (B) [right of=A]{$X$};
			\node(C)[below of=A]{$D^{k+1}$};
			\node (D)[right of=C]{$X_+$};
			\draw[->](B) to node[black,right=3]{$p$}(D);
			\draw[->](A) to node[above=3] {$f$}(B);
			\draw[->] (A) to node {}(C);
			\draw[->](C) to node {}(D);
		\end{tikzpicture}
		\]
    be a pushout in $\Mod(IC_{n+1})$. Then notice that this induces a pushout in $\ngpd$ as $\tr$ is a left adjoint.
    \[
    \begin{tikzpicture}[node distance=2.5cm]
			\node(A) {$D^k$};
			\node (B) [right of=A]{$\tr X$};
			\node(C)[below of=A]{$D^{k+1}$};
			\node (D)[right of=C]{$\tr X_+$};
			\draw[->](B) to node[black,right=3]{$\tr p$}(D);
			\draw[->](A) to node[above=3] {$\tr f$}(B);
			\draw[->] (A) to node {}(C);
			\draw[->](C) to node {}(D);
		\end{tikzpicture}
		\]
    By hypothesis, $\tr p$ is a weak equivalence which means $p$ is a weak equivalence by definition. Theorem \ref{Thm9.4} now says that the canonical model structure exists on $\Mod(IC_{n+1})$. 

    Suppose the canonical model structure exists on $\Mod(IC_{n+1})$. Let $X$ be an $n$-groupoid, $k\geq 0$, and $f:D^k\to X$ be a map of $n$-groupoids. Let $f':D^k\to DX$ be the following composite in $\Mod(IC_{n+1})$, where $\eta_{D^k}$ is the component of the unit of the adjunction at $D^k$.
    \[
    D^k\xrightarrow{\eta_{D^k}}DD^k\xrightarrow{Df}DX
    \]
    
    Let \[
		\begin{tikzpicture}[node distance=2.5cm]
			\node(A) {$D^k$};
			\node (B) [right of=A]{$DX$};
			\node(C)[below of=A]{$D^{k+1}$};
			\node (D)[right of=C]{$DX_+$};
			\draw[->](B) to node[black,right=3]{$p$}(D);
			\draw[->](A) to node[above=3] {$f'$}(B);
			\draw[->] (A) to node[left] {$\sigma_k$}(C);
			\draw[->](C) to node {}(D);
		\end{tikzpicture}
		\]
    be a pushout in $\Mod(IC_{n+1})$. By hypothesis, $p$ is a weak equivalence in $\Mod(IC_{n+1})$. Since $\tr$ is a left adjoint, $\tr(p)$ is a weak equivalence and the following square is a pushout in $\ngpd$.
    \[
		\begin{tikzpicture}[node distance=2.5cm]
			\node(A) {$D^k$};
			\node (B) [right of=A]{$X$};
			\node(C)[below of=A]{$D^{k+1}$};
			\node (D)[right of=C]{$\tr(DX_+)$};
			\draw[->](B) to node[black,right=3]{$\tr(p)$}(D);
			\draw[->](A) to node[above=3] {$f$}(B);
			\draw[->] (A) to node[left] {$\sigma_k$}(C);
			\draw[->](C) to node {}(D);
		\end{tikzpicture}
		\]
 Therefore the canonical model structure exists on $\ngpd$.

 Now suppose both of these equivalent conditions hold. By construction, $\tr$ sends the generating cofibrations and generating trivial cofibrations in $\Mod(IC_{n+1})$ to generating cofibrations and generating trivial cofibrations in $\ngpd$. Therefore the adjunction is a Quillen adjunction.  Notice that a map of $n$-groupoids $f:X\to Y$ is a weak equivalence if and only if $D(f)$ is a weak equivalence in $\Mod(IC_{n+1})$ since $\tr(D(f))=f$. Therefore $D$ creates weak equivalences. Let $X:IC_{n+1}\to\Set$ be a model. Then the map $\eta_X:X\to D\tr(X)$ is a weak equivalence. We conclude that the Quillen adjunction is a Quillen equivalence. 
\end{proof}

We now want to begin the process of providing an inductive strategy to prove the Homotopy Hypothesis. The fact that $0$-groupoids have a model structure corresponds to the fact that $0$-groupoids are merely sets. Our base case to begin is the $1$-groupoid case. 

\begin{lemma}\label{mod_struct_on_1_gpds}
There is an equivalence of categories where $\Gpd$ is the category of groupoids and functors between them.
\[
\mathpzc{1Gpd}\simeq \Gpd
\]
Moreover, the canonical model structure exists and is the model structure inherited from this equivalence of categories.
\end{lemma}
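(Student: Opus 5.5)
The plan has two parts: first identify $\mathpzc{1Gpd}=\Mod(IC_2^{\leq 1})$ with $\Gpd$ by unwinding the presentation of the limit sketch, and then check that the folk (canonical) model structure on $\Gpd$ has the weak equivalences and generators of Definition \ref{hyp_model_struct_on_ngpd}.

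\textbf{The equivalence.} Recall how $IC_2^{\leq 1}$ is built. We start from $IC_{1,0}$, the tables of dimensions of height at most $1$. We pass to $IC_{1,1}$ by freely adding lifts for all admissible pairs at height $0$. This gives operations $\vec p\to 1$ for every table $\vec p$ of height at most $1$ and every pair of maps $\vec p\rightrightarrows 0$. Among them are identities (from $\vec p=(0)$, with $f=g$), composition (from $\vec p=(1,0,1)$, with $f$ the source of the first cell and $g$ the target of the second), and inverses (from $\vec p=(1)$, with $f=t$ and $g=s$). Finally $IC_2^{\leq 1}$ identifies every admissible pair at height $1$. So any two parallel $1$-cell operations $\vec p\rightrightarrows 1$ become equal.

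A model $X$ is therefore a graph $X_1\rightrightarrows X_0$ with a chosen operation for each such lift, subject to the rule that parallel derived operations agree. Associativity, the unit laws and the inverse laws are each an equality of two parallel operations out of some $\vec p$, so they all hold. Hence $X$ is a groupoid. Conversely, given a groupoid $G$, interpret each lift $\delta_{f,g}\colon \vec p\to 1$ as the unique morphism $f\to g$ obtained by composing along the zig-zag in $\vec p$. This interpretation needs care: the operation must be well defined for every $\vec p$ of height at most $1$. For $\vec p$ of height $1$ we can take the composite of the pasted cells, precomposed and postcomposed with inverses as required by $f,g$. Since $\vec p$ is a string of $1$-cells joined along $0$-cells, and in a groupoid every two objects in the same connected path have a canonical connecting morphism once the cells are given, we expect this to work.

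It then has to be checked that these assignments are inverse on objects and functorial. The key observation on morphisms is that a map of models is determined on $X_0$ and $X_1$. Every operation is either an identity or a lift, so preserving the globular data forces preservation of all operations, provided the groupoid interpretation is canonical. I expect this step to be the main obstacle. The worry is that a lift $\delta_{f,g}$ with $f,g\colon\vec p\to 0$ produces an element of $X_1$ from elements of $X_{\vec p}$, and we need one canonical such element in a groupoid. The $2$-dimensional identification is what makes any choice canonical, since any two candidates are parallel and are forced equal. One therefore only needs existence, which follows by induction on the length of $\vec p$ using composition and inverses.

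\textbf{The model structure.} Transport the folk model structure on $\Gpd$ across the equivalence. Its weak equivalences are the equivalences of groupoids, i.e.\ maps bijective on $\pi_0$ and on all $\pi_1$. These match $\zW$, since the homotopy groups of this section reduce in dimension $1$ to $\pi_0$ and the vertex groups. Its generating cofibrations are $\emptyset\to *$, $\{*,*\}\to(\bullet\to\bullet)$, and the map collapsing the parallel pair to a single arrow. Under the equivalence these are exactly $\tr(j_0)$, $\tr(j_1)$ and $\tr(j_2)$ from the remark after Notation \ref{gen cofibrations and triv cofibrations_n_gpd}; the maps $j_k$ with $k>2$ are identities. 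The generating trivial cofibration $*\to(\bullet\overset{\sim}{\to}\bullet)$ is $\tr(\sigma_0)$, and the maps $\sigma_k$ with $k\ge1$ are identities. Adding identities to a generating set does not change the cofibrantly generated structure, so this is the canonical model structure.
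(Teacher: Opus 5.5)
Your proposal is correct and follows the same overall plan as the paper: identify $\mathpzc{1Gpd}$ with $\Gpd$, then transport the folk model structure. It produces the equivalence differently, though. The paper builds a globular-sum-preserving co-model $D\colon (IC_2^{\leq 1})^{\op}\to\Gpd$, given by disks in $\Gpd$, so $D(\vec p)$ is the free groupoid on the string $\vec p$. It takes the induced nerve--realization adjunction $\Pi\dashv|-|$, asserts that it is an adjoint equivalence, and calls the comparison of model structures routine. You instead unwind the generators and relations of $IC_2^{\leq 1}$ by hand. You also actually carry out the matching of weak equivalences and generators, including the observation that $j_k$ for $k>2$ and $\sigma_k$ for $k\geq 1$ are identities, which the paper omits. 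What the paper's packaging buys is that the groupoid-to-model direction comes for free, since $\Gpd(D(-),G)$ automatically preserves globular products. In your write-up that is exactly the step you leave at ``we expect this to work''.

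Both routes rest on one fact, which you should state outright. A linear graph $v_0\to\cdots\to v_k$ is a tree, so its free groupoid is codiscrete. Hence every derived operation $\vec p\to 1$ of $IC_{1,1}$ with endpoints $(v_a,v_b)$ is interpreted in a groupoid $G$ as the image of the unique morphism $v_a\to v_b$ under the functor classifying the input. So parallel operations agree in $G$, and the height-$1$ identifications hold. In the paper, this same fact is what makes $D$ well defined on the quotient $IC_2^{\leq 1}$.

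Your remark that the $2$-dimensional identification makes any choice canonical, so only existence is needed, is right for the model-to-groupoid direction. There it shows every lift equals a composite of compositions, identities and inverses, which gives full faithfulness. It cannot be used for the converse, where the identification is precisely what must be verified. Mere existence is not enough there: an arbitrary arrow $s(x)\to t(x)$ chosen for $\delta_{s,t}\colon 1\to 1$ would violate $\delta_{s,t}=\mathrm{id}_1$.
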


\begin{proof}
Use the model structure on $\Gpd$ to build a map $D:(IC_2^{\leq 1})^\op\to\Gpd$ that preserves globular sums. This induces the following adjoint equivalence.
\[
\begin{tikzpicture}[node distance=3cm]
	\node (A) {$\Gpd$};
	\node (B)[right of=A]{$\mathpzc{1Gpd}$};
	\draw[transform canvas={yshift=0.3ex},->] (A) to node[above=3] {$\Pi$} (B);
	\draw[transform canvas={yshift=-0.3ex},->,swap](B) to node[below=3] {$|-|$} (A);
\end{tikzpicture}
\]
It is now routine to show that the two model structures coincide. 
\end{proof}

We now move on from our base case to provide a necessary and sufficient case for the inductive case to be true.

\begin{proposition}\label{induc_transfer}
Let $n\geq 1$. Assuming the canonical model structure exists on $\Mod(IC_{n+1})$, then the free pushout condition is true for every model over $IC_{n+1}$ if and only if the canonical model structure exists on $\Mod(IC_{n+2})$.
\end{proposition}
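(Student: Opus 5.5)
The plan is to reduce both directions to the pushout condition on $\Mod(IC_{n+2})$ and then invoke Theorem \ref{Thm9.4}, mirroring the proof of Proposition \ref{trans_model_struct_and_Quill_adj}. The key structural input is the adjunction $\free\dashv U$ between $\Mod(IC_{n+1})$ and $\Mod(IC_{n+2})$. The left adjoint $\free$ preserves pushouts and, by construction of $\zI'$ and $\zJ'$, sends $D^k\to D^{k+1}$ to $D^k\to D^{k+1}$. I will also use that $U$ creates weak equivalences, in the sense that $f$ is a weak equivalence iff $Uf$ is. This should follow from Lemmas \ref{path_components_transfer}, \ref{first_hom_group_transfer} and \ref{higher_hom_group_transfer}, since both sides compute $\pi_0$ and $\pi_k$ through $\tr$ on cells of dimension at most $n+1$.

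For the forward direction, assume the free pushout condition holds for every model over $IC_{n+1}$. I must show that every model $Y$ over $IC_{n+2}$ satisfies the pushout condition; Theorem \ref{Thm9.4} then yields the canonical model structure on $\Mod(IC_{n+2})$. Take $f:D^k\to Y$. By adjunction it corresponds to $f^\flat:D^k\to UY$ in $\Mod(IC_{n+1})$. Form the pushout $p:UY\to (UY)_+$ along $\sigma_k$, so that $\free(p):\free UY\to\free(UY)_+$ is a weak equivalence by hypothesis. The counit $\varepsilon_Y:\free UY\to Y$ then exhibits the pushout $Y\to Y_+$ along $f$ as the pushout of $\free(p)$ along $\varepsilon_Y$. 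This uses pasting of pushouts and the fact that $\free$ preserves the square. So it remains to show that weak equivalences of the form $\free(p)$ are stable under this cobase change.

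This cobase-change step is the main obstacle. My first attempt is to compute homotopy groups directly. Both $\free(p)$ and its pushout along $\varepsilon_Y$ only attach a free $(k+1)$-cell together with its freely generated coherence data. Below dimension $n+1$ the effect on $\overline{X_j}$ should depend only on the attached generator, so the homotopy-group comparison for $\free(p)$ should transfer verbatim. If this fails, the fallback is to use the existing model structure on $\Mod(IC_{n+1})$ in two steps. First, apply $U$ and show that $U$ of the pushout is a transfinite composite of pushouts of generating trivial cofibrations in $\Mod(IC_{n+1})$, which are weak equivalences there since that canonical structure exists. Second, conclude using the creation of weak equivalences by $U$.

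For the converse, assume the canonical model structure exists on $\Mod(IC_{n+2})$, and let $X$ be a model over $IC_{n+1}$ with $f:D^k\to X$ and pushout $p:X\to X_+$. Since $\free$ preserves pushouts and sends $\sigma_k$ to $\sigma_k$, the square obtained by applying $\free$ is again a pushout along the generating trivial cofibration $\sigma_k$ in $\Mod(IC_{n+2})$. Hence $\free(p)$ is a trivial cofibration, in particular a weak equivalence, and this is exactly the free pushout condition for $X$. This direction is formal; essentially all the difficulty sits in the cobase-change step above.
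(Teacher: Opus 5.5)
Your converse direction matches the paper. So does the set-up of the forward direction: transposing $f$ to $f^\flat\colon D^k\to UY$, forming $p\colon UY\to (UY)_+$, and recognizing $Y\to Y_+$ as the cobase change of $\free(p)$ along the counit $\varepsilon_Y$. The gap is the cobase-change step you flag, and neither of your two suggestions closes it.

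The ``direct computation'' is only a heuristic. It also looks in the wrong place: it talks about dimensions below $n+1$, but the only new information is in $\pi_{n+1}$.

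The fallback rests on a false claim, namely that $U$ reflects weak equivalences. A map in $\Mod(IC_{n+1})$ is a weak equivalence when its image under $\tr$ in $n$-groupoids is one. So $Uf$ being a weak equivalence only says that $f$ is an isomorphism on $\pi_0,\dots,\pi_n$. A weak equivalence in $\Mod(IC_{n+2})$ must also be an isomorphism on $\pi_{n+1}$. The paper remarks right after this proof that this is exactly the content of the free pushout condition. Any $f$ that is an isomorphism on $\pi_{\le n}$ but not on $\pi_{n+1}$ shows $U$ does not reflect weak equivalences. Suppose you did manage to write $U(Y\to Y_+)$ as a transfinite composite of pushouts of generating trivial cofibrations of $\Mod(IC_{n+1})$. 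That is itself unclear, since $U$ is a right adjoint and need not preserve pushouts. Even then you would learn nothing about $\pi_{n+1}$.

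The missing idea is to work with the left adjoint $\tr$ from $\Mod(IC_{n+2})$ to $(n+1)$-groupoids instead of $U$. It preserves pushouts and, by definition, detects weak equivalences in $\Mod(IC_{n+2})$. The paper observes that $\tr(\varepsilon_Y)=1_{\tr(Y)}$. Roughly, $\free UY$ agrees with $Y$ in dimensions $\le n+1$. Its extra $(n+2)$-cells only connect $(n+1)$-cells that are already connected in $Y$, so they disappear under truncation. Applying $\tr$ to your cobase-change square then exhibits $\tr(Y\to Y_+)$ as a pushout of $\tr(\free(p))$ along an isomorphism. Hence it is isomorphic to $\tr(\free(p))$, which is a weak equivalence of $(n+1)$-groupoids by the free pushout condition. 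So $Y\to Y_+$ is a weak equivalence and Theorem~\ref{Thm9.4} applies; no homotopy-group computation is needed.
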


\begin{proof}
Suppose the free pushout condition is true for every model over $IC_{n+1}$. Let $X:IC_{n+2}\to\Set$ be a model and $f:D^k\to UX$ be a map of $\Mod(IC_{n+1})$. We shall now consider the following pushout in $\Mod(IC_{n+1})$.
\[
		\begin{tikzpicture}[node distance=2.5cm]
			\node(A) {$D^k$};
			\node (B) [right of=A]{$UX$};
			\node(C)[below of=A]{$D^{k+1}$};
			\node (D)[right of=C]{$(UX)_+$};
			\draw[->](B) to node[black,right=3]{$p$}(D);
			\draw[->](A) to node[above=3] {$f$}(B);
			\draw[->] (A) to node[left=3] {$\sigma_k$}(C);
			\draw[->](C) to node {}(D);
		\end{tikzpicture}
		\]
By hypothesis and Theorem \ref{trans_model_struct_and_Quill_adj}, the map $p$ is a weak equivalence in $\Mod(IC_{n+1})$. The free pushout condition says that $\free(p)$ is a weak equivalence in $\Mod(IC_{n+2})$. By definition, $\tr(\free(p))$ is a weak equivalence in $\mathpzc{(n+1)Gpd}$.

Let $\epsilon_X:\free UX\to X$ be the counit component at the model $X$ for the adjunction between the categories of models and form the following pushout.
\[
		\begin{tikzpicture}[node distance=2.5cm]
			\node(A) {$\free UX$};
			\node (B) [right of=A]{$X$};
			\node(C)[below of=A]{$\free((UX)_+)$};
			\node (D)[right of=C]{$X_+$};
			\draw[->](B) to node[black,right=3]{$p$}(D);
			\draw[->](A) to node[above=3] {$\epsilon_X$}(B);
			\draw[->] (A) to node[left=3] {$\free(p)$}(C);
			\draw[->](C) to node[below=3] {$\phi$}(D);
		\end{tikzpicture}
		\]
Notice that $\tr(\epsilon_X)=1_{\tr(X)}$. Since $\tr$ is a left adjoint, the following square is a pushout in $\mathpzc{(n+1)Gpd}$.
\[
		\begin{tikzpicture}[node distance=2.5cm]
			\node(A) {$\tr(\free UX)$};
			\node (B) [right of=A]{$\tr(X)$};
			\node(C)[below of=A]{$\tr(\free((UX)_+))$};
			\node (D)[right of=C]{$\tr(X_+)$};
			\draw[->](B) to node[black,right=3]{$\tr(p)$}(D);
			\draw[->](A) to node[above=3] {$\tr(\epsilon_X)$}(B);
			\draw[->] (A) to node[left=3] {$\tr(\free(p))$}(C);
			\draw[->](C) to node[below=3] {$\tr(\phi)$}(D);
		\end{tikzpicture}
		\]

The map $\tr(\phi)$ is forced to be the identity, so that $\tr(p)$ is a weak equivalence. Therefore $p$ is a weak equivalence in $\Mod(IC_{n+2})$. We conclude that the canonical model structure exists on $\Mod(IC_{n+2})$. 

Suppose that the canonical model structure exists on $\Mod(IC_{n+2})$. Let $X:IC_{n+1}\to\Set$ be a model and $f:D^k\to X$ be a map. We shall now consider the following pushout in $\Mod(IC_{n+1})$.
\[
		\begin{tikzpicture}[node distance=2.5cm]
			\node(A) {$D^k$};
			\node (B) [right of=A]{$X$};
			\node(C)[below of=A]{$D^{k+1}$};
			\node (D)[right of=C]{$X_+$};
			\draw[->](B) to node[black,right=3]{$p$}(D);
			\draw[->](A) to node[above=3] {$f$}(B);
			\draw[->] (A) to node[left=3] {$\sigma_k$}(C);
			\draw[->](C) to node {}(D);
		\end{tikzpicture}
		\]
Since $\free$ is a left adjoint, we obtain the following induced pushout in $\Mod(IC_{n+2})$.
\[
		\begin{tikzpicture}[node distance=2.5cm]
			\node(A) {$D^k$};
			\node (B) [right of=A]{$\free X$};
			\node(C)[below of=A]{$D^{k+1}$};
			\node (D)[right of=C]{$\free(X_+)$};
			\draw[->](B) to node[black,right=3]{$\free(p)$}(D);
			\draw[->](A) to node[above=3] {$\free(f)$}(B);
			\draw[->] (A) to node[left=3] {$\sigma_k$}(C);
			\draw[->](C) to node {}(D);
		\end{tikzpicture}
		\]
The existence of the canonical model structure says that $\free(p)$ is a weak equivalence, so that the free pushout condition is satisfied for all models in $\Mod(IC_{n+1})$. 
\end{proof}

Let $n,k\geq 1$, $X:IC_{n+1}\to\Set$ be a model, and $f:D^k\to X$ be a map. Now consider the following pushout in $\Mod(IC_{n+1})$.
\[
		\begin{tikzpicture}[node distance=2.5cm]
			\node(A) {$D^k$};
			\node (B) [right of=A]{$X$};
			\node(C)[below of=A]{$D^{k+1}$};
			\node (D)[right of=C]{$X_+$};
			\draw[->](B) to node[black,right=3]{$p$}(D);
			\draw[->](A) to node[above=3] {$f$}(B);
			\draw[->] (A) to node[left=3] {$\sigma_k$}(C);
			\draw[->](C) to node {}(D);
		\end{tikzpicture}
		\]
Applying Lemma \ref{path_components_transfer}, Lemma \ref{first_hom_group_transfer}, and Lemma \ref{higher_hom_group_transfer}, we conclude that showing that $\free(p)$ is a weak equivalence boils down to showing that $\free(p)$ induces an isomorphism on the $(n+1)$ homotopy groups. In order to prove the canonical model structure exists using induction, it is sufficient to show that this is true for $n,k\geq 1$, every model $X:IC_{n+1}\to\Set$, and every map $f:D^k\to X$.

We now show that the existence of a model structure on $\mathpzc{(n+1)Gpd}$ for some $n\geq 1$ turns the appropriate adjunctions into Quillen adjunctions.

\begin{theorem}
    Let $n\geq 1$. If the canonical model structure exists on $\mathpzc{(n+1)Gpd}$, then all four of the adjunctions in the following square are Quillen adjunctions.
\[
\begin{tikzpicture}[>=stealth, thick, node distance=4cm]

\node (A) {$\Mod(IC_{n+2})$};
\node (B) [right of=A] {$\Mod(IC_{n+1})$};
\node (C) [below of=A] {$\mathpzc{(n+1)Gpd}$};
\node (D) [right of=C] {$\ngpd$};

\draw[transform canvas={yshift=0.3ex},->] (A) to node[above] {$U$} (B);
\draw[transform canvas={yshift=-0.3ex},<-] (A) to node[below] {$\free$} (B);

\draw[transform canvas={yshift=-0.3ex},->] (D) to node[below] {$D$} (C);
\draw[transform canvas={yshift=0.3ex},<-] (D) to node[above] {$\tr$} (C);

\draw[transform canvas={xshift=0.3ex},->] (A) to node[left] {$D$} (C);
\draw[transform canvas={xshift=-0.3ex},<-] (A) to node[right] {$\tr$} (C);

\draw[transform canvas={xshift=-0.3ex},->] (B) to node[left] {$\tr$} (D);
\draw[transform canvas={xshift=0.3ex},<-] (B) to node[right] {$D$} (D);

\end{tikzpicture}
\]

Moreover, the two vertical adjunctions are Quillen equivalences.
\end{theorem}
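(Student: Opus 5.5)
We first put model structures on all four corners and then check each adjunction separately. By hypothesis the canonical model structure exists on $\mathpzc{(n+1)Gpd}$. Proposition \ref{trans_model_struct_and_Quill_adj} (with $n+1$ in place of $n$) then shows that it exists on $\Mod(IC_{n+2})$, and that the left vertical adjunction $\tr\dashv D$ is a Quillen equivalence. Proposition \ref{induc_transfer} read backwards shows that the free pushout condition holds for every model over $IC_{n+1}$. For the right column we also need the model structure on $\ngpd$, equivalently (again by Proposition \ref{trans_model_struct_and_Quill_adj}) on $\Mod(IC_{n+1})$. We get it by verifying the pushout condition on $\Mod(IC_{n+1})$ and invoking Theorem \ref{Thm9.4}. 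Given $f:D^k\to X$ with pushout $p:X\to X_+$, the hypothesis on $\Mod(IC_{n+2})$ shows that $\free(p)$, a pushout of $\sigma_k$, is a weak equivalence. Since $\tr_n$ factors through $\tr_{n+1}$ followed by the bottom $\tr$, we have $\tr\circ\tr\circ\free\cong\tr$ as functors to $\ngpd$; this is the commuting square of left adjoints. Because $\tr$ of $(n+1)$-groupoids preserves weak equivalences, $\tr(p)$ is a weak equivalence, so $p$ is one by definition. Proposition \ref{trans_model_struct_and_Quill_adj} then gives that the right vertical adjunction is a Quillen equivalence.

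Next come the horizontal adjunctions. Since all four structures are cofibrantly generated, it suffices to show that the left adjoints send generating cofibrations to cofibrations and generating trivial cofibrations to trivial cofibrations. For $\free:\Mod(IC_{n+1})\to\Mod(IC_{n+2})$ this is immediate, because $\zI'$ and $\zJ'$ in $\Mod(IC_{n+2})$ were defined as $\free$ applied to $\zI'$ and $\zJ'$ in $\Mod(IC_{n+1})$. For $\tr:\mathpzc{(n+1)Gpd}\to\ngpd$ we use the commuting square of left adjoints. The generators $j_k$ and $\sigma_k$ in $\mathpzc{(n+1)Gpd}$ are $\tr(j_k)$ and $\tr(\sigma_k)$ of generators in $\Mod(IC_{n+2})$. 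So the bottom $\tr$ sends them to $\tr\tr(j_k)=\tr\free(j_k)\cong\tr(j_k)$, and similarly for $\sigma_k$. These are generators of $\ngpd$ (identities or collapse maps in the degenerate range $k\geq n$, which are still cofibrations, and trivial cofibrations when coming from $\sigma_k$). This makes $\tr\dashv D$ on the bottom a Quillen adjunction.

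The main obstacle I expect is this degenerate range in the bottom adjunction. The map $j_{n+1}$ in $\mathpzc{(n+1)Gpd}$ becomes the collapse $S^n\to D^n$ in $\ngpd$, which is exactly the generating cofibration $j_{n+1}$ there, so that case should be fine. For $\sigma_{n}$, however, we must check that its image $1_{D^n}$ is a trivial cofibration, which is trivially true. A second possible obstacle is identifying $\tr\circ\free$ with $\tr$ up to natural isomorphism. I would prove this by uniqueness of left adjoints, using that the square of right adjoints commutes, i.e.\ $U\circ D\cong D\circ D$. That equation in turn follows from the commuting square of limit sketches.
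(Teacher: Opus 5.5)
Your proof is correct. It follows the paper's overall plan: Proposition~\ref{trans_model_struct_and_Quill_adj} for the vertical Quillen equivalences, and generators for the horizontal Quillen adjunctions. It differs, usefully, at the one step where the paper is thin.

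\textbf{The right-hand column.} The paper obtains the canonical model structures on $\Mod(IC_{n+1})$ and the $n$-groupoids by citing Proposition~\ref{trans_model_struct_and_Quill_adj} together with Proposition~\ref{induc_transfer}. But Proposition~\ref{induc_transfer} takes the existence of the structure on $\Mod(IC_{n+1})$ as a hypothesis, so it cannot produce it. You derive it directly instead:
\begin{itemize}
\item $\free(p)$ is a pushout of $\sigma_k$ in $\Mod(IC_{n+2})$, hence a trivial cofibration.
\item The identification $\tr\circ\tr\circ\free\cong\tr$ (the square of left adjoints) and the lemma that $\tr$ preserves weak equivalences then show that $p$ is a weak equivalence.
\item Theorem~\ref{Thm9.4} then gives the structure on $\Mod(IC_{n+1})$.
\end{itemize}
This makes the argument non-circular.

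\textbf{The bottom adjunction.} The same identification $\tr\circ\tr\circ\free\cong\tr$ is what the paper's ``$\tr$ preserves cofibrations by construction'' silently uses. Your generators-to-generators check makes it explicit. The paper instead argues that $\tr$ preserves cofibrations and weak equivalences.

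\textbf{Small remarks.}
\begin{itemize}
\item Your early appeal to Proposition~\ref{induc_transfer} ``read backwards'' has the same circularity as the paper's. It is harmless, because you reprove the needed fact directly, and that direction of its proof never uses the hypothesis on $\Mod(IC_{n+1})$.
\item The right-adjoint identity you need is $U\circ D\circ D=D$, not $U\circ D\cong D\circ D$. It holds strictly, because restricting along a composite of sketch maps is the composite of the restrictions.
\item The ``degenerate range'' is not a real obstacle. The images of the generators are generators of the category of $n$-groupoids by definition, whatever they look like.
\end{itemize}
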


\begin{proof}
If the model structure on $\mathpzc{(n+1)Gpd}$ exists, then Proposition \ref{trans_model_struct_and_Quill_adj} combined with Proposition \ref{induc_transfer} shows that all four of the canonical model structures exist, the top horizontal adjunction is a Quillen adjunction, and the two vertical adjunctions are Quillen equivalences. Lemma \ref{path_components_transfer}, Lemma \ref{first_hom_group_transfer}, and Lemma \ref{higher_hom_group_transfer} show that $\tr$ preserves weak equivalences. Moreover, $\tr$ preserves cofibrations by construction. Therefore the bottom adjunction is a Quillen adjunction.
\end{proof}

\section{The Limiting Model Structure}

We now finish with the limiting case. Consider the chain
\[
\cdots\xrightarrow{\tr}\mathpzc{(n+1)Gpd}\xrightarrow{\tr}\ngpd\xrightarrow{\tr}\cdots\xrightarrow{\tr}\mathpzc{1\Gpd}\xrightarrow{\tr}\mathpzc{0Gpd}
\]
of categories. There is a map 
\[
\tr_n:\inftygpd\to\ngpd
\]
for all $n\geq 1$, which is a left adjoint, induced by the chain and limit structure on the underlying theories.
\begin{definition}
We say a map $f:X\to Y$ of $\infty$-groupoids is a weak equivalence if $\tr_n(f)$ is a weak equivalence of $n$-groupoids for all $n\geq 0$.
\end{definition}

\begin{notation}\label{weak_eq_of_inf_gpds}
We write $\zW'$ to denote the collection of all weak equivalences of $\infty$-groupoids.
\end{notation}

\begin{lemma}
The collection of weak equivalences for $\infty$-groupoids presented here, i.e $\zW'$, coincides with the collection of weak equivalences of Definition 4.17 of \cite{Dim1}.
\end{lemma}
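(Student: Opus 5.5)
We compare the two notions of weak equivalence one invariant at a time. Definition 4.17 of \cite{Dim1} says that a map $f:X\to Y$ of $\infty$-groupoids is a weak equivalence if two things hold. First, $\pi_0(f)$ is a bijection. Second, for every $k\geq 1$ and every $0$-cell $x\in X_0$, the map $\pi_k(f):\pi_k(X,x)\to\pi_k(Y,f(x))$ is an isomorphism. Here $\pi_k(X,x)$ is the set of $k$-cells from $Z^{k-1}(x)$ to itself, taken modulo $\sim_k$, the relation given by $(k+1)$-cells. Our notion asks instead that $\tr_n(f)$ be a weak equivalence of $n$-groupoids for every $n$. The key step is therefore a comparison lemma: for every $n\geq 1$, every $0\leq k\leq n$ and every $x\in X_0$, there are isomorphisms
\[
\pi_0(\tr_n X)\cong \pi_0(X),\qquad \pi_k(\tr_n X,x)\cong\pi_k(X,x) \quad (1\leq k\leq n),
\]
natural in $X$ (and in the basepoint). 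Here the right-hand sides are the invariants of \cite{Dim1} and the left-hand sides are those of Section 8.

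To prove the comparison lemma, we describe $\tr_n X$ concretely. Recall that $IC_\infty^{\leq\infty}$ is the limit of the tower of $IC_{n+1}^{\leq n}$, and that $\inftygpd\cong\Mod(IC_\infty^{\leq\infty})$. The left adjoint $\tr_n$ is left Kan extension along the projection $IC_\infty^{\leq\infty}\to IC_{n+1}^{\leq n}$. This projection is $\tr_n$ of Section 6 on $\Theta_0^\op$, collapsing heights above $n$ to $n$, and it identifies parallel $n$-cells that are joined by a lift.

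From this description we expect the following. The underlying globular set of $\tr_n X$ agrees with that of $X$ in dimensions $<n$. In dimension $n$ it is $X_n/\sim_n$, the quotient of $X_n$ by the relation generated by $(n+1)$-cells. We verify this in two steps. First, we compute $\tr_n$ on the free objects $D^k$ using Yoneda. Second, we extend to arbitrary $X$, writing $X$ as a colimit of representables and using that the colimit is computed levelwise in dimensions $\leq n$, since the quotient only affects the top dimension.

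Granting this description, the comparison is immediate. For $k<n$, $\pi_k$ only involves cells of dimension $\leq k+1\leq n$. When $k+1<n$ these cells are unchanged. When $k+1=n$, passing to $\sim_n$-classes of $n$-cells does not change the relation $\sim_k$, which depends only on whether such cells exist. For $k=n$, the Section 8 definition takes $\pi_n(\tr_nX,x)$ to be the set of top cells of $\tr_n X$ with the right boundary. By the description, this is exactly the set of $n$-cells of $X$ modulo $\sim_n$, that is, Dimitri's $\pi_n(X,x)$. The group structures agree because the compositions in $\tr_nX$ are the images of the compositions in $X$.

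With the comparison lemma in hand, the equivalence of the two definitions follows. Every $\pi_k$ appears in $\tr_n$ for any $n\geq k$, so the condition ``$\tr_n(f)\in\zW$ for all $n$'' is the same as ``$\pi_0(f)$ and all $\pi_k(f)$ are bijections or isomorphisms'' in the sense of \cite{Dim1}. Naturality of the isomorphisms in $X$ gives compatibility with $f$.

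The main obstacle is the identification of $(\tr_n X)_n$ with $X_n/\sim_n$. Left Kan extensions need not preserve underlying sets, and the free lifts adjoined in $IC_{n,i}$ might a priori create new cells. To handle this, we would show that the projection $IC_\infty^{\leq\infty}\to IC_{n+1}^{\leq n}$ is a bijection on hom-sets with targets of height $<n$. For the top dimension, we would show it is the coequalizer of source and target at height $n+1$. We plan to prove both facts by induction along the construction of $IC_{n,i}$, using the universal property in Lemma \ref{univ_prop_monad_unit}.
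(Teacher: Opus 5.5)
Your top-level reduction matches the paper's proof. The paper simply says that $\tr_n(f)$ being a weak equivalence for every $n$ means $f$ is bijective on $\pi_0$ and on $\pi_k$ for $k\le n$. In effect it treats $\pi_k(\tr_nX,x)\cong\pi_k(X,x)$ as true by definition. Isolating this as a comparison lemma is the right move, but the argument you sketch for it has a step that fails.

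Colimits in $\inftygpd$ and in $\ngpd$ ($n\geq 1$) are not computed levelwise in any positive dimension. Gluing two $1$-disks end to end along a $0$-cell already produces a composite $1$-cell lying in neither summand. So knowing $\tr_n$ on representables and writing $X$ as a colimit of representables does not let you read off $(\tr_nX)_k$ dimension by dimension. Relatedly, $\tr_n$ agrees with left Kan extension only on representables. On a general model the Kan extension must be followed by reflection into globular-product-preserving functors, so there is no a priori levelwise formula.

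The fix is to bypass colimits and build the candidate directly. Define $Q$ by $Q_k=X_k$ for $k<n$ and $Q_n=X_n/\sim_n$, with operations induced from $X$. Then show that $X\to D_nQ$ is universal among maps $X\to D_nY$. Universality is immediate: such a map sends $(n+1)$-cells to identities, so it factors uniquely through the surjection $X\to Q$.

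The real content is that $Q$ is a well-defined model of $IC_{n+1}^{\le n}$. Operations landing below dimension $n$ must not distinguish $\sim_n$-related $n$-cells. The relation $\sim_n$ must be a congruence for operations landing in dimension $n$. The choice of lift of an $IC_{n+1}^{\le n}$-operation to $IC$ must not matter, which follows the same way as the congruence claim.

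Your planned hom-set bijection for targets of height $<n$ supplies this via one trick:
\begin{itemize}
\item Take inputs $x,x'$ of shape $\vec q$ whose $n$-dimensional entries are joined by $(n+1)$-cells. Assemble those cells into a single input $h$ of shape $\vec q^{\,+}$ (each $n$ raised to $n+1$). Then $x,x'$ are the images of $h$ under the source and target restrictions $\iota_s,\iota_t$.
\item For an operation $\phi$, the composites $\phi\iota_s$ and $\phi\iota_t$ have the same image in $IC_{n+1}^{\le n}$.
\item If $\phi$ lands below dimension $n$, injectivity of the projection on such hom-sets gives $\phi\iota_s=\phi\iota_t$, hence $\phi(x)=\phi(x')$.
\item If $\phi$ lands in dimension $n$, the previous case applied to $s\phi$ and $t\phi$ shows that $(\phi\iota_s,\phi\iota_t)$ is an admissible pair on an object of height $\le n+1$. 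Contractibility of $IC$ then supplies an $(n+1)$-cell from $\phi(x)$ to $\phi(x')$.
\end{itemize}
This replaces your separate top-dimensional coequalizer statement. Once $\tr_nX\cong Q$ is established this way, the rest of your comparison goes through as written.
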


\begin{proof}
Let $f:X\to Y$ be an element of $\zW'$. Suppose $n\geq 1$. Then $\tr_n(f)$ is a weak equivalence of $\infty$-groupoids. By definition, $f$ induces a bijection on path components and homotopy groups up to dimension $n$. Since this holds for all $n\geq 1$, $f$ is a weak equivalence in the sense of Definition 4.17 of \cite{Dim1}.

Suppose $f$ is a weak equivalence in the sense of Definition 4.17 of \cite{Dim1}. Then $\tr_n(f)$ is a weak equivalence merely by definition. Therefore $f\in\zW'$. We conclude that the two collections coincide.
\end{proof}

\begin{definition}
The canonical model structure on $\inftygpd$, if it exists, has
\begin{itemize}
     \item weak equivalences given by the collection $\zW'$ from Notation \ref{weak_eq_of_inf_gpds},
    \item generating cofibrations  given by the set $\zI'$ of Notation \ref{gen cofibrations and triv cofibrations_n_gpd}, and 
    \item and generating trivial cofibrations  given by the set $\zJ'$ of Notation \ref{gen cofibrations and triv cofibrations_n_gpd}.
\end{itemize}
\end{definition}

\begin{theorem}\label{fin_iff_inf}
The  canonical model structure on $\inftygpd$ exists if and only if the canonical model structure exists on $\ngpd$ for all finite $n$.
\end{theorem}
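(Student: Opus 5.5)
The plan is to prove both directions by reducing to the pushout-condition criteria already established. For the finite case these are Theorem \ref{Thm9.4} and Proposition \ref{trans_model_struct_and_Quill_adj}. For the infinite case I will use the analogue of Theorem \ref{Thm9.4} for $\inftygpd$, namely Theorem 5.3.5 of \cite{Hen2}: the canonical model structure on $\inftygpd$ exists provided that for every $\infty$-groupoid $X$ and every $f:D^k\to X$, the pushout map $p:X\to X_+$ along $\sigma_k$ is a weak equivalence. I also use the converse: if the model structure exists, each such $p$ is a pushout of a generating trivial cofibration, hence a trivial cofibration and in particular in $\zW'$. The same converse holds in each $\ngpd$.

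($\Leftarrow$) Assume the canonical model structure exists on $\ngpd$ for every finite $n$. Let $X$ be an $\infty$-groupoid, $f:D^k\to X$, and let $p:X\to X_+$ be the pushout along $\sigma_k$. Each $\tr_n:\inftygpd\to\ngpd$ is a left adjoint, so it preserves the pushout. By construction of $\zJ'$ and $\zJ$ as images under the left adjoints, $\tr_n(\sigma_k)$ is the map $\sigma_k$ of $\ngpd$; when $k\ge n$ it degenerates to an identity, which is harmless. Hence $\tr_n(p)$ is a pushout of a generating trivial cofibration in $\ngpd$, and therefore a weak equivalence. Since this holds for all $n$, $p\in\zW'$ by definition of weak equivalences of $\infty$-groupoids. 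The criterion then gives the model structure on $\inftygpd$.

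($\Rightarrow$) Assume the canonical model structure exists on $\inftygpd$, and fix $n$. By Proposition \ref{trans_model_struct_and_Quill_adj} it suffices to verify the pushout condition in $\ngpd$ and apply Theorem \ref{Thm9.4}, mimicking the second half of that proposition's proof. I will use the right adjoint $D_n:\ngpd\to\inftygpd$ to $\tr_n$, which is assembled from the $D$'s along the chain and satisfies $\tr_nD_n=1$ by the triangle lemma iterated. Given $X\in\ngpd$ and $f:D^k\to X$, transpose to $f':D^k\to D_nX$ via the unit, and form the pushout $p:D_nX\to (D_nX)_+$ along $\sigma_k$ in $\inftygpd$. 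This $p$ is a trivial cofibration, so $\tr_n(p)$ is a weak equivalence of $n$-groupoids. Because $\tr_n$ preserves pushouts and $\tr_nD_n=1$, the map $\tr_n(p)$ is exactly the pushout of $f$ along $\sigma_k$ in $\ngpd$. This verifies the pushout condition.

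The main obstacle is the step taken for granted at the outset: that Theorem 5.3.5 of \cite{Hen2} applies verbatim to $\inftygpd=\Mod(IC)$, including the smallness and generation arguments with $\zI',\zJ'$. A second, smaller point is the compatibility between $\tr_n$ of the infinite-dimensional generators and the generators in $\ngpd$. I would handle that by checking that $\tr_n\circ\free$ along the colimit chain agrees with $\tr$, using the commuting squares of Section 6 and passing to the colimit. I would also confirm that the limit description $\inftygpd\cong\Mod(IC_\infty^{\le\infty})$ yields the adjunction $\tr_n\dashv D_n$.
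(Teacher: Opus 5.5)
Your proposal is correct and follows the paper's proof essentially step for step. In both directions you push the $\sigma_k$-pushout through the left adjoint $\tr_n$, using $D_n$ with $\tr_nD_n=1$ for the forward direction, and then invoke the pushout-condition criterion; you are simply more explicit than the paper about the transposed map $f'$ and about the implicit reliance on Theorem 5.3.5 of \cite{Hen2} for $\inftygpd$.
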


\begin{proof}
    Suppose the canonical model structure exists on $\ngpd$ for all finite $n$. Let $X$ be an $\infty$-groupoid and $k\geq 0$.  Form the following pushout in $\inftygpd$. 
    \[
		\begin{tikzpicture}[node distance=2.5cm]
			\node(A) {$D^k$};
			\node (B) [right of=A]{$X$};
			\node(C)[below of=A]{$D^{k+1}$};
			\node (D)[right of=C]{$X_+$};
			\draw[->](B) to node[black,right=3]{$p$}(D);
			\draw[->](A) to node[above=3] {$f$}(B);
			\draw[->] (A) to node[left=3] {$\sigma_k$}(C);
			\draw[->](C) to node {}(D);
		\end{tikzpicture}
		\]
Notice that as $\tr_n$ is a left adjoint for all $n\geq 0$, the following square is a pushout in $\ngpd$.
  \[
		\begin{tikzpicture}[node distance=2.5cm]
			\node(A) {$D^k$};
			\node (B) [right of=A]{$\tr_n(X)$};
			\node(C)[below of=A]{$D^{k+1}$};
			\node (D)[right of=C]{$\tr_n(X_+)$};
			\draw[->](B) to node[black,right=3]{$\tr_n(p)$}(D);
			\draw[->](A) to node[above=3] {$\tr_n(f)$}(B);
			\draw[->] (A) to node[left=3] {$\sigma_k$}(C);
			\draw[->](C) to node {}(D);
		\end{tikzpicture}
		\]
Since the canonical model structure exists on $\ngpd$, $\tr_n(p)$ is a weak equivalence in $\ngpd$. By definition, $p$ is a weak equivalence in $\inftygpd$.

Suppose that the canonical model structure exists on $\inftygpd$. Let $n$ be a non-negative integer. Let $X$ be an $n$-groupoid and $k\geq 0$. Consider the following pushout in $\inftygpd$, where $D_n$ is the right adjoint to $\tr_n$.
    \[
		\begin{tikzpicture}[node distance=2.5cm]
			\node(A) {$D^k$};
			\node (B) [right of=A]{$D_n(X)$};
			\node(C)[below of=A]{$D^{k+1}$};
			\node (D)[right of=C]{$X_+$};
			\draw[->](B) to node[black,right=3]{$p$}(D);
			\draw[->](A) to node[above=3] {$f$}(B);
			\draw[->] (A) to node[left=3] {$\sigma_k$}(C);
			\draw[->](C) to node {}(D);
		\end{tikzpicture}
		\]
By hypothesis, $p$ is a weak equivalence. There is an induced pushout in $\ngpd$.
    \[
		\begin{tikzpicture}[node distance=2.5cm]
			\node(A) {$D^k$};
			\node (B) [right of=A]{$X$};
			\node(C)[below of=A]{$D^{k+1}$};
			\node (D)[right of=C]{$\tr_n(X_+)$};
			\draw[->](B) to node[black,right=3]{$\tr_n(p)$}(D);
			\draw[->](A) to node[above=3] {$\tr_n(f)$}(B);
			\draw[->] (A) to node[left=3] {$\sigma_k$}(C);
			\draw[->](C) to node {}(D);
		\end{tikzpicture}
		\]
Therefore $\tr_n(p)$ is a weak equivalence in $\ngpd$. We conclude that the canonical model structure exists on $\ngpd$ by using properties of adjunctions.
\end{proof}

\begin{corollary}
Assuming the free pushout condition holds every model over $IC_{n+1}$ for all $n\geq 1$,the following adjunction is a Quillen equivalence of model categories.
\[
\begin{tikzpicture}[node distance=3cm]
	\node (A) {$\Top$};
	\node (B)[right of=A]{$\inftygpd$};
	\draw[transform canvas={yshift=0.3ex},->] (A) to node[above=3] {$\Pi$} (B);
	\draw[transform canvas={yshift=-0.3ex},->,swap](B) to node[below=3] {$|-|$} (A);
\end{tikzpicture}
\]
Moreover, there is an induced equivalence of categories between the homotopy category of $n$-groupoids and the homotopy category of homotopy $n$-types for all $n\geq 0$, including $n=\infty$. 
\end{corollary}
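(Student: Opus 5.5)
The plan is to assemble the earlier results into an induction on $n$, pass to the limit via Theorem \ref{fin_iff_inf}, and then compare with $\Top$ through homotopy groups. The corollary has two parts: the Quillen equivalence $\Pi\dashv|-|$ between $\Top$ and $\inftygpd$, and the equivalences of homotopy categories for $n$-groupoids and homotopy $n$-types.

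\emph{Existence of the model structures.} Lemma \ref{mod_struct_on_1_gpds} gives the canonical model structure on $\mathpzc{1Gpd}$. By Proposition \ref{trans_model_struct_and_Quill_adj} it also exists on $\Mod(IC_2)$. Now assume the canonical model structure exists on $\Mod(IC_{n+1})$. The free pushout condition holds for every model over $IC_{n+1}$ by hypothesis, so Proposition \ref{induc_transfer} gives the canonical model structure on $\Mod(IC_{n+2})$. Proposition \ref{trans_model_struct_and_Quill_adj} then gives it on $\mathpzc{(n+1)Gpd}$, with $\tr\dashv D$ a Quillen equivalence. By induction, the structure exists on $\ngpd$ for every finite $n\geq 1$. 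For $n=0$ we use the model structure on $\Set$, via $\mathpzc{0Gpd}\cong\Set$. Theorem \ref{fin_iff_inf} then yields the canonical model structure on $\inftygpd$.

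\emph{The Quillen equivalence with $\Top$.} Take $\Pi=\Pi_\infty$, the fundamental $\infty$-groupoid, built from the globular object $\Top(D^{(-)},X)$ of Example \ref{example_important}. By contractibility of $IC$, it extends along $\Theta_0^\op\to IC$ to a globular-product preserving functor; the lifts are chosen inductively using contractibility of the disks. Its left adjoint $|-|$ is the nerve-realization construction.
\begin{enumerate}
\item \emph{Quillen adjunction.} We check that $|-|$ sends $\zI'$ and $\zJ'$ to cofibrations and trivial cofibrations. Indeed, $|S^{k-1}\to D^k|$ is the sphere inclusion and $|D^k\to D^{k+1}|$ is a disk inclusion.
\item \emph{$\Pi$ preserves and reflects weak equivalences.} The comparison $\pi_k(\Pi X,x)\cong\pi_k(X,x)$ is the classical computation: cells are maps of disks and homotopies of cells are higher disks. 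The weak equivalences of $\inftygpd$ are detected by all $\pi_k$, by the lemma identifying $\zW'$ with Definition 4.17 of \cite{Dim1}.
\item \emph{Derived unit and counit.} Since every space is fibrant, it suffices that the derived unit $A\to\Pi|A|$ is a weak equivalence for cofibrant $A$. I would prove this by cellular induction on $\zI'$-cell complexes. The unit is an isomorphism on homotopy groups for the generators $D^k$ and $S^{k-1}$. Homotopy pushouts along cofibrations are preserved by $|-|$, and they are preserved by $\Pi$ on the relevant objects. Passage to transfinite composites uses compactness of the disks.
\end{enumerate}

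\emph{Homotopy $n$-types.} Truncate: the homotopy category of $\ngpd$ is equivalent to the full subcategory of $\mathrm{Ho}(\inftygpd)$ on objects with $\pi_k=0$ for $k>n$. This follows from the Quillen equivalences of Proposition \ref{trans_model_struct_and_Quill_adj} and the fact that $D_n$ creates weak equivalences. Under $\mathrm{Ho}(\Top)\simeq\mathrm{Ho}(\inftygpd)$ this subcategory corresponds to the homotopy $n$-types, because the equivalence preserves $\pi_k$. For $n=\infty$ the statement is the Quillen equivalence itself.

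The main obstacle is step 3, the derived unit. The homotopy-invariance of pushouts along $\zI'$ in $\inftygpd$ is exactly where a pushout-type condition is needed. I would try to deduce it from the pushout condition at each finite level, which holds by Theorem \ref{fin_iff_inf} and Proposition \ref{induc_transfer}. I would then transfer it across the limit using the detection of weak equivalences levelwise by $\tr_n$.
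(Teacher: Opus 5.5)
Your construction of the model structures is exactly the paper's: Lemma \ref{mod_struct_on_1_gpds} as base case, Propositions \ref{trans_model_struct_and_Quill_adj} and \ref{induc_transfer} for the induction, and Theorem \ref{fin_iff_inf} for $\inftygpd$. The gap is in the comparison with $\Top$, at your step 3. The cellular induction for the derived unit $A\to\Pi|A|$ needs $\Pi$ to send the topological cell attachment $|A|\cup_{S^{k-1}}D^k$ to a homotopy pushout in $\inftygpd$, and nothing available gives this. $\Pi$ is a right adjoint, so there is no formal reason for it. The statement is a van Kampen/excision theorem for the fundamental $\infty$-groupoid, and given your step 2 and the cellular generation of $\inftygpd$, it is essentially equivalent to the Quillen equivalence you are trying to prove. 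Even your base case for $S^{k-1}$ is an instance of it. It asserts that the free $\infty$-groupoid on the globular $(k-1)$-sphere has all the homotopy groups of the topological sphere, e.g.\ $\pi_3(S^2)\cong\mathbb{Z}$, and that is not a classical computation. Your proposed repair addresses the wrong side of the square. The pushout condition, levelwise or in the limit, only says that $X\to X\cup_{D^k}D^{k+1}$ is a weak equivalence in $\inftygpd$. Hence, once the model structure exists, $A\cup_{S^{k-1}}D^k$ is a homotopy pushout in $\inftygpd$. It says nothing about $\pi_*\Pi(|A|\cup_{S^{k-1}}D^k)$, and no result in the paper computes homotopy groups of a cell attachment of $\infty$-groupoids.

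The paper does not attempt this comparison by hand. Once Theorem \ref{fin_iff_inf} yields the canonical model structure on $\inftygpd$, pushouts of the maps $\sigma_k$ are trivial cofibrations, so Henry's pushout conjecture holds for the coherator $IC$. The paper then invokes Corollary 5.3.13 of \cite{Hen2} as the substantial theorem that this gives the Quillen equivalence with spaces, and Proposition 4.5 of \cite{HenLan} for the $n$-types. To complete your proof you need that external input, or a reproduction of Henry's argument, rather than a naive cellular induction. A smaller point concerns the truncated statement. The adjunction you need there is $\tr_n\dashv D_n$ between $\inftygpd$ and $\ngpd$, not the Quillen equivalences of Proposition \ref{trans_model_struct_and_Quill_adj}, which relate $\Mod(IC_{n+1})$ and $\ngpd$. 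You would also have to check that the unit $Y\to D_n\tr_n Y$ is a weak equivalence for every $Y$ with $\pi_k(Y)=0$ for $k>n$.
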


\begin{proof}
Apply Lemma \ref{mod_struct_on_1_gpds}, Proposition \ref{induc_transfer}, Theorem \ref{fin_iff_inf}, Corollary 5.3.13 of \cite{Hen2}, and Proposition 4.5 of \cite{HenLan}.
\end{proof}

This is to say that the Generalized Homotopy Hypothesis is true.

\bibliography{math}

@misc{Malt,
      title={Grothendieck $\infty$-groupoids, and still another definition of $\infty$-categories}, 
      author={Georges Maltsiniotis},
      year={2010},
      eprint={1009.2331},
      archivePrefix={arXiv},
      primaryClass={math.CT},
  howpublished=" Accessed from \url{https://arxiv.org/abs/1009.2331}", 
}

@InProceedings{Beck_Dist,
author="Beck, Jon",
editor="Eckmann, B.",
title="Distributive laws",
booktitle="Seminar on Triples and Categorical Homology Theory",
year="1969",
publisher="Springer Berlin Heidelberg",
address="Berlin, Heidelberg",
pages="119--140",
isbn="978-3-540-36091-9"
}

@article {Dim1,
    AUTHOR = {Ara, Dimitri},
     TITLE = {On the homotopy theory of {G}rothendieck {$\infty$}-groupoids},
   JOURNAL = {J. Pure Appl. Algebra},
  FJOURNAL = {Journal of Pure and Applied Algebra},
    VOLUME = {217},
      YEAR = {2013},
    NUMBER = {7},
     PAGES = {1237--1278},
      ISSN = {0022-4049,1873-1376},
   MRCLASS = {18G55 (18D05 55P10)},
  MRNUMBER = {3019735},
MRREVIEWER = {Josep\ Elgueta},
       DOI = {10.1016/j.jpaa.2012.10.010},
        howpublished=" Accessed from \url{https://doi.org/10.1016/j.jpaa.2012.10.010}",
}

@article {Garn2,
    AUTHOR = {Garner, Richard},
     TITLE = {A homotopy-theoretic universal property of {L}einster's operad
              for weak {$\omega$}-categories},
   JOURNAL = {Math. Proc. Cambridge Philos. Soc.},
  FJOURNAL = {Mathematical Proceedings of the Cambridge Philosophical
              Society},
    VOLUME = {147},
      YEAR = {2009},
    NUMBER = {3},
     PAGES = {615--628},
      ISSN = {0305-0041,1469-8064},
   MRCLASS = {18G55 (18D50 55U35)},
  MRNUMBER = {2557146},
MRREVIEWER = {Philippe\ Gaucher},
       DOI = {10.1017/S030500410900259X},
        howpublished=" Accessed from \url{https://doi.org/10.1017/S030500410900259X}",
}

@book {Lein,
    AUTHOR = {Leinster, Tom},
     TITLE = {Higher operads, higher categories},
    SERIES = {London Mathematical Society Lecture Note Series},
    VOLUME = {298},
 PUBLISHER = {Cambridge University Press, Cambridge},
      YEAR = {2004},
     PAGES = {xiv+433},
      ISBN = {0-521-53215-9},
   MRCLASS = {18D50 (55P48)},
  MRNUMBER = {2094071},
MRREVIEWER = {R.\ H.\ Street},
       DOI = {10.1017/CBO9780511525896},
        howpublished=" Accessed from \url{https://doi.org/10.1017/CBO9780511525896}",
}

@article {EC1,
    AUTHOR = {Cheng, Eugenia},
     TITLE = {Iterated distributive laws},
   JOURNAL = {Math. Proc. Cambridge Philos. Soc.},
  FJOURNAL = {Mathematical Proceedings of the Cambridge Philosophical
              Society},
    VOLUME = {150},
      YEAR = {2011},
    NUMBER = {3},
     PAGES = {459--487},
      ISSN = {0305-0041,1469-8064},
   MRCLASS = {18C20 (18D05)},
  MRNUMBER = {2784770},
MRREVIEWER = {Enrico\ Vitale},
       DOI = {10.1017/S0305004110000599},
        howpublished=" Accessed from \url{https://doi.org/10.1017/S0305004110000599}",
}

@article {Geo2,
    AUTHOR = {Maltsiniotis, Georges},
     TITLE = {La th\'eorie de l'homotopie de {G}rothendieck},
   JOURNAL = {Ast\'erisque},
  FJOURNAL = {Ast\'erisque},
    VOLUME = {301},
      YEAR = {2005},
     PAGES = {vi+140},
      ISSN = {0303-1179,2492-5926},
   MRCLASS = {18G55 (14F35 18G10 55U10 55U35)},
  MRNUMBER = {2200690},
MRREVIEWER = {J\v ir\'i\ Rosick\'y},
}

@misc {BarrWells,
    AUTHOR = {Barr, Michael and Wells, Charles},
     TITLE = {Toposes, triples and theories},
      NOTE = {Corrected reprint of the 1985 original [MR0771116]},
   JOURNAL = {Repr. Theory Appl. Categ.},
  FJOURNAL = {Reprints in Theory and Applications of Categories},
    NUMBER = {12},
      YEAR = {2005},
     PAGES = {x+288},
   MRCLASS = {18-02 (03G30 18B25 18C10 18C15)},
  MRNUMBER = {2178101},
}

@article {Bo2,
    AUTHOR = {Bourke, John},
     TITLE = {Note on the construction of globular weak omega-groupoids from
              types, topological spaces{$\ldots$}},
   JOURNAL = {Cah. Topol. G\'eom. Diff\'er. Cat\'eg.},
  FJOURNAL = {Cahiers de Topologie et G\'eom\'etrie Diff\'erentielle
              Cat\'egoriques},
    VOLUME = {57},
      YEAR = {2016},
    NUMBER = {4},
     PAGES = {281--294},
      ISSN = {1245-530X,2681-2363},
   MRCLASS = {18D05},
  MRNUMBER = {3616553},
MRREVIEWER = {Philippe\ Gaucher},
}

@article {HenLan,
    AUTHOR = {Henry, Simon and Lanari, Edoardo},
     TITLE = {On the homotopy hypothesis for 3-groupoids},
   JOURNAL = {Theory Appl. Categ.},
  FJOURNAL = {Theory and Applications of Categories},
    VOLUME = {39},
      YEAR = {2023},
     PAGES = {Paper No. 26, 735--768},
      ISSN = {1201-561X},
   MRCLASS = {18N20 (18M90 18N40 55U35)},
}

@misc{lanari2018semimodelstructuregrothendieckweak,
      title={A semi-model structure for Grothendieck weak 3-groupoids}, 
      author={Edoardo Lanari},
      year={2018},
      eprint={1809.07923},
      archivePrefix={arXiv},
      primaryClass={math.CT},
        howpublished=" Accessed from \url{https://arxiv.org/abs/1809.07923}", 
}

@misc{Hen2,
      title={Algebraic models of homotopy types and the homotopy hypothesis}, 
      author={Simon Henry},
      year={2016},
      eprint={1609.04622},
      archivePrefix={arXiv},
      primaryClass={math.CT},
  howpublished=" Accessed from \url{https://arxiv.org/abs/1609.04622}", 
}

@article{lane2,
title = {Towards a globular path object for weak {$\infty$}-groupoids},
journal = {Journal of Pure and Applied Algebra},
volume = {224},
number = {2},
pages = {630-702},
year = {2020},
issn = {0022-4049},
doi = {https://doi.org/10.1016/j.jpaa.2019.06.004},
 howpublished=" Accessed from \url{https://www.sciencedirect.com/science/article/pii/S0022404919301501}",
author = {Edoardo Lanari}
}
\bibliographystyle{amsplain}
\end{document}